\newcommand{\R}{\mathbb{R}}
\newcommand{\N}{\mathbb{N}}
\newcommand{\Z}{\mathbb{Z}}
\newcommand{\conv}{\operatorname{conv}}
\newcommand{\supp}{\operatorname{supp}}
\newcommand{\st}{\text{s.t.}}
\newcommand{\knap}{K_{\mbox{\scriptsize \sc knap}}}
\newcommand{\C}{\mathscr{C}}
\newcommand{\dominates}{\triangleright}
\newcommand{\cA}{\mathcal{A}}
\renewcommand{\S}{\mathscr{S}}
\newtheorem{observation}{Observation}
\newenvironment{cpf}
{\begin{trivlist} \item[] {\em Proof of claim }}
{$\hfill\diamond$ \end{trivlist}}
\begin{document}

\title{Multi-cover Inequalities for Totally-Ordered Multiple Knapsack Sets: Theory and Computation
\thanks{A.~Del Pia is partially funded by ONR grant N00014-19-1-2322. Any opinions, findings, and conclusions or recommendations expressed in this material are those of the authors and do not necessarily reflect the views of the Office of Naval Research.}%\thanks{Grants or other notes
%about the article that should go on the front page should be
%placed here. General acknowledgments should be placed at the end of the article.}
}
%\subtitle{Do you have a subtitle?\\ If so, write it here}

%\titlerunning{Short form of title}        % if too long for running head

\author{Alberto Del Pia \and Jeff Linderoth \and Haoran Zhu
}

\authorrunning{A. Del Pia et al.} % if too long for running head

\institute{Haoran Zhu \at
              Department of Industrial and Systems Engineering, University of Wisconsin-Madison, \\
              Madison, WI, 53705, USA\\
              hzhu94@wisc.edu           %  \\
}

\institute{Alberto Del Pia \at
              Department of Industrial and Systems Engineering, University of Wisconsin-Madison, \\
              Wisconsin Institute for Discovery,\\
              Madison, WI, 53705, USA\\
              delpia@wisc.edu
           \and
           Jeff Linderoth \at 
           Department of Industrial and Systems Engineering, University of Wisconsin-Madison, \\
              Wisconsin Institute for Discovery,\\
              Madison, WI, 53705, USA\\
              linderoth@wisc.edu
              \and
           Haoran Zhu \at
              Department of Industrial and Systems Engineering, University of Wisconsin-Madison, \\
              Madison, WI, 53705, USA\\
              hzhu94@wisc.edu 
}

\date{}
% The correct dates will be entered by the editor

\maketitle

\begin{abstract}
We propose a method to generate cutting-planes from multiple covers of knapsack constraints.  The covers may come from different knapsack inequalities if the weights in the inequalities form a totally-ordered set.  Thus, we introduce and study the structure of a totally-ordered multiple knapsack set.  The valid \emph{multi-cover inequalities} we derive for its convex hull have a number of interesting properties. First, they generalize the well-known $(1,k)$-configuration inequalities. Second, they are not aggregation cuts.  Third, they cannot be generated as a rank-1 Chv{\'a}tal-Gomory cut from the inequality system consisting of the knapsack constraints and all their minimal cover inequalities.  We also provide conditions under which the inequalities are facets for the convex hull of the totally-ordered knapsack set, as well as conditions for those inequalities to fully characterize its convex hull.  We give an integer program to solve the separation and provide numerical experiments that showcase the strength of these new inequalities.

\keywords{Multiple knapsack problem \and Cutting-planes \and Knapsack polytope}

\end{abstract}

\section{Introduction}
\label{sec:intro}

In this paper we study cutting-planes related to covers of 0-1 knapsack sets.  
A \emph{0-1 knapsack set} is a set of the form 
\[ \knap := \{x \in \{0,1\}^n \mid a^T x \leq b\}, \] 
with $(a, b) \in \Z^{n+1}_+$, and a \emph{cover} is a subset $C \subseteq \{1, \ldots, n\}$ such that $\sum_{j \in C} a_j > b$.  
The associated \emph{cover inequality (CI)}
\[ \sum_{j \in C} x_j \leq |C| - 1 \]
is valid for the \emph{knapsack polytope} $\conv(\knap)$ and is not satisfied by the incidence vector of $C$.
There is a long and rich literature on (lifted) cover inequalities for the knapsack polytope
\cite{balas1975facets,hammer1975facet,wolsey1976facets,gu1998lifted,letchford2019lifted}, and the reader is directed to the recent survey \cite{hojny2019knapsack} for a thorough introduction.

In this paper we consider the \emph{0-1 multiple knapsack set}
\begin{align}
K = \{x \in \{0,1\}^n \mid Ax \leq b\}, \label{eq:tomks}
\end{align}
where $[A, b] \in \Z^{m \times (n+1)}_+$.
A standard and computationally useful way for generating  valid inequalities 
%to improve the linear programming relaxation of $K$ 
for $\conv(K)$
is to generate \emph{lifted cover inequalities (LCIs)} for the knapsack sets defined by the individual constraints of $K$
\cite{crowder1983solving}.  In this way, the extensive literature regarding valid inequalities for $\conv(\knap)$ can be leveraged  to solve integer programs whose feasible region is $K$.
In contrast to $\conv(\knap)$, very little is known about 
%polyhedra that arise as the convex hull of \emph{multiple knapsack sets}.
the polyhedral structure of $\conv(K)$.
In this paper, we introduce a family of cutting-planes, called \emph{multi-cover inequalities (MCIs)}, that are derived by simultaneously considering multiple covers that satisfy a certain condition.
The covers may come from \emph{any} inequality in the formulation, as long as the weights appearing in the knapsack inequalities are totally-ordered. Moreover, when only a single cover is given, the associated MCI reduces to a CI.  

More formally, we present a new approach to generate valid inequalities for a special multiple knapsack set, called the \emph{totally-ordered multiple knapsack set (TOMKS)}.
The multiple knapsack set $K$ in \eqref{eq:tomks} is called a TOMKS if the column vectors $\{A_{\cdot 1}, A_{\cdot 2}, \ldots , A_{\cdot n}\}$ of the constraint matrix $A \in \Z^{m \times n}_+$ form a chain ordered by component-wise order. Without loss of generality we may assume $A_{\cdot 1} \geq  A_{\cdot 2} \geq \ldots \geq A_{\cdot n}$.

The TOMKS can arise in the context of chance-constrained programming.  Specifically, consider a knapsack constraint where the weights of the items $(a)$ depend on a random variable $(\xi$), and we wish to satisfy the chance constraint
\begin{equation}
  \mathbb{P} \{ a(\xi)^T x \leq \beta \} \geq 1-\epsilon, \label{eq:chance}
\end{equation}
selecting a subset of items ($x \in \{0,1\}^n$) so that the likelihood that these items fit into the knapsack is sufficiently high.
In the scenario approximation approach proposed in \cite{MR2232597,nemirovski.shapiro:05}, an independent Monte Carlo sample of $N$ realizations of the weights $(a(\xi^1), \ldots a(\xi^N))$ is drawn and the deterministic constraints
\begin{equation}
  a(\xi^i)^T x \leq \beta \quad \forall i=1\ldots,N \label{eq:saa} 
\end{equation} 
are enforced.   
In \cite{MR2425035} it is shown that for every $\delta \in (0,1)$, if the sample size $N$ is sufficiently large:
\[ N \geq \frac{1}{2 \epsilon^2} \left( \log \left( \frac{1}{\delta} \right) + n \log(2) \right), \]
then any feasible solution to \eqref{eq:saa} satisfies the constraint \eqref{eq:chance} with probability at least $1-\delta$.
If the random weights of the items $a_1(\xi), a_2(\xi), \ldots a_n(\xi)$ are independently distributed with means $\mu_1 \geq \mu_2 \ldots \geq \mu_n$, then the feasible region in~\eqref{eq:saa} may either be a TOMKS, or the constraints can be (slightly) relaxed to obey the ordering property.

The TOMKS may arise in more general situations as well.  For a general binary set, if two knapsack inequalities $a_1^T x \leq b_1$ and $a_2^T x \leq b_2$ have non-zero coefficients in few common variables, their intersection may be totally-ordered, and the MCI would be applicable in this case.  In the general case, if variables are fixed to zero or one, the induced face may be a TOMKS, and the resulting inequalities could then be lifted to be valid for the original set. 
In the special case where the multiple covers come from the same knapsack set, the MCI can also produce interesting inequalities. 
For example, the well-known $(1,k)$-\emph{configuration inequalities} for $\conv(\knap)$ \cite{padberg19801} are a special case of MCI where two covers come from the same inequality and have particular structure (see Proposition~\ref{prop: 1-k-AMCI}). 
We also give an example where a facet of $\conv(\knap)$ generated by a recent lifting procedure described by Letchford and Souli \cite{letchford2019lifted} is a MCI.

Interestingly, as observed by Laurent and Sassano \cite{laurent1992characterization}, the convex hull of a TOMKS can be exactly characterized by all the associated minimal CIs if and only if the set of minimal covers has no minor isomorphic to $J_q = \{\{2,\ldots, q\}, \{1,i\} \text{ for }  i = 2, \ldots, q\}$ with $q \geq 3$, where the definition of \emph{minor} can be found in Sect.~\ref{sec:facet}. 
When the set of minimal covers does have a minor isomorphic to $J_q$, our new inequalities are important. In particular, when the minimal cover set is exactly $J_q, \conv(K)$ can be fully described by bound constraints, CIs, and a single MCI. 

MCIs are generated by a simple algorithm (Algorithm~\ref{alg:mci}) that takes as input a special family of covers $\C = \{C_1, C_2, \ldots C_k\}$ that obeys a certain maximality criterion (defined in Definition~\ref{def:multi-cover}).  For many types of cover-families $\C$, the MCI may be given in closed-form.
%In the case that the family of covers $\C$ is an antichain in a certain partial order, the resulting MCI has the interesting property that it simultaneously cuts off at least two of the incidence vectors of the covers in $\C$.  
We also give a condition under which an MCI defines a facet of $\conv(K)$ in Section~\ref{sec:facet}, as well as a condition for the MCIs to fully describe $\conv(K)$. 

MCIs may be generated by simultaneously considering multiple knapsack inequalities defining $K$.
Another mechanism to generate inequalities taking into account information from multiple constraints of the formulation is to aggregate inequalities together, forming the set
\[ \cA(A,b) := \bigcap_{\lambda \in \R^m_+} \conv(\{x \in \{0,1\}^n \ \mid \lambda^T A x \leq \lambda^T b \}. \]
Inequalities valid for $\cA(A,b)$ are known as \emph{aggregation cuts}, and have been shown to be quite powerful from both an empirical \cite{fukasawa.goycoolea:11} and theoretical \cite{MR3844541} viewpoint.  
The well-known Chv{\'a}tal-Gomory (CG) cuts, lifted cover inequalities, and weight inequalities \cite{weismantel19970} are all aggregation cuts.  In Example~\ref{ex3}, we show that MCIs are not aggregation cuts.  
Furthermore, in Example~\ref{ex4}, we show that MCIs cannot be obtained as a (rank-1)  Chv{\'a}tal-Gomory cut from the linear system consisting of all minimal cover inequalities from $K$.

The paper is structured as follows: In Section~\ref{sec:dominance}, we define a certain type of dominance relationship between covers that is necessary to introduce the MCIs.  
The MCIs are defined in Section~\ref{sec:MCI}, where we also provide some examples to demonstrate that MCIs are not dominated by other well-known families of cutting-planes.  
Utilizing a combinatorial property of the multi-cover, in Section~\ref{subsec: extended_MCI} we introduce a strengthening procedure for MCI.
In Section~\ref{sec:facet} we provide a sufficient condition for the MCI to be facet-defining for $\conv(K)$, and we show a family of instances where the MCI inequalities are the only non-trivial inequalities besides cover inequalities for $\conv(K)$. 
Lastly in Section~\ref{sec: separation} and Section~\ref{sec: numerical} we discuss the separation problem for MCIs, and we present numerical experiments showing the additional integrality gap that can be closed by MCIs when compared to CIs.

Next, we detail the differences between this paper and the preliminary IPCO version \cite{del2021multi}. 
First, the MCI defined in \cite{del2021multi} is referred to as the \emph{simple-MCI} in this paper, while the MCI defined in this paper refers to a larger, more general, class of inequalities.  This family contains both the \emph{simple-MCI} and the antichain multi-cover inequalities \emph{AMCI} in \cite{del2021multi} as special cases. 
For that reason, we remove the \emph{AMCI} section in \cite{del2021multi} from this paper.
Moreover, in Section~\ref{sec:MCI} we introduce an analogous concept of \emph{extended cover inequality (ECI)} for MCI, namely the \emph{extended MCI}, and in Section~\ref{sec:facet} we provide a special scenario for MCIs to fully describe $\conv(K)$.  Lastly, Section~\ref{sec: separation} and Section~\ref{sec: numerical} have completely new results about separation and numerical experiments.

\paragraph{Notation.}
For a positive integer $n$, we denote by $[n] := \{1,\dots,n\}$.
The \emph{incidence vector} of a set $S \subseteq [n]$ is denoted by $\chi^S$.
Therefore, given a TOMKS $K$, we say that a set $S \subseteq [n]$ is a \emph{cover} for $K$ if $\chi^S \notin K$. 
For a vector $x \in \R^n$ and a set $S \subseteq [n]$, we define $x(S) := \sum_{i \in S} x_i$. 
This in particular means $x(\emptyset) = 0.$ 
% The \emph{complement} of a set $S \subseteq [n]$ is $\bar S := [n] \setminus S$.
We denote the \emph{power set} of a set $S$, the set of all subsets of $S$, as $2^S$.
Lastly, we denote $e^j$ to be the $j$-th unit vector of the ambient space.

\section{A Dominance Relation}
\label{sec:dominance}

In this section we define a dominance relation between covers and show some of its important properties.

\begin{definition}[Dominance]
\label{def:domination}
For $S_1, S_2 \subseteq [n]$, we say that $S_1$ \emph{dominates} $S_2$ and write $S_1 \dominates S_2$, if there exists an injective function $f: S_2 \to S_1$ with $f(i) \leq i \ \forall i \in S_2$. %, such that $f(S_2) \subseteq S_1$.
\end{definition}

The dominance relation in Definition~\ref{def:domination} is reflexive, antisymmetric, and transitive, so $(2^{[n]}, \dominates)$ forms a partially ordered set. 
For two sets $S_1, S_2 \subseteq [n]$, we say $S_1$ and $S_2$ are \emph{comparable} if $S_1 \dominates S_2$ or $S_2 \dominates S_1$.

The dominance relation has a natural use in the context of covers.
In fact, if $C_2$ is a cover for a TOMKS $K$ and $C_1$ dominates $C_2$, then $C_1$ is also a cover for $K$. 
%Next, we prove two technical lemmas that will be used later.
Next, we present two technical lemmas about dominance.

\begin{lemma}
\label{lem: domination_minus_intersection}
Let $S_1, S_2 \subseteq [n]$ with $S_1 \neq S_2$.
Then for any $S' \subseteq S_1 \cap S_2$, $S_1 \dominates S_2$ if and only if $S_1 \setminus S' \dominates S_2 \setminus S'$.
\end{lemma}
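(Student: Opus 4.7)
The plan is to prove the two implications separately. The reverse direction is nearly immediate, while the forward direction requires an iterative chain (augmenting-path-style) argument, which I expect to be the only real obstacle.

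For the backward direction, I would assume an injective $g:S_2\setminus S'\to S_1\setminus S'$ with $g(i)\le i$, and simply extend it by the identity on $S'$. That is, define $f:S_2\to S_1$ by $f(i)=g(i)$ for $i\in S_2\setminus S'$ and $f(i)=i$ for $i\in S'$ (legal since $S'\subseteq S_1\cap S_2$). The inequality $f(i)\le i$ is trivial. For injectivity, note that the two pieces of $f$ map into the disjoint sets $S_1\setminus S'$ and $S'$, so the injectivity of $g$ passes through immediately.

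For the forward direction, suppose $f:S_2\to S_1$ is injective with $f(i)\le i$ for all $i\in S_2$. For each $i\in S_2\setminus S'$, I build a chain $i_0=i,\; i_{k+1}=f(i_k)$, continuing as long as $i_k\in S'$ (which ensures $i_k\in S_2$ and keeps $f$ applicable). I would then argue by induction that the chain is strictly decreasing: first $i_0\notin S'$ while $i_1\in S'$, giving $i_1<i_0$; and inductively, if $i_{k+1}=i_k$ for some $k\ge 1$, then $f(i_{k-1})=i_k=f(i_k)$ together with injectivity of $f$ would force $i_{k-1}=i_k$, contradicting the previously established strict decrease. Since the chain lives in the finite set $[n]$, it must terminate at some $i_K\in S_1\setminus S'$ with $K\ge 1$, and I would set $g(i):=i_K$. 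This $g$ maps into $S_1\setminus S'$ and satisfies $g(i)=i_K\le i_0=i$.

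The main obstacle is verifying that $g$ is injective. I would argue as follows: if $g(i)=g(i')$ for distinct $i,i'\in S_2\setminus S'$, then the two chains have the same terminal element. Working backwards and repeatedly using injectivity of $f$, the two chains agree step by step. If they had different lengths, say the chain from $i$ is shorter, then $i$ would equal some intermediate element of the chain from $i'$, which lies in $S'$, contradicting $i\in S_2\setminus S'$. Hence the chains have the same length and $i=i'$. Combining both directions yields the claim.
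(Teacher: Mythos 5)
Your proof is correct and follows essentially the same route as the paper's: the backward direction extends $g$ by the identity on $S'$, and the forward direction iterates $f$ until the orbit escapes the common part, with injectivity recovered by backtracking along the chains using injectivity of $f$. The only (harmless) difference is that you stop each chain as soon as it leaves $S'$ and so treat a general $S'\subseteq S_1\cap S_2$ in one pass, whereas the paper first proves the case $S'=S_1\cap S_2$ and then reduces the general case to it.
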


\begin{proof}
Let $S_0 := S_1 \cap S_2$. First, we show that for any $S' \subseteq S_0$, $S_1 \setminus S' \dominates S_2 \setminus S'$ implies that $S_1 \dominates S_2$. 
From Definition~\ref{def:domination}, there exists an injective function $g: S_2 \setminus S' \to S_1 \setminus S'$ such that $g(i) \leq i \ \forall i \in S_2 \setminus S'$. Now we define the function $f$ as follows: for any $i \in S_2$, if $i \in S'$, then $f(i): = i$; if $i \in S_2 \setminus S'$, then $f(i): = g(i)$. So $f$ is a function maps elements from $S_2$ to $S_1$. Apparently this function $f$ is an injective function, with $f(i) \leq i \ \forall i \in S_2$, therefore we have proven that $S_1 \dominates S_2$. 

Next we show that $S_1 \dominates S_2$ implies $S_1 \setminus S_0 \dominates S_2 \setminus S_0$. From Definition~\ref{def:domination}, there exists an injective function $g : S_2 \to S_1$ such that $g(i) \leq i \ \forall i \in S_2$. %, and $g(S_2) \subseteq S_1$. 
We define the function $f$ as follows: for any $i \in S_2 \setminus S_0$, $f(i) := g^{t(i)}(i)$ where $t(i)$ is the smallest integer number $t \ge 1$ such that $g^t(i) \in S_1 \setminus S_0$, and $g^t = g \circ g \circ \ldots \circ g$ is the $t$th functional power of $g$. 
Since $g$ is an injective function from $S_2$ to $S_1$, we know for any $i \in S_2 \setminus S_0$, there must exist $t \in \N$ such that $g^t(i) \in S_1 \setminus S_0$. 
Hence the function $f$ is well defined and it is from $S_2 \setminus S_0$ to $S_1 \setminus S_0$. It remains to show that $f$ is an injective function and that $f(i) \leq i \ \forall i \in S_2 \setminus S_0$. 
The fact that $f(i) \leq i \ \forall i \in S_2 \setminus S_0$ follows directly from the property $g(i) \leq i \ \forall i \in S_2$ and the definition of $f$. % $f(i) = g^t(i)$ for some $t$ such that $g^t(i) \in S_1 \setminus S_0$. 
Finally, we show that $f$ is injective.
Assume, for a contradiction, that $f$ is not injective.
Then there exists $i, j \in S_2 \setminus S_0$ with $i \neq j$ such that $f(i) = f(j)$, i.e., $g^{t(i)}(i) = g^{t(j)}(j)$. 
By eventually switching $i$ and $j$, we can assume without loss of generality that $t(i) \leq t(j)$. 
Since $g$ is injective, we know that $i = g^{t(j) - t(i)} (j)$. 
If $t(j) - t(i) = 0$, then $i = j$ which gives us a contradiction. 
Thus we now assume $t(j) - t(i) \geq 1.$ 
In this case, the fact that $g^{t(j) - t(i)} (j)$ is in $S_2 \setminus S_0$ contradicts the fact that the codomain of $g$ is $S_1$.
We have thereby shown that $f$ is injective, and this concludes the proof of $S_1 \setminus S_0 \dominates S_2 \setminus S_0$.

Lastly, we want to show that, for any $S' \subseteq S_0$, $S_1 \dominates S_2$ implies that $S_1 \setminus S' \dominates S_2 \dominates S'$. If $S' = S_0$, then from above, we have already shown that $S_1 \setminus S_0 \dominates S_2 \setminus S_0$. If $S' \subsetneq S_0,$ let $S'' = S_0 \setminus S'$, then $S_1 \setminus S' = (S_1 \setminus S_0) \cup S'', S_2 \setminus S' = (S_2 \setminus S_0) \cup S''$. 
From $S_1 \setminus S_0 \dominates S_2 \setminus S_0$, we can easily know that $(S_1 \setminus S_0) \cup S'' \dominates (S_2 \setminus S_0) \cup S''$, which is just $S_1 \setminus S' \dominates S_2 \setminus S'$. 
\qed \end{proof}

\begin{lemma}
\label{lem: domination_dual}
Let $S \subseteq [n]$ and let $S_1 ,S_2 \subseteq S$.
Then, $S_1\dominates S_2$ if and only if $S \setminus S_2 \dominates S \setminus S_1$. 
\end{lemma}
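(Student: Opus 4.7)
My plan is to derive both sides of the biconditional from a common equivalent statement, using Lemma~\ref{lem: domination_minus_intersection}. If $S_1 = S_2$ both sides hold automatically, so I will assume $S_1 \neq S_2$, which (since both live inside $S$) also forces $S \setminus S_1 \neq S \setminus S_2$, so that Lemma~\ref{lem: domination_minus_intersection} is applicable in each direction.

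The first step is to apply Lemma~\ref{lem: domination_minus_intersection} to the pair $(S_1, S_2)$ with $S' := S_1 \cap S_2$, which yields
\[ S_1 \dominates S_2 \iff S_1 \setminus S_2 \dominates S_2 \setminus S_1. \]
The second step is to apply the same lemma to the pair $(S \setminus S_2,\, S \setminus S_1)$ with $S' := (S \setminus S_2) \cap (S \setminus S_1) = S \setminus (S_1 \cup S_2)$. Using $S_1, S_2 \subseteq S$, a direct set computation gives $(S \setminus S_2) \setminus S' = S_1 \setminus S_2$ and $(S \setminus S_1) \setminus S' = S_2 \setminus S_1$, hence
\[ S \setminus S_2 \dominates S \setminus S_1 \iff S_1 \setminus S_2 \dominates S_2 \setminus S_1. \]
Chaining these two equivalences will complete the proof.

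The only point that needs any care is the set-algebra identity $(S \setminus S_2) \setminus (S \setminus (S_1 \cup S_2)) = S_1 \setminus S_2$ (and its symmetric counterpart), which is routine once one uses $S_1 \subseteq S$. I do not anticipate a genuine obstacle: the reduction exposes that both sides of the biconditional depend only on the symmetric difference of $S_1$ and $S_2$, so they must coincide. An alternative, more hands-on route would be to take any witness $f$ for $S_1 \setminus S_2 \dominates S_2 \setminus S_1$ and extend it by the identity on $S \setminus (S_1 \cup S_2)$ to produce the desired injection on the complements, but Lemma~\ref{lem: domination_minus_intersection} already packages precisely this argument in both directions.
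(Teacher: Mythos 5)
Your proof is correct, and it takes a genuinely different route from the paper's. The paper proves the forward implication directly: starting from a witness $f$ for $S_1 \dominates S_2$, it sets $\tilde S_1 := f(S_2)$, lists $S \setminus \tilde S_1$ and $S \setminus S_2$ in increasing order, and shows by a counting argument that the $h$-th smallest element of $S \setminus S_2$ is at most the $h$-th smallest element of $S \setminus \tilde S_1$; the reverse implication is then obtained by applying the forward one to the complements. You instead invoke Lemma~\ref{lem: domination_minus_intersection} twice (with $S' = S_1 \cap S_2$ and with $S' = (S\setminus S_1)\cap(S\setminus S_2) = S \setminus (S_1 \cup S_2)$) to reduce both sides of the biconditional to the single statement $S_1 \setminus S_2 \dominates S_2 \setminus S_1$; your set-algebra identities check out (using $S_1, S_2 \subseteq S$), your handling of the $S_1 = S_2$ degenerate case is needed and present, and there is no circularity since Lemma~\ref{lem: domination_minus_intersection} is proved independently. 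What your approach buys is brevity and a conceptual point the paper does not make explicit — that dominance among subsets of $S$ and among their complements both depend only on the restriction to the symmetric difference — at the cost of offloading all the real work onto the functional-power argument inside Lemma~\ref{lem: domination_minus_intersection}. The paper's proof is self-contained and yields as a byproduct the order-statistics characterization ($j_h \leq i_h$ for all $h$), which is a reusable way to certify dominance.
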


\begin{proof}
It suffices to prove the implication from left to right, i.e., that $S_1\dominates S_2$ implies $S \setminus S_2 \dominates S \setminus S_1$.
In fact, once this implication is proven, it is simple to observe that the reverse implication holds as well. 
To see this, assume $S \setminus S_2 \dominates S \setminus S_1$.
Using the implication from left to right, we then obtain $S \setminus (S \setminus S_1) \dominates S \setminus (S \setminus S_2)$, which can be rewritten as $S_1 \dominates S_2$.
Hence, in the remainder of the proof we show the implication from left to right.

%First, we observe that $S \setminus (S \setminus S_1) = S_1$. 

%From $S \setminus (S \setminus S_1) = S_1$, it suffices for us to show that $S_1\dominates S_2$ implies $S \setminus S_2 \dominates S \setminus S_1$: If this is true, then we also have $S \setminus S_2 \dominates S \setminus S_1$ implies that $S \setminus (S \setminus S_1) \dominates S \setminus (S \setminus S_2)$, which is simply saying $S_1 \dominates S_2$.

From Definition~\ref{def:domination}, we know that there exists an injective function $f: S_2 \to S_1$ with $f(i) \leq i \ \forall i \in S_2$.
Define $\tilde S_1 := f(S_2)$.
To complete the proof, it suffices to show that $S \setminus S_2 \dominates S \setminus \tilde S_1$.
In fact, since $\tilde S_1 \subseteq S_1$, then $S \setminus \tilde S_1 \supseteq S \setminus S_1$, thus using the identity function we obtain $S \setminus \tilde S_1 \dominates S \setminus S_1$.
Since the domination relation is transitive, we then obtain $S \setminus S_2 \dominates S \setminus \tilde S_1 \dominates S \setminus S_1$, as desired.
Hence we now show that that $S \setminus S_2 \dominates S \setminus \tilde S_1$.

Note that $|\tilde S_1| = |S_2|$, thus also $|S \setminus \tilde S_1| = |S \setminus S_2|$, and we denote the latter cardinality by $t$.
Let $S \setminus \tilde S_1 = \{i_1, \ldots, i_t\}$ and $S \setminus S_2 = \{j_1, \ldots, j_t\}$, where $i_1 < \ldots < i_t$ and $j_1 < \ldots < j_t$. 
It suffices to show that $j_h \leq i_h \ \forall h \in [t]$.
In fact, then we can consider the injective function $g: S \setminus \tilde S_1 \to S \setminus S_2$ defined by $g(i_h) := j_h$, and obtain that $S \setminus S_2 \dominates S \setminus \tilde S_1$.
Thus, in the remainder of the proof we show that $j_h \leq i_h \ \forall h \in [t]$.
We prove this statement by contradiction, thus we suppose that there exists at least one index $h$ such that $j_h > i_h$, and we define $h^*$ to be the minimum such index, i.e., $h^*: = \min\{h \mid j_h > i_h\}$.

We now show that $\{j \in S \setminus S_2 \mid j \leq i_{h^*}\} = \{j_1, \ldots, j_{h^* - 1}\}$.
The containment $\subseteq$ holds because, by definition of $h^*$, we have $j_{h^*} > i_{h^*}$.
To prove the containment $\supseteq$, we just need to show that $j_{h^* - 1} \le i_{h^*}$.
If not, we have $j_{h^* - 1} > i_{h^*}$, thus $j_{h^* - 1} > i_{h^*} > i_{h^* - 1}$, which contradicts the choice of $h^*$.
%\todo{Why does the containment $\subseteq$ hold? Actually I think it is the only containment that you need for the reasoning below. So the other one can be removed.}

%To prove the containment $\supseteq$, we just need to show that $j_{h^* - 1} \le i_{h^*}$.
%Otherwise, we have $j_{h^* - 1} > i_{h^*}$, then $j_{h^* - 1} > i_{h^*} > i_{h^* - 1}$, which contradicts the choice of $h^*$.

%We have thereby shown that $\{j \in S \setminus S_2 \mid j \leq i_{h^*}\} = \{j_1, \ldots, j_{h^* - 1}\}$.

Now we consider the sets $T_1: = \{j \in \tilde S_1 \mid j \leq i_{h^*}\}$ and $T_2: = \{j \in S_2 \mid j \leq i_{h^*}\}$. 
Since $\tilde S_1,S_2 \subseteq S$, we have $\tilde S_1 = S \setminus (S \setminus \tilde S_1)$ and $S_2 = S \setminus (S \setminus S_2)$.
We obtain 
\begin{align*}
|T_1| & = |\{j \in S \mid j \leq i_{h^*}\}| - |\{j \in S \setminus \tilde S_1 \mid j \leq i_{h^*}\}|  
= |\{j \in S \mid j \leq i_{h^*}\}| - h^*, \\
|T_2| & = |\{j \in S \mid j \leq i_{h^*}\}| - |\{j \in S \setminus S_2 \mid j \leq i_{h^*}\}| 
= |\{j \in S \mid j \leq i_{h^*}\}| - h^* + 1,
\end{align*}
therefore $|T_1| < |T_2|$.
To obtain a contradiction we now show $|T_1| \ge |T_2|$.
Since $f(S_2) = \tilde S_1$ and $f(j) \leq j \ \forall j \in S_2$, we obtain 
\begin{align*}
|T_1| = |\{j \in \tilde S_1 \mid j \leq i_{h^*} \}| = |\{j \in f(S_2) \mid j \leq i_{h^*}\}| \geq |\{j \in S_2 \mid j \leq i_{h^*}\}| = |T_2|.
\end{align*}
We have derived a contradiction.
Therefore, we have shown that $j_h \leq i_h \ \forall h \in [t]$, and this concludes the proof.
\qed \end{proof}

\section{Multi-cover Inequalities and Variation}
\label{sec:MCI}

Throughout this section, we consider a TOMKS $K : = \{x \in \{0,1\}^n \mid Ax \leq b\}$, and we introduce the \emph{multi-cover inequalities (MCIs)}, which form a novel family of valid inequalities for $K$.
Each MCI can be obtained from a special family of covers $\{C_1, \ldots, C_k\}$ for $K$ that we call a multi-cover.
In order to define a multi-cover,
%Given a family of covers $\{S_1, \ldots, S_k\}$ to the 
%in this section we want to investigate a novel cut-generating method which is able to derive cutting-planes for $K$ from these covers, namely the \emph{multi-cover inequalities (MCI)}.
%Recall that in combinatorics, \emph{Sperner family} refers to a family of subsets of a finite set $E$, in which none of the sets contains another.
%In order to define our inequalities, we introduce some definitions.
we first introduce the \emph{discrepancy family}.
\begin{definition}[Discrepancy family]
For a family of sets $\C = \{C_1, \ldots, C_k\}$, we say that $\{C_1 \setminus \cap_{h=1}^k C_h, \ldots, C_k \setminus \cap_{h=1}^k C_h\}$ is the \emph{discrepancy family} of $\C$, and we denote it by $\mathcal D(\C)$.
\end{definition}

%\begin{definition}
%Given two set families $\F, \mathscr O \subseteq 2^{[n]}$ with $\cap_{O \in \mathscr O} O = \emptyset$, we say 
%$\F$ has $\mathscr O$-\emph{skeleton}, or 
%$\F \sim \mathscr O$, if there exists an order isomorphism $f$ from poset $(\cup_{D \in \mathcal D(\F)}D, \leq)$ to $([|\cup_{D \in \mathcal D(\F)}D|], \leq )$, such that $\mathscr O = \{ f(D) \mid D \in \mathcal D(\F)\}$. Here $\mathscr O$ is called the \emph{skeleton} of $\F.$
%\end{definition}
%
% \begin{definition}[Skeleton]
% Let $\C \subseteq 2^{[n]}$.
% For any $i \in \cup_{D \in \mathcal{D}(\C)} D$, let $f(i) := k$ if $i$ is the $k$-th smallest element of $\cup_{D \in \mathcal{D}(\C)} D$. 
% Then we say that $\{f(D) \mid D \in \mathcal{D}(\C)\}$ is the \emph{skeleton} of $\C$.
% \end{definition}
%
%\todo{Do you really need the skeleton as opposed to the discrepancy family? Where?}
%
% \begin{example}
% Let $\C = \{\{2, 4, 7\}, \{3, 4, 7, 8\}\}$.
% We then have $\mathcal{D}(\C) = \{\{2\}, \{3, 8\}\}$ and $\cup_{D \in  \mathcal{D}(\C)} D = \{2,3,8\}$.
% %In the set $\{2,3,8\}$, the index 2 is the smallest element, the index 3 is the second smallest element, and the index 8 is the third smallest element.
% We obtain $f(2) = 1, f(3) = 2, f(8) = 3$, and the skeleton of $\C$ is $\{\{1\}, \{2,3\}\}$. 
% \end{example}
%
%\todo{Use $i,j$ for indices, instead of $x$, or $a,b$. This is because we already use $x$ for a vector in $\R^n$ instead, and $a,b$ recall the inequalities of the system defining $K$. In any case we should be consistent with the notation. I have made the change until here.}
%
Now we can define the concept of a multi-cover.

% \begin{definition}[Multi-cover]
% Let $\{C_1, \ldots, C_k\}$ be a family of covers for $K$, let $\{S_1, \ldots, S_k\}$ be its skeleton, and let $n' := \max\{i \mid i \in \cup_{j=1}^k S_j\}$.
% %As we pointed out in Observation~\ref{obs: domination_order} that the domination relation forms a partial order over $2^{[n']}$, therefore $2^{[n']}$ can be seen as a poset coupled with the partial order $\dominates$. 
% %From Observation~\ref{obs: domination_order}, we have that $(2^{[n']}, \dominates)$ is a poset.
% %For any set $\{T_i \mid i \in [k]\} \subseteq 2^{[n]}$, we say it is a \emph{maximal antichain ordered by domination}, 
% We then say that $\{C_1, \ldots, C_k\}$ is a \emph{multi-cover} for $K$ if $\{S_1, \ldots, S_k\}$ \textcolor{red}{ is a maximal antichain of the poset $(2^{[n']},\dominates)$:} $\{S_1, \ldots, S_k\}$ is an antichain of the poset $(2^{[n']},\dominates)$, and for any set $T \subseteq [n']$ with $T \neq S_h$ for every $h \in [k]$, there exists some $h' \in [k]$ such that $T \dominates S_{h'}$ or $S_{h'} \dominates T$.

% \end{definition}

\begin{definition}[Multi-cover]
\label{def:multi-cover}
Let $\C$ be a family of covers for $K$.
We then say that $\C$ is a \emph{multi-cover} for $K$ if for any set $T \subseteq \cup_{D \in \mathcal D(\C)} D$ with $T \notin \mathcal{D}(\C)$, there exists some $D' \in \mathcal{D}(\C)$ such that $T \dominates D'$ or $D' \dominates T$.
\end{definition}

\begin{example}
\label{exm: 1}
Consider the TOMKS:
{\small
\begin{align*}
% \begin{split}
K: = \{x \in \{0,1\}^5 \mid \ & 19 x_1 + 11x_2 + 5x_3 + 4x_4 + 2x_5 \leq 31, \\
& 16 x_1 + 10 x_2 + 7 x_3 + 5 x_4 + 3 x_5  \leq 30\}.
% \end{split}
\end{align*}
}
We have that $\C = \{C_1, C_2\} := \{\{1,2,5\},\{1,3,4,5\}\}$ is a multi-cover.
In fact, it is simple to check that $C_1$ and $C_2$ are covers for $K$.
Furthermore, the discrepancy family of $\C$ is $\mathcal{D}(\C) = \{\{2\}, \{3,4\}\}$, and for any $T \subseteq \{2,3,4\}$, if $|T| = 1$ we have $\{2\} \dominates T$, while if $|T| \geq 2$ we have $T \dominates \{3,4\}$.
$\hfill\diamond$
\end{example}

% Recall that in order theory, an \emph{antichain} is a subset of a partially ordered set (poset) such that any two distinct elements in the subset are incomparable.
% As we pointed out in Observation~\ref{obs: domination_order} that the domination relation forms a partial order over $2^{[n]}$, therefore $2^{[n]}$ can be seen as a poset coupled with the partial order $\dominates$. 
% For any set $\{T_i \mid i \in [k]\} \subseteq 2^{[n]}$, we say it is a \emph{maximal antichain ordered by domination}, if it is an antichain of the poset $(2^{[n]},\dominates)$, and for any set $T \subseteq [n]$ with $T \neq T_i$ for every $i \in [k]$, there exists some $j \in [k]$ such that $T \dominates T_j$ or $T_j \dominates T$.

For a given family of covers $\{C_1, \ldots, C_k\}$ for $K$, 
throughout this paper,
for ease of notation we define 
$C_0 := \cap_{h=1}^k C_h$, 
$C := \cup_{h=1}^k C_h$, 
$\bar C_h: = C \setminus C_h$ for $h \in [k]$,
and similarly $\bar T := C \setminus T$ for any $T \subseteq C$. 

We are now ready to introduce our multi-cover inequalities for $K$.
These inequalities are defined by the following algorithm.
%, whose input is a multi-cover $\{C_1, \ldots, C_k\}$ for $K$, and whose output is a multi-cover inequality.

%\todo{Isn't it more intuitive to call the covers $C$ instead of $S$?}

%Now we are going to propose our cover-based cut generating method, given a cover family whose skeleton forms a maximal antichain ordered by domination.

\begin{algorithm}[H]
   \caption{Multi-cover inequality (MCI)}
   \label{alg:mci}
       \hspace*{\algorithmicindent} \textbf{Input:} 
       A multi-cover $\{C_1, \ldots, C_k\}$ for $K$. \\
       \hspace*{\algorithmicindent} \textbf{Output:} 
       A multi-cover inequality.
    \begin{algorithmic}[1]
    \State Let 
    %$C: = \cup_{i=1}^k C_i, C_0 = \cap_{i=1}^k C_i$, 
    $\{i_1, \ldots, i_m\} : = C \setminus C_0$, with $i_1 < \ldots < i_m$.
    \State Set $\alpha_i := 1$ if $i \in  \{i_1, \ldots, i_m\}$, and $\alpha_i := 0$ otherwise. \label{step: MCI_2}
    \For {$t=m-1, \ldots, 1$} \label{step: MCI_3}
        \State 
        let $\alpha_{i_t}$ be \textit{any} integer $\geq \max_{h \in [k]: i_t \in C_h} \max_{\ell \in \bar C_h: \ell>i_t} \alpha_\ell + 1$.
         \label{step: MCI_4}
%    \EndIf
    \EndFor
%%%%%%%%%%%%%%%%%%%%%%%%%%%%%%%%%%%%%%
%     \If {$\exists h^* \in [k]$ such that $\alpha(C_{h^*})$ is the unique maximum of $\{\alpha(C_h) \mid h \in [k]\}$} \label{step: if_condition}
%     \State
% 	Let $i_{t^*}$ be the minimum of $\{i_1, \ldots, i_{m}\}$ such that
% 	 $$
% 	 \{h \in [k] : |C_h \cap \{i_1, \ldots, i_{t^*}\}| - |C_{h^*} \cap \{i_1, \ldots, i_{t^*}\}| = 1\} \neq \emptyset.
% 	 $$  \label{step: condition_6}
% 	 \State
% 	 $\delta: = \min\{\alpha(C_{h^*}) - \alpha(C_h) : |C_h \cap \{i_1, \ldots, i_{t^*}\}| - |C_{h^*} \cap \{i_1, \ldots, i_{t^*}\}| = 1\}$.  \label{step: condition_7}
% 	\For {$t=1, \ldots, t^*$}  \label{step: condition_8}
% 	\State $\alpha_{i_t} := \alpha_{i_t} + \delta$.  \label{step: condition_9}
% 	\EndFor
%     \EndIf
%%%%%%%%%%%%%%%%%%%%%%%%%%%%%%%%%%%%%%
%    \State For all $s \in C_0$ and $s > i_m$, $ \alpha_s = \min_{i = 1}^k \max_{j \in S \setminus C_i} \alpha_j.$ \label{step: MCI_5}
%    \State For all $s \in C_0$ and $s < i_m$, $ \alpha_s = \min_{i = 1}^k \alpha(S \setminus C_i) + 1$.\label{step: MCI_6}
\For {$j \in C_0$}
\State let $\alpha_j$ be \textit{any} integer $\geq \min_{h \in [k]} \max \big\{ \max_{\ell<j, \ell \in \bar C_h} \alpha_\ell, \sum_{\ell>j, \ell \in \bar C_h} \alpha_\ell + 1 \big\}$.	\label{step: modified_coe_intersection}
\EndFor
    \State Set $\beta := \max_{h=1}^k \alpha(C_h) - 1.$ \label{step: MCI_7}\\
      \Return the inequality $\alpha^T x \leq \beta$.
\end{algorithmic}
\end{algorithm}

%\todo{What is a `maximal antichain ordered by domination'? What does this mean?} 

We remark that in Algorithm~\ref{alg:mci},
%at Step~\ref{step: MCI_4} and \ref{step: modified_coe_intersection}, 
in the case where we take the minimum or maximum over an empty set (see Step~\ref{step: MCI_4} and \ref{step: modified_coe_intersection}), the corresponding minimum or maximum should be set to zero.

For the above algorithm, we have the following easy observation.

\begin{observation}
Given a multi-cover $\{C_h\}_{h=1}^k$, Algorithm~\ref{alg:mci} performs a number of operations that is polynomial in $|C|$ and $k$.
Furthermore, $\supp(\alpha) = C$.
\end{observation}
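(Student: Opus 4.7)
The plan is a direct reading of Algorithm~\ref{alg:mci}: both conclusions reduce to bookkeeping about how $\alpha$ is initialized in Step~\ref{step: MCI_2} and updated in Steps~\ref{step: MCI_4} and~\ref{step: modified_coe_intersection}. The only minor subtlety is the convention, stated just after the algorithm, that an empty maximum or sum equals zero; I will invoke this when arguing positivity at indices where $\bar C_h$ happens to be empty.

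For the complexity claim, my approach is a simple operation count. Step~\ref{step: MCI_2} and Step~\ref{step: MCI_7} each cost $O(k|C|)$. The loop in Step~\ref{step: MCI_3} runs at most $|C|$ times, and Step~\ref{step: MCI_4} inside it performs a double maximum over at most $k$ values of $h$ and at most $|C|$ values of $\ell$; the loop over $j \in C_0$ similarly performs $O(k|C|)$ work per element. The grand total is $O(k|C|^2)$, which is polynomial in $|C|$ and $k$.

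For the support claim, I would split into three cases according to which subset of $[n]$ the index belongs to. First, if $i \notin C$, Step~\ref{step: MCI_2} sets $\alpha_i=0$, and no subsequent step modifies it since both update loops iterate only over indices in $C$. Second, if $i \in C \setminus C_0 = \{i_1,\ldots,i_m\}$, then $\alpha_{i_m}=1$ is fixed in Step~\ref{step: MCI_2}, and I would prove by reverse induction on $t$ that $\alpha_{i_t}\ge 1$, using that the lower bound in Step~\ref{step: MCI_4} is at least $0+1=1$ even when the inner maxima are taken over the empty set. Third, if $j \in C_0$, then inside the maximum in Step~\ref{step: modified_coe_intersection} the quantity $\sum_{\ell>j,\,\ell\in\bar C_h}\alpha_\ell + 1 \ge 1$ appears for every $h \in [k]$, so the outer minimum is at least $1$ and hence $\alpha_j \ge 1$. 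Combining the three cases yields $\supp(\alpha)=C$. I do not foresee a genuine obstacle here, since the observation is essentially a sanity check on the algorithm's design — once the empty-maximum/empty-sum convention is consistently applied, the lower bounds in Steps~\ref{step: MCI_4} and~\ref{step: modified_coe_intersection} are manifestly at least $1$, and positivity follows immediately.
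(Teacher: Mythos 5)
Your proof is correct and is exactly the direct verification the paper intends: the statement appears only as an unproved ``easy observation,'' and your operation count plus the three-way case split (with the empty-max/empty-sum convention forcing the lower bounds in Steps~\ref{step: MCI_4} and~\ref{step: modified_coe_intersection} to be at least $1$) is the implicit argument. No gaps.
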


At Step~\ref{step: MCI_4} and \ref{step: modified_coe_intersection} of Algorithm~\ref{alg:mci}, when the $\geq$ is fixed to $=$, the obtained MCI is called the \emph{simple-MCI}, which was given in \cite{del2021multi}. 
Note that given a multi-cover, we can obtain several MCIs, but only one simple-MCI. 

\begin{definition}[simple-MCI]
Given a multi-cover $\{C_1, \ldots, C_k\}$ and one of its MCIs $\alpha^T x \leq \beta$, if for every $i \in C \setminus C_0, \alpha_i = \max_{h \in [k]: i \in C_h} \max_{\ell \in \bar C_h: \ell>i} \alpha_\ell + 1$ and for every $j \in C_0, \alpha_j = \min_{h \in [k]} \max \big\{ \max_{\ell<j, \ell \in \bar C_h} \alpha_\ell, \sum_{t>j, t \in \bar C_h} \alpha_t + 1 \big\}$, then such MCI $\alpha^T x \leq \beta$ is called a \emph{simple-MCI (S-MCI)}.
\end{definition}

The main result of this section is that, given a multi-cover for $K$, then any of its MCI is valid for $\conv(K)$. Before presenting the theorem, we will need the following auxiliary result.

\begin{proposition}
\label{prop: key_prop_valid}
Let $\{C_h\}_{h=1}^k$ be a multi-cover and let $\alpha^T x \leq \beta$ be one of its associated MCIs. 
If there exists $T \subseteq C \setminus C_0$, with $T \notin \{\bar C_h\}_{h=1}^k$ and $T \dominates \bar C_{h'}$ for some $h' \in [k]$, then $\alpha(T) > \alpha(\bar C_{h'})$.
\end{proposition}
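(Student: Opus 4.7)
The plan is to reduce the statement to an equivalent inequality on a restricted pair of sets and then to exploit the recursive definition of the coefficients in Step~\ref{step: MCI_4} of Algorithm~\ref{alg:mci}. Let $T' := T \setminus \bar C_{h'}$ and $W := \bar C_{h'} \setminus T$. Since $\alpha(T) - \alpha(\bar C_{h'}) = \alpha(T') - \alpha(W)$, it suffices to prove $\alpha(T') > \alpha(W)$. Applying Lemma~\ref{lem: domination_minus_intersection} with $S' = T \cap \bar C_{h'}$, the hypothesis $T \dominates \bar C_{h'}$ is equivalent to $T' \dominates W$, which yields an injective map $f : W \to T'$ with $f(i) \leq i$ for every $i \in W$.

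Next I would upgrade this to the strict inequality $f(i) < i$. Since $T' = T \cap C_{h'} \subseteq C_{h'}$ and $W \subseteq \bar C_{h'} = C \setminus C_{h'}$, the indices $f(i)$ and $i$ lie in disjoint sets and are therefore distinct, forcing $f(i) < i$. Moreover $T' \subseteq (C \setminus C_0) \cap C_{h'} = C_{h'} \setminus C_0$, so the coefficient $\alpha_{f(i)}$ is governed by Step~\ref{step: MCI_4} of Algorithm~\ref{alg:mci}. Instantiating that step at $h = h'$ gives $\alpha_{f(i)} \geq \max_{\ell \in \bar C_{h'}:\, \ell > f(i)} \alpha_\ell + 1 \geq \alpha_i + 1$, where the second inequality uses that $i \in \bar C_{h'}$ with $i > f(i)$ is itself an admissible choice of $\ell$.

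Summing this pointwise bound over $i \in W$ and using the injectivity of $f$ into $T'$ produces $\alpha(T') \geq \sum_{i \in W} \alpha_{f(i)} \geq \alpha(W) + |W|$, which closes the argument as long as $W \neq \emptyset$. The remaining case $W = \emptyset$ means $\bar C_{h'} \subseteq T$, and since $T \notin \{\bar C_h\}_{h=1}^k$ the containment must be strict, so $T' \neq \emptyset$; because every coefficient $\alpha_i$ for $i \in C \setminus C_0$ is at least $1$ by construction of Algorithm~\ref{alg:mci}, we conclude $\alpha(T) - \alpha(\bar C_{h'}) = \alpha(T') > 0$.

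The main obstacle I anticipate is carefully tracking the set-memberships after the reduction by Lemma~\ref{lem: domination_minus_intersection}: once one confirms $T' \subseteq C_{h'} \setminus C_0$ and $W \subseteq \bar C_{h'}$, the injection supplied by the dominance relation aligns precisely with the $(C_{h'}, \bar C_{h'})$ dichotomy on which Step~\ref{step: MCI_4} is built, so the strict coefficient inequality $\alpha_{f(i)} > \alpha_i$ drops out from a single invocation of the defining recursion. Notably, the hypothesis $T \notin \{\bar C_h\}_{h=1}^k$ is used only to handle the degenerate case $W = \emptyset$; for all other $T$ the domination hypothesis alone does the work.
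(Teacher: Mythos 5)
Your proposal is correct and follows essentially the same route as the paper's proof: the decomposition $T' = T \setminus \bar C_{h'}$, $W = \bar C_{h'} \setminus T$ matches the paper's $T_1, T_2$, the reduction via Lemma~\ref{lem: domination_minus_intersection} is identical, and the strict inequality $\alpha_{f(i)} > \alpha_i$ is extracted from Step~\ref{step: MCI_4} in the same way. The only (harmless) cosmetic difference is that you treat the degenerate case as $W = \emptyset$ at the end rather than $T_1 \neq \emptyset$ up front.
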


 We remark that Proposition~\ref{prop: key_prop_valid} does not depend on the specific property of multi-covers.  That is, it holds for any family of covers.

 \begin{proof}
Let $T$ and $\bar C_{h'}$ be the sets as assumed in the statement of this proposition, with $T \dominates \bar C_{h'}$.
Let $T_0: = T \cap \bar C_{h'}$, $T_1: = T \setminus T_0,$ and $T_2: = \bar C_{h'} \setminus T_0$. 
Then $T = T_0 \cup T_1$, $\bar C_{h'} = T_0 \cup T_2$. 
Since $T \neq \bar C_{h'}$ and $T \dominates \bar C_{h'}$, then we know $T_1 \neq \emptyset$. By Lemma~\ref{lem: domination_minus_intersection}, we know that $T_1 \dominates T_2$. 
If $T_2 = \emptyset$, then $\alpha(T) = \alpha(T_0) + \alpha(T_1) > \alpha(T_0) = \alpha(\bar C_{h'})$. Hence we assume $T_2 \neq \emptyset$. 
Let $T_2: = \{j_1, \ldots, j_t\}$. Since $T_1 \dominates T_2$ and $T_1 \cap T_2 = \emptyset$, we know there exists $\{k_1, \ldots, k_t\} \subseteq T_1$ such that $k_1 < j_1, \ldots, k_t < j_t$. 

% Next, we are going to prove that $\alpha_{k_1} > \alpha_{j_1}, \ldots, \alpha_{k_t} > \alpha_{j_t}$ by two cases:
W.l.o.g., consider $k_1$ and $j_1$. 
By definition, there is $k_1 < j_1, k_1 \notin \bar C_{h'}, j_1 \in \bar C_{h'}$, or equivalently: $k_1 < j_1, k_1 \in C_{h'}, j_1 \in \bar C_{h'}$. Therefore, $j_1 \in \{\ell \mid \ell > k_1, \ell \in \bar C_{h'}, k_1 \in C_{h'}\}$. 
By construction of MCI, we know that $\alpha_{k_1} > \alpha_{j_1}$. 
For the remaining $j_2$ and $j_2, \ldots, k_t$ and $j_t$, the same argument yields $\alpha_{k_2} > \alpha_{j_2}, \ldots, \alpha_{k_t} > \alpha_{j_t}$.

%%%%%%%%%%%%%%%%%%%%%%%%%%%%%%%%%%%%%%%%%%%%
%  If the condition at Step~\ref{step: if_condition} in Algorithm~\ref{alg: cover_based_cut} is satisfied: 
% For $i \in C \setminus C_0$, let $\gamma_i$ be the intermediate $\alpha_i$ at Step~\ref{step: if_condition}. Then from the argument of the last case, we know that 
% $\gamma_{k_1} > \gamma_{j_1}, \ldots, \gamma_{k_t} > \gamma_{j_t}$. 
% By Step~\ref{step: condition_8} and \ref{step: condition_9}, we know for any $i \in C \setminus C_0, \alpha_i = \gamma_i + \delta \cdot \mathbbm{1}\{i \leq i_{t^*}\}$. 
% Since $k_1 < j_1, \ldots, k_t < j_t$, therefore we have $\mathbbm{1}\{k_1 \leq i_{t^*}\} \geq \mathbbm{1}\{j_1 \leq i_{t^*}\}, \ldots, \mathbbm{1}\{k_t \leq i_{t^*}\} \geq \mathbbm{1}\{j_t \leq i_{t^*}\}$. Hence $\alpha_{k_1} = \gamma_{k_1} + \delta \cdot \mathbbm{1}\{k_1 \leq i_{t^*}\} > \gamma_{j_1} + \delta \cdot \mathbbm{1}\{j_1 \leq i_{t^*}\} = \alpha_{j_1}$, and similarly there is also $\alpha_{k_2} > \alpha_{j_2}, \ldots, \alpha_{k_t} > \alpha_{j_t}$.
%%%%%%%%%%%%%%%%%%%%%%%%%%%%%%%%%%%%%%%%%%%%

Therefore, $\alpha(T) = \alpha(T_1) + \alpha(T_0) \geq \alpha_{k_1} + \ldots + \alpha_{k_t} + \alpha(T_0) > \alpha_{j_1} + \ldots + \alpha_{j_t} + \alpha(T_0) = \alpha(T_2) + \alpha(T_0) =  \alpha(\bar C_{h'})$, which concludes the proof. 
 \qed \end{proof}

Now we are ready to present the first main result of this paper.

\begin{theorem}
\label{theo: MCI_antichain}
Given a multi-cover $\{C_h\}_{h=1}^k$ for a TOMKS $K$, then the corresponding MCIs are valid for $\conv(K)$. 
\end{theorem}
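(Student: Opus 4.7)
The plan is to argue by contradiction. Suppose $T \subseteq [n]$ satisfies $\chi^T \in K$ yet $\alpha(T) > \beta$, equivalently $\alpha(T) \geq \alpha(C_h)$ for every $h \in [k]$. Since $\supp(\alpha) = C$ and $A$ has non-negative entries, I may replace $T$ by $T \cap C$ (which remains feasible), so I take $T \subseteq C$. The contradiction will be obtained by exhibiting either some $h^*$ with $T \dominates C_{h^*}$ (which forces $T$ to be a cover, contradicting $\chi^T \in K$) or some $h^*$ with $\alpha(T) < \alpha(C_{h^*})$ (contradicting the hypothesis directly). Decompose $T = T_0 \cup T'$ with $T_0 := T \cap C_0$ and $T' := T \setminus C_0 \subseteq C \setminus C_0$, and split on whether $T_0 = C_0$.

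Consider first the case $T_0 = C_0$. The hypothesis reduces to $\alpha(T') \geq \alpha(D_h)$ for all $h$, where $D_h := C_h \setminus C_0$ is the discrepancy family. If $T' = D_{h^*}$ for some $h^*$, then $T = C_{h^*}$ is a cover. Otherwise the multi-cover property furnishes some $D_{h'}$ comparable to $T'$. If $T' \dominates D_{h'}$, then Lemma~\ref{lem: domination_minus_intersection} applied with $S' = C_0$ upgrades this to $T \dominates C_{h'}$. If instead $D_{h'} \dominates T'$ (necessarily strictly), Lemma~\ref{lem: domination_dual} applied inside $C \setminus C_0$ yields $(C \setminus C_0) \setminus T' \dominates \bar C_{h'}$, and one checks that $(C \setminus C_0) \setminus T' \notin \{\bar C_h\}_{h=1}^k$ (otherwise $T' = D_{h''}$ for some $h''$, contrary to assumption). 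Proposition~\ref{prop: key_prop_valid} then delivers $\alpha((C \setminus C_0) \setminus T') > \alpha(\bar C_{h'})$, which rearranges via $\alpha(C \setminus C_0)$-subtraction to $\alpha(T') < \alpha(D_{h'})$, the required contradiction.

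The harder case is $T_0 \subsetneq C_0$, where the formula at Step~\ref{step: modified_coe_intersection} comes into play. Using the identity $\alpha(T) - \alpha(C_h) = -\alpha(C_0 \setminus T_0) + \alpha(T' \cap \bar C_h) - \alpha(D_h \setminus T')$, the goal becomes to find $h^*$ with $\alpha(T' \cap \bar C_{h^*}) < \alpha(C_0 \setminus T_0) + \alpha(D_{h^*} \setminus T')$. I would pick $j^* \in C_0 \setminus T_0$ and let $h^*$ realize the minimum in $\alpha_{j^*} \geq \min_h \max\{\max_{\ell < j^*,\ell \in \bar C_h} \alpha_\ell,\, \sum_{\ell > j^*,\ell \in \bar C_h} \alpha_\ell + 1\}$. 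The $\sum$-branch immediately bounds the $\alpha$-contribution from elements of $T \cap \bar C_{h^*}$ larger than $j^*$ by $\alpha_{j^*} - 1$, while the $\max$-branch caps each $\alpha_\ell$ for $\ell < j^*, \ell \in \bar C_{h^*}$ by $\alpha_{j^*}$. The principal obstacle is the ``left'' accounting: naively the number of such smaller elements of $T' \cap \bar C_{h^*}$ could be large, so per-element control alone is not enough. To handle this, one invokes the Hall-style characterization of dominance (the fact that $T$ does not dominate $C_{h^*}$ forces a threshold $t^*$ with $|T \cap [1, t^*]| < |C_{h^*} \cap [1, t^*]|$), which bounds the count of such $L$-elements in terms of $|C_0 \setminus T_0| + |D_{h^*} \setminus T'|$; combined with the per-element $\alpha$-bound, this is exactly calibrated by Step~\ref{step: modified_coe_intersection} to yield the required strict inequality and close the argument.
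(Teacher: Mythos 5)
Your reduction to $T \subseteq C$ and your Case $T_0 = C_0$ are correct; that case is essentially the paper's first case transposed to the primal side (the paper works throughout with complements $\bar T$, $\bar C_h$ and shows $\bar C_{h^*} \dominates \bar T$, but the chain multi-cover comparability $\to$ Lemma~\ref{lem: domination_minus_intersection} $\to$ either ``$T$ is a cover'' or Proposition~\ref{prop: key_prop_valid} is the same).

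The case $T_0 \subsetneq C_0$ has a genuine gap. You correctly identify the target inequality $\alpha(T' \cap \bar C_{h^*}) < \alpha(C_0 \setminus T_0) + \alpha(D_{h^*} \setminus T')$ and the two branches of Step~\ref{step: modified_coe_intersection}, and you correctly flag the obstacle (many elements of $T' \cap \bar C_{h^*}$ to the left of $j^*$, each only bounded by $\alpha_{j^*}$ individually), but the sentence that is supposed to close the argument --- ``Hall-style characterization \dots bounds the count \dots exactly calibrated by Step~\ref{step: modified_coe_intersection}'' --- is an assertion, not a proof. A pure cardinality bound on $L := |T' \cap \bar C_{h^*} \cap [1,j^*)|$ in terms of $|C_0 \setminus T_0| + |D_{h^*} \setminus T'|$ cannot suffice: the elements of $D_{h^*} \setminus T'$ that are supposed to pay for the left elements may all lie to the right of $j^*$ and carry $\alpha$-value $1$, while each left element can cost up to $\alpha_{j^*}$. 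What you actually need is a value-matching (each left element of $T' \cap \bar C_{h^*}$ paired with an element of at least equal $\alpha$-weight), which is itself a dominance statement requiring proof; moreover the Hall threshold $t^*$ coming from ``$T$ does not dominate $C_{h^*}$'' bears no a priori relation to your chosen $j^*$, and since the minimizing $h^*$ in Step~\ref{step: modified_coe_intersection} varies with $j^*$, you cannot aggregate over the elements of $C_0 \setminus T_0$ either. The paper sidesteps this entirely with an iterative exchange: pick $t^* \in C_0 \setminus T$, use the sum-branch to either reach an immediate contradiction or produce $\ell^* < t^*$ with $\ell^* \in T \cap \bar C_{h^*}$, and swap ($D := T \cup \{t^*\} \setminus \{\ell^*\}$, so $T \dominates D$ and $\alpha(D) \ge \alpha(T)$ by the max-branch alone); each swap increases $|D \cap C_0|$ by one, so finitely many swaps reduce to the first case. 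Replacing your one-shot counting claim with this exchange loop is the missing ingredient.
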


\begin{proof}
Since $\supp(\alpha) = C$, in order to show that $\alpha^T x \leq \beta$ is valid for $\conv(K)$, it suffices to show that, for any $T \subseteq C$ with $\alpha(T) \geq \beta + 1$, $T$ must be a cover for $K$. Note that by Step~\ref{step: MCI_7} of Algorithm~\ref{alg:mci}, we have $\beta + 1= \max_{h=1}^k \alpha(C_h)$, 
and for any $T_1, T_2 \subseteq C$, $\alpha(T_1) \geq \alpha(T_2)$ is equivalent to $\alpha(\bar T_1) \leq \alpha(\bar T_2)$.
Therefore from Lemma~\ref{lem: domination_dual}, it suffices to show that: for any $T \subseteq C$ with $\alpha(\bar T) \leq \min_{h=1}^k\alpha(\bar C_h)$, there must exist some $h^* \in [k]$ such that $\bar C_{h^*} \dominates \bar T$. We will assume that $T \notin \{C_h\}_{h=1}^k$ since otherwise $\bar C_{h^*} \dominates \bar T$ trivially holds. 
In the following, the proof is subdivided into two cases, depending on whether $\bar T \cap C_0 = \emptyset$ or not.
%In order to show that $\alpha^T x \leq \beta$ is valid for $\conv(K)$, it suffices to show that for any $C \subseteq S$ with $\alpha(C) \geq \beta + 1 = \max_{i=1}^k \alpha(S_i),$ $C$ must also be a cover for $K$. This further suffices for us to show: for any $\bar C \subseteq S$ with $\alpha(\bar C) \leq \min_{i=1}^k \alpha(\bar S_i)$, there must exist some $h^* \in [k]$ such that $\bar S_{h^*} \dominates \bar C.$

\smallskip

First, we consider the case $\bar T \cap C_0 = \emptyset$.
In this case, we have $C_0 \subseteq T$. 
%Let $S_0^< : = \{i \in S_0 \mid i < i_m\}, S_0^> : = \{i \in S_0 \mid i > i_m\}$. First, we are going to show that: $S_0^< \subseteq C$, or equivalently, $\bar C \cap S_0^< = \emptyset$. By Step~\ref{step: MCI_6}, we know that for any $j \in S_0^<, \alpha_j = \min_{i = 1}^k \alpha(\bar S_i) + 1 > \alpha(\bar C)$, hence $\bar C \cap S_0^< = \emptyset$. 
%
%
%Now let's assume $\bar C \cap S_0^> \neq \emptyset$. Let $\bar C: = C_0 \cup C_1$ where $C_0 \subseteq S \setminus S_0, C_1 \subseteq S_0^>$. For any $c \in C_1,$ by Step~\ref{step: MCI_5}, we know $\alpha_c = \min_{i=1}^k \max_{j \in \bar S_i} \alpha_j$. Let $h^* \in [k]$ such that $\alpha_c = \max_{j \in \bar S_{h^*}} \alpha_j$. Since $\alpha(\bar C) = \alpha(C_0) + \alpha(C_1) \leq \min_{i = 1}^k \alpha(\bar S_i) \leq \alpha(\bar S_{h^*})$, so we have: $\alpha(\bar S_{h^*} \setminus C_0) \geq \alpha(\bar S_{h^*}) - \alpha(C_0) \geq \alpha(C_1)$.
%Hence:
%\begin{align*}
%|\bar S_{h^*} \setminus C_0| \cdot \alpha_c & = |\bar S_{h^*} \setminus C_0| \cdot \max_{j \in \bar S_{h^*}} \alpha_j \\
%& \geq \alpha(\bar S_{h^*} \setminus C_0) \\
%& \geq \alpha(C_1) =  |C_1| \cdot \alpha_c. 
%\end{align*}
%We obtain that $|\bar S_{h^*} \setminus C_0| \geq |C_1|$. Furthermore, since for any $i \in \bar S_{h^*} \setminus C_0$ and $j \in C_1$, there is $i < i_m < j$, 
%it is easy to see that $\bar S_{h^*} \setminus C_0 \dominates C_1$. Therefore $\bar S_{h^*} = (\bar S_{h^*} \setminus C_0) \cup C_0 \dominates C_1 \cup C_0 = \bar C$, and this completes the proof. 
by Definition~\ref{def:multi-cover} of multi-cover, we know there must exist $h^* \in [k]$ such that either $C_{h^*} \setminus C_0 \dominates  T \setminus C_0$, or $ T \setminus C_0 \dominates C_{h^*} \setminus C_0$. By the above assumption $C_0 \subseteq T$ and Lemma~\ref{lem: domination_minus_intersection}, we know that either $C_{h^*} \dominates T$ or $T \dominates C_{h^*}$. If $T \dominates C_{h^*}$, then Lemma~\ref{lem: domination_dual} implies $\bar C_{h^*} \dominates \bar T$, which completes the proof. 
So we assume $C_{h^*} \dominates T$, or equivalently, $\bar T \dominates \bar C_{h^*}$. Since $\bar T \subseteq C \setminus C_0$ and $\bar T \neq \bar C_{h^*}$, by Proposition~\ref{prop: key_prop_valid} we obtain that $\alpha(\bar T) > \alpha(\bar C_{h^*})$, and this contradicts to the assumption that $\alpha(\bar T) \leq \min_{h=1}^k\alpha(\bar C_h)$. 

\smallskip

Next, we consider the case $\bar T \cap C_0 \neq \emptyset$.
In this case, we want to construct a $\bar D \subseteq C$ with $\bar D \cap C_0 = \emptyset, \alpha(\bar D) \leq \alpha(\bar T)$, and $\bar D \dominates \bar T$. Then since $\alpha(\bar T) \leq \min_{h=1}^k\alpha(\bar C_h)$, we have $\alpha(\bar D) \leq \min_{h=1}^k\alpha(\bar C_h)$ where $\bar D \cap C_0 = \emptyset$. According to our discussion in the previous case, we know that there exists some $h^* \in [k]$ such that $\bar C_{h^*} \dominates \bar D$, which implies $\bar C_{h^*} \dominates \bar T$ since $\dominates$ forms a partial order, and the proof is completed.

Next, we arbitrarily pick $t^* \in \bar T \cap C_0$. 
Then by Step~\ref{step: modified_coe_intersection} of Algorithm~\ref{alg:mci}, we know that there exists $h^* \in [k]$, such that:
$$\alpha_{t^*} \geq \max \big\{ \max_{\ell <t^*, \ell \in \bar C_{h^*}} \alpha_\ell, \sum_{t>t^*, t \in \bar C_{h^*}} \alpha_t + 1 \big\}.
$$
If $\{\ell \in \bar C_{h^*} \mid \ell < t^*\} \subseteq \bar T$,
then we have $\alpha(\bar T) \geq \sum_{\ell < t^*, \ell \in \bar C_{h^*}} \alpha_\ell + \alpha_{t^*}$, which is at least $\sum_{\ell < t^*, \ell \in \bar C_{h^*}} \alpha_\ell + \sum_{t>t^*, t \in \bar C_{h^*}} \alpha_t + 1$. Since $t^* \notin \bar C_{h^*}$, we know that $\sum_{\ell < t^*, \ell \in \bar C_{h^*}} \alpha_\ell + \sum_{t>t^*, t \in \bar C_{h^*}} \alpha_t + 1 = \alpha(\bar C_{h^*}) + 1$. Hence $\alpha(\bar T) > \alpha(\bar C_{h^*})$, and this contradicts the initial assumption $\alpha(\bar T) \leq \min_{h=1}^k\alpha(\bar C_h)$. 
Therefore we can find some $\ell^* \in \bar C_{h^*}, \ell^* < t^*$ such that $\ell^* \notin \bar T$. 
Now define $\bar D: = \bar T \cup \{\ell^*\} \setminus \{t^*\}$. 
Since $\ell^* < t^*$, clearly $\bar D \dominates \bar T$. 
Also $\alpha(\bar T) - \alpha(\bar D) = \alpha_{t^*} - \alpha_{\ell^*}$, since $\alpha_{t^*} \geq \max_{\ell<t^*, \ell \in \bar C_{h^*}} \alpha_\ell$, we know that $\alpha(\bar T) - \alpha(\bar D) \geq 0$. 
If $\bar D \cap C_0 = \emptyset$, then we are done. 
Otherwise, we can replace $\bar T$ by $\bar D$, consider any index in $\bar D \cap C_0$ and apply once more the above discussion.
Every time we are able to obtain a set $\bar D$ with $|\bar D \cap C_0|$ decreased by 1.
At the end we will obtain a set $\bar D$ with the desired property: $\bar D \cap C_0 = \emptyset, \alpha(\bar D) \leq \alpha(\bar T)$, and $\bar D \dominates \bar T$.
This completes the proof of the case $\bar T \cap C_0 \neq \emptyset$.

\smallskip

Therefore from the discussion of the above two cases, we have concluded the proof that the MCI $\alpha^T x \leq \beta$ is a valid inequality for $\conv(K)$. 
\qed \end{proof}

\subsection{Illustrative examples}

In this section, we provide some examples to showcase the novelty of our cut-generating procedure.
We start with a simple example to help understand Algorithm~\ref{alg:mci}.

\begin{example}
\label{exam: explain}
Consider a multi-cover $\{C_1, C_2\} = \{\{1,2,3,4\}, \{1,2,4,5,6,7\}\}$, here $C = [7]$, $C_0 = \{1,2,4\}$, and $\{i_1, i_2, i_3, i_4\} = \{3,5,6,7\}$. 
Next, we compute the coefficients of one particular MCI.

The coefficient $\alpha_7$ is initialized as 1. 
For $t=3$, the coefficient $\alpha_6 \geq \max_{h \in [2]: 6 \in C_h} \max_{\ell \in \bar{C}_h: \ell > 6} \alpha_\ell + 1 = 1$, and we pick $\alpha_6 = 1$. 
Similarly, for $t=2$, we pick $\alpha_5 = 1$. 
For $t=1$, we have $\alpha_3 \geq \max_{h \in [2]: 3 \in C_h} \max_{\ell \in \bar{C}_h: \ell > 2} \alpha_\ell + 1 = 2$, and we pick $\alpha_3 = 3$. 
For $j \in C_0 = \{1,2,4\}$, from Step~\ref{step: modified_coe_intersection}, we have:
\begin{align*}
& \alpha_1 \geq \min_{h \in [2]} \max \{\max_{\ell < 1, \ell \in \bar{C}_h} \alpha_\ell, \sum_{\ell > 1, \ell \in \bar{C}_h} \alpha_\ell + 1\} = 4,\\
& \alpha_2 \geq \min_{h \in [2]} \max \{\max_{\ell < 2, \ell \in \bar{C}_h} \alpha_\ell, \sum_{\ell > 2, \ell \in \bar{C}_h} \alpha_\ell + 1\} = 4,\\
& \alpha_4 \geq \min_{h \in [2]} \max \{\max_{\ell < 4, \ell \in \bar{C}_h} \alpha_\ell, \sum_{\ell > 4, \ell \in \bar{C}_h} \alpha_\ell + 1\} = 3.
\end{align*}
Here we pick $\alpha_j$ as small as possible, for $j \in C_0$. 
Hence $\alpha = (4,4,3,3,1,1,1)$, and from Step~\ref{step: MCI_7}, we have $\beta = \max_{h \in [2]}\alpha(C_h) - 1 = 13$.
Therefore, we obtain the MCI $4x_1 + 4x_2 + 3x_3 + 3x_4 + x_5 + x_6 + x_7 \leq 13$. 
Note that such MCI is not an S-MCI, because at Step~\ref{step: MCI_4} for $t=1$, we selected $\alpha_{i_1} = 3$ instead of the lower bound $2$. 
$\hfill\diamond$
\end{example}

In fact, for some multi-covers with a specific discrepancy family, we are able to write some  MCIs in closed form. 
%We give two examples in the following.
The next example can be seen as a generalization of the multi-cover in Example~\ref{exam: explain}. 

 \begin{example}
 \label{exam: 3}
 Consider $\{C_1, C_2\}$ with discrepancy family $\{\{i_1\}, \{i_2, \ldots, i_t\}\}$, with $i_1 < \ldots < i_{t}$ and $t \geq 3$. Here the family of covers $\{C_1, C_2\}$ is a multi-cover, and the following inequality is one of its associated MCIs:
 {\small
 \begin{equation}
 \label{eq: CAI_1}
 \sum_{i<i_1, i \in C} t x_i +  \sum_{i_1 \leq i < i_2,  i \in C} (t-1) x_i + \sum_{\ell=3}^{t} \sum_{i_{\ell-1} < i < i_\ell,  i \in C} (t-\ell+2) x_i + \sum_{\ell=2}^t x_{i_\ell} + \sum_{i>i_t,  i \in C} x_i \leq \beta,
 \end{equation}
 }
where $\beta$ is the left-hand-side term evaluated at the point $\chi^{C_1} (\text{or } \chi^{C_2})$  minus 1. 
 $\hfill\diamond$
 \end{example}
 
Here is the S-MCI formula for another multi-cover with a specific structure. 
 
\begin{example}
\label{exam: 1_t+1}
Consider $\{C_1, C_2\}$ with discrepancy family $\{\{i_1, i_{t+1}\}, \{i_2, \ldots, i_t\}\}$ for some $t \geq 3$, with $i_1 < \ldots < i_{t+1}$. 
It is simple to verify that $\{C_1, C_2\}$ is a multi-cover, and the obtained S-MCI is:
{\small
\begin{equation}
\label{eq: CAI_2}
\begin{split}
%\alpha^T x  := & 
& \sum_{i<i_1, i \in C} (2t - 1) x_i + \sum_{i_1 \leq i < i_2,  i \in C} (2t - 3) x_i + \sum_{\ell=3}^{t} \sum_{i_{\ell-1} < i < i_\ell,  i \in C} (2t-2\ell+3) x_i + \\
&  + \sum_{\ell=2}^t 2 x_{i_\ell} + \sum_{i_t < i< i_{t+1},  i \in C} 2 x_i + x_{i_{t+1}} + \sum_{i > i_{t+1}, i \in C} 2x_i \leq \beta,
%= \beta.
\end{split}
\end{equation}
}
where $\beta$ is the left-hand-side term evaluated at the point $\chi^{C_1} (\text{or }\chi^{C_2})$ minus 1. 
%where $\alpha$ is the vector associated with the left-hand-side term.
$\hfill\diamond$
\end{example}

Next, we provide another simple example of S-MCI. 

\begin{example}
\label{exam: closed_2}
Consider $\{C_1, C_2, C_3\}$ with discrepancy family $\{\{i_1, i_3\},\{i_1, i_4, i_5\},$ $\{i_2, i_3, i_5\}\}$, with $i_1 < \ldots < i_{5}$. Here the family of covers $\{C_1, C_2, C_3\}$ is a multi-cover, and the obtained S-MCI is:
{\small
\begin{equation}
\label{eq: CAI_3}
\begin{split}
%& \alpha^T x := 
& \sum_{i < i_1, i \in C}  5x_i + \sum_{i_1 \leq i < i_2, i \in C} 3x_i + 2 x_{i_2} +  \sum_{i_2 < i < i_3, i \in C} 3x_i + 2x_{i_3} + \\
& +  \sum_{i_3 < i < i_4, i \in C} 2x_i + x_{i_4} +  \sum_{i_4 < i < i_5, i \in C} 2x_i + x_{i_5} + \sum_{i > i_5, i \in C} 2x_i \leq \beta,
%= \beta.
\end{split}
\end{equation}
}
where $\beta$ is the left-hand-side term evaluated at the point $\chi^{C_1} (\text{or } \chi^{C_2}, \chi^{C_3})$  minus 1. 
%where $\alpha$ is the vector associated with the left-hand-side term.
$\hfill\diamond$
\end{example}

Next, we present some illustrative examples to showcase the utility of MCIs. 
The first example shows that, unlike LCIs or CG cuts, S-MCIs are not
aggregation cuts for the original linear system.
%Recall that, given an integer linear set and a single inequality obtained by aggregating constraints, then an \emph{aggregation cut} is a valid inequality for the integer hull of the set defined by that single inequality, together with the nonnegativity constraints.

\begin{example}
\label{ex3}
Consider the TOMKS $K$ and the multi-cover $\{C_1, C_2\}$ defined in Example~\ref{exm: 1}.
Note that point $\chi^{C_1}$ only violates the first knapsack constraint, and point $\chi^{C_2}$ only violates the second knapsack constraint.  
The associated S-MCI is 
\begin{equation}
    3 x_ 1 + 2 x_2 + x_3 + x_4 + x_5 \leq 5, \label{eq:ex3}
\end{equation}
and~\eqref{eq:ex3} is violated by both $\chi^{C_1}$ and $\chi^{C_2}$. 

One can further check that this S-MCI~\eqref{eq:ex3} is facet-defining for $\conv(K)$. In fact, $\conv(K)$ can be exactly characterized by this S-MCI, along with the bound constraints $0 \leq x_i \leq 1 \ \forall i \in [5]$, and the following four CIs: $x_1 + x_2 + x_5 \leq 2, x_1 + x_2 + x_4 \leq 2, x_1 + x_2 + x_3 \leq 2, x_1 + x_3 + x_4 + x_5 \leq 3$. 

Now consider an aggregation of the knapsack inequalities for $K$ given by inequality $\lambda_1 (19,11,5,4,2)^T x + \lambda_2 (16,10,7,5,3)^T x \leq 31\lambda_1 + 30 \lambda_2$, where $\lambda_1, \lambda_2 \geq 0$. 
For any choice of $\lambda_1 \geq 0, \lambda_2 \geq 0$, it can be verified that $C_1$ and $C_2$ cannot both be covers for the knapsack set given by this single inequality, so any aggregation cut for $K$ can cut off at most one vector among $\chi^{C_1}$ and $\chi^{C_2}$. Therefore, the inequality~\eqref{eq:ex3} is not an aggregation cut.  
In some cases, it may be possible to obtain an S-MCI as a CG cut for the original linear system \emph{augmented} with its minimal cover inequalities.  
In this example, consider the set
{\small
\begin{align*}
K_{CI} : = \{x \in \{0,1\}^5 \mid \ & 19 x_1 + 11x_2 + 5x_3 + 4x_4 + 2x_5 \leq 31, \\ 
& 16 x_1 + 10 x_2 + 7 x_3 + 5 x_4 + 3 x_5  \leq 30,\\
& x_1 + x_2 + x_3 \leq 2, x_1 + x_2 + x_4 \leq 2,\\  
& x_1 + x_2 + x_5 \leq 2, x_1 + x_3 + x_4 + x_5 \leq 3 \}.
\end{align*}
}
The inequality~\eqref{eq:ex3} is indeed a CG cut with respect to $K_{CI}$, as shown by multipliers $\frac{1}{12} \cdot (19, 11, 5,4,2) + \frac{1}{4} \cdot (1,1,1,0,0) + \frac{1}{3} \cdot (1,1,0,1,0) + \frac{1}{2} \cdot (1,1,0,0,1) + \frac{1}{3} \cdot (1,0,1,1,1) = (3,2,1,1,1), \frac{1}{12} \cdot 31 + \frac{1}{4} \cdot 2 + \frac{1}{3} \cdot 2 + \frac{1}{2} \cdot 2 + \frac{1}{3} \cdot 3 = 5.75.$ Hence $(3,2,1,1,1)^T x \leq \lfloor 5.75 \rfloor = 5$ is a CG cut for $K_{CI}$. 
$\hfill\diamond$
\end{example}

Example~\ref{ex3} demonstrates that MCIs can be obtained from multiple knapsack sets simultaneously.  
Specifically, the inequality~\eqref{eq:ex3} is facet-defining for $\conv(K)$, but it is neither valid for $\{x \in \{0,1\}^5 \mid 19 x_1 + 11x_2 + 5x_3 + 4x_4 + 2x_5 \leq 31 \}$ nor for $\{x \in \{0,1\}^5 \mid 16 x_1 + 10 x_2 + 7 x_3 + 5 x_4 + 3 x_5 \leq 30\}$.  Example~\ref{ex3} also shows that an S-MCI can be a CG cut for the linear system given by the original knapsack constraints along with all their minimal cover inequalities.  
In the next example, we will see that this is not always the case.

\begin{example}
\label{ex4}
Consider the following TOMKS: 
{\small
\begin{align*}
K: = \{x \in \{0,1\}^8 \mid \ & 28 x_1 + 24x_2+20x_3+19x_4+15x_5+10x_6+7x_7+6x_8 \leq 96, \\
& 27x_1+24x_2+21x_3+19x_4+13x_5+12x_6+7x_7+4x_8 \leq 96\}.
\end{align*}
}
Define covers $C_1 = \{2,3,4,5,6,7,8\}$, $C_2 = \{1,3,4,5,6,8\},$ $C_3 = \{1,2,3,5,6\}$, $C_4 = \{1,2,3,5,7,8\}$.
We have $C = [8]$, $C_0 = \{3,5\}$, and the discrepancy family is $\mathcal{D}(\C) = \{\{2,4,6,7,8\}, \{1,4,6,8\}, \{1,2,6\}, \{1,2,7,8\}\} =: \{D_1, D_2, D_3, D_4\}$.

First, we verify that $\C$ is a multi-cover. For any set $T \subseteq C \setminus C_0$ and $T \notin \mathcal{D}(\C)$, if $1 \in T$, $|T| = 2$, then $T$ is clearly dominated by either $D_2, D_3$ or $D_4$. 
If $1 \in T$, $|T| = 3$, then either $T \dominates D_3$ or $D_3 \dominates T$. If $1 \in T$, $|T| = 4$, then $T$ must be comparable with $D_2$ or $D_3$.
If $1 \in T$, $|T| = 5$, then $T \dominates D_1$. If $1 \notin T,$ then clearly $D_1 \dominates T$ since $T \subseteq D_1$. Hence we have shown that for any $T \subseteq C \setminus C_0$ and $T \notin \mathcal{D}(\C)$, $T$ must be comparable with some set in $\mathcal{D}(\C)$. Therefore $\C$ is a multi-cover.

When  Algorithm~\ref{alg:mci} is applied to $\C$, we obtain the S-MCI $\alpha^T x \leq \beta$ given by
\begin{equation}
    4x_1+3x_2+3x_3+2x_4+3x_5+2x_6+x_7+x_8 \leq 14, \label{eq:ex4}
\end{equation}
and it can be shown that \eqref{eq:ex4} is facet-defining for $\conv(K)$. 

Consider the linear system given by all the minimal cover inequalities for $K$, as well as the original two linear constraints. We refer to this linear system as $K_{CI}$, which consists of 30 inequalities. 
Solving $\max\{\alpha^T x \mid x \in K_{CI}\}$ gives optimal value $15.307$, so the corresponding CG cut with respect to $K_{CI}$ with the same left-hand-side coefficient vector $\alpha$ is $\alpha^T x \leq 15$, which is weaker than inequality \eqref{eq:ex4}. 
$\hfill\diamond$
\end{example}

Even when the cover-family consists of covers all coming from the same knapsack inequality, the MCI can produce interesting inequalities. In the next example, we show an MCI that cannot be obtained as an LCI, regardless of the lifting order.

\begin{example}[Example 3 in \cite{letchford2019lifted}]
\label{exmp: 3}
Let $K := \{x \in \{0,1\}^5 \mid 10 x_1 + 7 x_2 + 7 x_3 + 4 x_4 + 4 x_5 \leq 16\}$, and consider the multi-cover $\C: = \{ \{1, 3\}, \{1, 4, 5\}, \{2, 3, 5\}\}$. 
From inequality \eqref{eq: CAI_3} of Example~\ref{exam: closed_2}, we know that the corresponding S-MCI is
\begin{equation}
    3x_1 + 2x_2 + 2x_3 + x_4 + x_5 \leq 4. \label{eq:ex5}
\end{equation}
The inequality~\eqref{eq:ex5} is the same inequality produced by the new lifting procedure described in \cite{letchford2019lifted}, and the authors of \cite{letchford2019lifted} state that \eqref{eq:ex5} is both a facet of $\conv(K)$ and cannot be obtained from any cover inequality by standard sequential lifting methods, regardless of the lifting order.
$\hfill \diamond$
\end{example}

Next, we discuss how the well-known \emph{$(1,k)$-configuration inequality} can be derived from the MCI.

\begin{proposition}
\label{prop: 1-k-AMCI}
Consider a knapsack set $K = \{x \in \{0,1\}^n \mid a^T x \leq b\}$, a nonempty subset $Q \subseteq [n]$, and $t \in [n] \setminus Q$. 
Assume that $\sum_{i \in Q} a_i \leq b$ and that $H \cup \{t\}$ is a minimal cover for all $H \subset Q$ with $|H| = k$.
Then for any $T(r) \subseteq Q$ with $|T(r)| = r$, $k \leq r \leq |Q|$, the $(1,k)$-configuration inequality
$$
(r-k+1) x_t + \sum_{j \in T(r)}x_j \leq r
$$
can be obtained from an MCI associated with two covers of $K$.
\end{proposition}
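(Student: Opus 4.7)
The plan is to exhibit two covers $C_1, C_2$ of $K$ forming a multi-cover whose associated MCI, under a suitable choice of the coefficients permitted by Algorithm~\ref{alg:mci}, is exactly the $(1,k)$-configuration inequality. Writing $T(r) = \{j_1 < j_2 < \dots < j_r\}$, the natural candidates take the form $C_h = \{t\} \cup H_h$ with $H_h \subseteq T(r)$ and $|H_h| = k$, so that each $C_h$ is a minimal cover by the standing hypothesis on $Q$, $k$, and $t$, and chosen so that $H_1 \cup H_2 = T(r)$; thus $C_1 \cup C_2 = \{t\} \cup T(r)$ coincides with the support of the target inequality.

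First I would verify that $\{C_1, C_2\}$ is a multi-cover in the sense of Definition~\ref{def:multi-cover}. The discrepancy family here is $\mathcal{D}(\C) = \{H_1 \setminus H_2,\, H_2 \setminus H_1\}$, a pair of disjoint subsets of $T(r)$ whose union is $T(r) \setminus (H_1 \cap H_2)$, and a short case analysis on $|T|$, relying on Lemma~\ref{lem: domination_minus_intersection} together with the total order of $T(r)$, shows that every other subset of this union is comparable with some member of $\mathcal{D}(\C)$ under $\dominates$. Next I would trace Algorithm~\ref{alg:mci}: the set $C_0 = \{t\} \cup (H_1 \cap H_2)$ is explicit, so the lower bounds at Step~\ref{step: MCI_4} and Step~\ref{step: modified_coe_intersection} can be evaluated directly, and I would use the flexibility of the algorithm to set $\alpha_j = 1$ for every $j \in T(r)$ and to raise $\alpha_t$ up to $r-k+1$. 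Step~\ref{step: MCI_7} then yields
\[
 \beta \;=\; \max_h \alpha(C_h) - 1 \;=\; (r-k+1) + k - 1 \;=\; r,
\]
which is precisely the right-hand side of the $(1,k)$-configuration inequality.

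The hardest part is justifying that the uniform choice $\alpha_j = 1$ for $j \in T(r)$ is actually allowed by the lower bounds produced at Step~\ref{step: MCI_4} and Step~\ref{step: modified_coe_intersection}: those bounds depend subtly on the relative ordering of the indices in $H_1 \setminus H_2$ and $H_2 \setminus H_1$, and forcing them all to equal $1$ requires a delicate interlocking between the chain order on $T(r)$ and the choice of $H_1, H_2$. Arranging this is where the total-order structure of $K$ enters in an essential way, and it is also where the hypothesis that $H \cup \{t\}$ is a minimal cover for every size-$k$ subset $H \subset Q$ is used: minimality guarantees that each $\{t\} \cup H_h$ has $\alpha$-weight exactly $r+1$, so that no $C_h$ makes $\max_h \alpha(C_h)$ exceed $r+1$ and shift $\beta$ above $r$. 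This accounting is the bulk of the technical work in the proof.
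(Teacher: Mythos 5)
There is a genuine gap, and it is exactly at the point you flag as ``the hardest part'': the uniform choice $\alpha_j = 1$ for all $j \in T(r)$ is \emph{never} permitted by Algorithm~\ref{alg:mci} for your choice of covers. With $C_1 = \{t\}\cup H_1$ and $C_2 = \{t\}\cup H_2$, the index $t$ lies in $C_0$ and the discrepancy family is $\{H_1\setminus H_2,\, H_2\setminus H_1\}$, two nonempty subsets of $T(r)$ (they must both be nonempty, since $H_1 = H_2$ forces $C = C_1 \neq \{t\}\cup T(r)$ for $r>k$). Let $m_1 = \max(H_1\setminus H_2)$ and $m_2 = \max(H_2\setminus H_1)$ and say $m_1 < m_2$. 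Then at Step~\ref{step: MCI_4}, since $m_1 \in C_1$ and $m_2 \in \bar C_1$ with $m_2 > m_1$, the algorithm forces $\alpha_{m_1} \ge \alpha_{m_2} + 1 \ge 2$; the algorithm only allows raising coefficients above these bounds, never lowering them, so some variable of $T(r)$ necessarily receives a coefficient at least $2$ and the resulting MCI is not the $(1,k)$-configuration inequality. There is also a secondary obstruction: $|H_1 \cup H_2| \le 2k$, so the requirement $H_1\cup H_2 = T(r)$ already fails whenever $r > 2k$. Minimality of the covers does not rescue either issue.

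The paper's proof avoids both problems by not working inside $K$ at all. It builds an auxiliary TOMKS $K' \subseteq \{0,1\}^{n+1}$ in which the weight $a_t$ is duplicated at a new index $t+1$, and takes the covers $C'_1 = \{t\}\cup\{j_{r-k+1}+1,\dots,j_r+1\}$ and $C'_2 = \{t+1\}\cup\{j_1+1,\dots,j_r+1\}$. Because the second cover uses the duplicate $t+1$ instead of $t$, the index $t$ is \emph{not} in $C_0$, and the discrepancy family becomes $\{\{t\},\{t+1,j_1+1,\dots,j_{r-k}+1\}\}$ --- a singleton at the smallest index versus all the remaining discrepancy indices. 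This is precisely the structure of Example~\ref{exam: 3}, whose closed-form MCI assigns coefficient $r-k+1$ to $x_t$ and coefficient $1$ to everything else, with right-hand side $r$. Projecting out $x_{t+1}$ (which is valid since $K$ is the projection of $K'$) then yields the $(1,k)$-configuration inequality. If you want to salvage your argument, you need this duplication-and-projection step; a direct two-cover construction within $K$ cannot produce the required coefficient pattern.
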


\begin{proof}
When $r = k$, then the inequality in the statement reduces to a CI. 
Hence we assume $r > k$. 
W.l.o.g. we assume $a_1  \geq \ldots \geq a_n$. Consider a new knapsack set $K': = \{x \in \{0,1\}^{n+1} \mid a^{\prime T} x \leq b\}$, with $a'_i = a_i \ \forall i \leq t$, $a'_{t+1} = a_t$, $a'_j = a_{j-1} \ \forall j > t+1.$ 
Then clearly we have $a'_1 \geq \ldots \geq a'_{n+1}$. 

Since for any $H \subset Q$ with $|H| = k$, $H \cup \{t\}$ is a cover for $K$, we know that for any $j \in Q$, the set $Q  \cup \{t\} \setminus \{j\}$ is also a cover for $K$, i.e., $\sum_{i \in Q} a_i - a_j + a_t > b$. 
From the assumption that $\sum_{i \in Q}a_i \leq b$, we have $a_t > a_j$, or equivalently, $t < j$ for any $j \in Q$. 
Now for any $T(r) \subseteq Q$ with $|T(r)| = r$, $k \leq r \leq |Q|$, let $T(r): = \{j_1, \ldots, j_r\}$ with $j_1 < \ldots < j_r$, so we have $t < j_1$ from above. 
Then consider $C_1: = \{t\} \cup \{j_{r-k+1}, \ldots, j_r\}$, $C'_1: = \{t\} \cup \{j_{r-k+1} + 1, \ldots, j_r + 1\}$, and $C'_2 : = \{t+1\} \cup \{j_{1} + 1, \ldots, j_r + 1\}$. 
Since $\{j_{r-k+1}, \ldots, j_r\} \subset Q$ with $|\{j_{r-k+1}, \ldots, j_r\}| = k$, we know that $C_1$ is a cover for $K$, 
so $C'_1$ is a cover for $K'$ from the construction of $K'$. 
Furthermore $C'_2$ is a cover for $K'$ since $a'_{t+1} = a'_t$. Note that the discrepancy family of $\{C'_1, C'_2\}$ is $\{\{t\}, \{t+1, j_1 + 1, \ldots, j_{r-k} + 1\}\}$, thus by Example~\ref{exam: 3}, the inequality
\begin{equation}
\label{eq: mci_1k}
(r-k+1) x_t + x_{t+1} + \sum_{\ell=1}^r x_{j_\ell + 1} \leq r
\end{equation}
is an MCI for $K'$ associated with $\{C'_1, C'_2\}$.
Since $K$ can be obtained by projecting out of the variable $x_{t+1}$ of $K'$, we project out the $x_{t+1}$ variable in \eqref{eq: mci_1k} and relabel the indices.
We obtain the following (1,k)-configuration inequality for $K$:
$$
(r-k+1) x_t + \sum_{\ell=1}^r x_{j_\ell} = (r-k+1) x_t + \sum_{j \in T(r)}x_j \leq r.
$$
\qed \end{proof}

\subsection{Extended MCI}
\label{subsec: extended_MCI}
In this section, we propose a procedure to strengthen, or extend, an MCI in a similar fashion to a well-known procedure for CIs \cite{balas1975facets}.  We call the strengthened inequalities \emph{extended MCI (E-MCI)}.

For any set $C \subseteq [n]$, let $\min(C)$ denote the least element in $C$.
Recall that for a cover $C \subseteq [n]$, its corresponding \emph{extended cover inequality} ECI is simply:
\begin{equation}
\label{eq: normal_CI}
x([\min(C)-1] \cup C) \leq |C| - 1,
\end{equation}
where the coefficient of each index $i$ that is less than $\min(C)$ is increased from 0 to 1. 
For a multi-cover $\{C_1, \ldots, C_k\}$ one can perform a similar extension: let $\alpha^T x \leq \beta$ be an MCI of this multi-cover, then the inequality $x([\min(C) - 1]) + \alpha^T x \leq \beta$ is also valid for $K$. 
However, the improved coefficients in general can be larger than 1, and the indices of the variables included in the inequality do not have to be limited in the set $[\min(C) - 1]$. 
Before introducing the formal definition for our extended inequality, for any $C_h$ and vector $\alpha,$ we denote by $\alpha_{2, C_h}$ the second least number in the multiset $\{\alpha_i, i \in C_h\}$, which allows duplication. 
In cases where multiple minimum numbers exist in $\{\alpha_i, i \in C_h\}$, then $\alpha_{2, C_h}$ is simply $\min_{i \in C_h} \alpha_i$. 

\begin{definition}[Extended MCI]
Given an MCI $\alpha^T x \leq \beta$ generated from a multi-cover $\{C_1, \ldots, C_k\}$. Then the following inequality
\begin{equation}
\label{eq: e-MCI}
\sum_{i \in [n] \setminus C} \big(\max_{h \in [k]: i < \min(C_h)} \alpha_{2, C_h} \big)  x_i + \alpha^T x \leq \beta
\end{equation}
is called an \emph{extended MCI} (E-MCI).
\end{definition}
Same as the remark we made after Algorithm~\ref{alg:mci}, here for an index $i$ with $\{h \in [k] \mid i < \min(C_h)\} = \emptyset$, the maximum over this empty set is set to be zero. 
It is easy to observe that, when \eqref{eq: e-MCI} is applied to the normal CI, this inequality  gives the same ECI as \eqref{eq: normal_CI}. 

Next, we prove that inequality \eqref{eq: e-MCI} is indeed a valid inequality for $K$.
\begin{theorem}
For a TOMKS $K$ and an MCI $\alpha^T x \leq \beta$ generated from a multi-cover $\{C_1, \ldots, C_k\}$, the E-MCI \eqref{eq: e-MCI} is valid for $K$. 
\end{theorem}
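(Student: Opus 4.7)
The plan is to prove validity by induction on $|T \setminus C|$. I will show the stronger claim: every $T \subseteq [n]$ with
\[
L(T) := \sum_{i \in T \setminus C} c_i + \alpha(T \cap C) \geq \beta + 1,
\]
where $c_i := \max_{h \in [k]: i < \min(C_h)} \alpha_{2, C_h}$ is the coefficient of $x_i$ for $i \in [n] \setminus C$, must be a cover for $K$. Since $\alpha$ and $\beta$ are integer-valued, this is equivalent to validity of the E-MCI. The base case $T \subseteq C$ is immediate: $\alpha(T) = L(T) \geq \beta + 1$ combined with Theorem~\ref{theo: MCI_antichain} gives that $T$ is a cover.

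For the inductive step, assume $|T \setminus C| \geq 1$. If every $i \in T \setminus C$ satisfies $c_i = 0$, then $\alpha(T \cap C) \geq \beta + 1$ forces $T \cap C$ to be a cover by Theorem~\ref{theo: MCI_antichain}, and hence so is its superset $T$. Otherwise, I pick $i \in T \setminus C$ with $c_i > 0$ and let $h_i \in [k]$ attain the defining maximum, giving $i < \min(C_{h_i})$ and $c_i = \alpha_{2, C_{h_i}}$. The critical observation is that at most one element of $C_{h_i}$---the unique $\alpha$-minimizer $e^*$ on $C_{h_i}$, when the minimum is strict---has $\alpha$-value strictly less than $c_i$. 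The argument then splits into two branches. If there exists $e \in C_{h_i} \setminus T$ with $\alpha_e \geq c_i$, I set $T' := (T \setminus \{i\}) \cup \{e\}$; since $i < \min(C_{h_i}) \leq e$, the map fixing $T \cap T'$ and sending $e \mapsto i$ establishes $T \dominates T'$. A direct computation yields $L(T') = L(T) - c_i + \alpha_e \geq L(T) \geq \beta + 1$ and $|T' \setminus C| = |T \setminus C| - 1$, so the induction hypothesis gives that $T'$ is a cover, and domination transfers the cover property to $T$. If instead no such $e$ exists, then $C_{h_i} \setminus T \subseteq \{e^*\}$: either $C_{h_i} \subseteq T$, so $T$ is a cover directly, or $C_{h_i} \setminus T = \{e^*\}$, in which case $S := (C_{h_i} \setminus \{e^*\}) \cup \{i\} \subseteq T$ and the map $e^* \mapsto i$ (with $i < e^*$) shows $S \dominates C_{h_i}$, making $S$ and hence $T$ a cover.

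The main obstacle, and the reason the definition of $c_i$ uses the \emph{second}-smallest value $\alpha_{2, C_h}$, lies in balancing the two branches of the inductive step. A strictly larger choice (for instance, the third-smallest value) would leave up to two elements of $C_{h_i}$ with $\alpha$-value below $c_i$, and in the non-swap branch both could sit outside $T$ while we have only the single substitute $i$ at our disposal; no set of the form $(C_{h_i} \setminus \{\ldots\}) \cup \{i\}$ could then dominate $C_{h_i}$. Choosing $c_i = \alpha_{2, C_h}$ isolates any obstruction to a single element $e^*$, which $i$ alone absorbs via the substitution $e^* \mapsto i$, and this is precisely what makes the induction close.
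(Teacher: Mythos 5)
Your proof is correct and follows essentially the same route as the paper's: an induction that peels off indices of $[n]\setminus C$ one at a time, with the same dichotomy between swapping the chosen index $i$ for a missing element $e\in C_{h_i}$ with $\alpha_e\geq\alpha_{2,C_{h_i}}$ (using $i<\min(C_{h_i})\leq e$ to preserve both the violation and the dominance) and, when all such elements already lie in $T$, showing directly that $T$ dominates $C_{h_i}$ because at most one element of $C_{h_i}$ has $\alpha$-value below $\alpha_{2,C_{h_i}}$. The only differences are cosmetic (inducting on $|T\setminus C|$ rather than on $|S\cap E|$, and making the role of the second-smallest coefficient explicit).
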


\begin{proof}
In order to show that inequality \eqref{eq: e-MCI} is valid for $K$, it suffices to show that for any $S \subseteq [n]$ whose incidence vector $\chi^S$ violates the inequality \eqref{eq: e-MCI}, then $S$ must be a cover for $K$. 
Let $E: = \{i \in [n] \setminus C \mid \exists h \in [k] \ \st \ i < \min(C_h)\}$.
The proof is by induction on $|S \cap E|$. 

When $|S \cap E| = 0$, $\chi^S$ violates inequality \eqref{eq: e-MCI} if and only if $\chi^S$ violates the original MCI $\alpha^T x \leq \beta$, which implies that $S$ is a cover for $K$ since we know that $\alpha^T x \leq \beta$ is valid for $K$. 
Now we assume that when $|S \cap E| = N (N<|E|)$ our statement is true. 
Consider the case where $|S \cap E| = N+1$.
We want to show that $\chi^S$ is not in $K$. 
Arbitrarily pick $i^* \in S \cap E$, with $\max_{h \in [k]: i^* < \min(C_h)} \alpha_{2, C_h} = \alpha_{2, C_{h^*}}$ for some $h^* \in [k]$.
We consider two cases.
\begin{enumerate}
\item If $\{i \in C_{h^*} \mid \alpha_i \geq \alpha_{2, C_{h^*}}\} \subseteq S$, by definition of $\alpha_{2, C_{h^*}}$, we know that $\{i \in C_{h^*} \mid \alpha_i < \alpha_{2, C_{h^*}}\}$ has at most one element, and that element is larger than $i^*$ since $i^* < \min(C_h^*)$. Therefore, we obtain 
\begin{align*}
S & \supseteq \{i^*\} \cup \{i \in C_{h^*} \mid \alpha_i \geq \alpha_{2, C_{h^*}}\} \\
&  \dominates \{i \in C_{h^*} \mid \alpha_i < \alpha_{2, C_{h^*}}\} \cup \{i \in C_{h^*} \mid \alpha_i \geq\alpha_{2, C_{h^*}}\} \\
&= C_{h^*},
\end{align*}
which indicates that $S$ must also be a cover for $K$. 
\item 
If there exists $j^* \in \{i \in C_{h^*} \mid \alpha_i \geq \alpha_{2, C_{h^*}}\}$ and $j^* \notin S$, then we have $\alpha_{j^*} \geq \alpha_{2, C_{h^*}}$, which is the coefficient of $x_{i^*}$ in equality \eqref{eq: e-MCI}.
Consider $S': = S \setminus \{i^*\} \cup \{j^*\}$, which is dominated by $S$ since $i^* < \min(C_h^*)$. 
Since $\chi^S$ violates inequality \eqref{eq: e-MCI}, we know that $\chi^{S'}$ also violates inequality \eqref{eq: e-MCI}. 
Note that $|S' \cap E| = |S \cap E| - 1 = N$, so by the inductive hypothesis, we know that $S'$ must be a cover for $K$. 
Since $S \dominates S'$, we obtain that $S$ must also be a cover for $K$.
\end{enumerate}
By induction, we conclude the proof.
\qed \end{proof}

The next example is obtained from Example~\ref{exmp: 3} by simply adding two more variables.
\begin{example}
Let $K: = \{x \in \{0,1\}^7 \mid 10 x_1 + 10 x_2 + 7 x_3 + 7 x_4 + 7 x_5 + 4 x_6 + 4 x_7 \leq 16\}$, and we consider the multi-cover $\C: = \{ \{2, 5\}, \{2, 6, 7\}, \{4, 5, 7\}\}$. From Example~\ref{exam: closed_2}, we have the following S-MCI: 
$$
3x_2 + 2x_4 + 2x_5 + x_6 + x_7 \leq 4.
$$
Now we lift the coefficients of indices $1$ and $3$. For index $3$, note that $\{h \in [3] \mid 3 < \min(C_h)\} = \{3\}$,
and the second smallest number in $\{\alpha_4, \alpha_5, \alpha_7\}$ is 2,
so the extended coefficient for $x_3$ is 2.
Similarly, for index $1$ we have $\{h \in [3] \mid 1 < \min(C_h)\} = \{1,2,3\},$ and $\alpha_{2, C_1} = 3, \alpha_{2, C_2} = 1, \alpha_{2, C_3} = 2$, so the extended coefficient for $x_1$ is 3. Therefore, we obtain the following E-MCI:
\begin{equation}
3x_1 + 3x_2 + 2x_3 + 2x_4 + 2x_5 + x_6 + x_7 \leq 4.
\end{equation}
In fact, this inequality turns out to be the only non-trivial facet-defining inequality of $\conv(K)$ that is not an ECI. 
$\hfill\diamond$
\end{example}

We give another interesting example with only one knapsack constraint.
\begin{example}
Let $K: = \{x \in \{0,1\}^6 \mid 66 x_1 + 61 x_2 + 54 x_3 + 33 x_4 + 21 x_5 + 16 x_6 \leq 130\}$. 
From the multi-cover $\{C_1, C_2\}: = \{\{2,3,6\},\{2,4,5,6\}\}$ we obtain an MCI: $3 x_2 + 2x_3 + x_4 + x_5 + x_6 \leq 5$. 
Note that $\alpha_{2, C_1} = 2$, $\alpha_{2, C_2} = 1$, so from \eqref{eq: e-MCI} we obtain the corresponding E-MCI $2x_1 + 3 x_2 + 2x_3 + x_4 + x_5 + x_6 \leq 5$, which is also a facet-defining inequality for $\conv(K)$. 
$\hfill\diamond$
\end{example}

\section{Facet-defining Inequalities}
\label{sec:facet}

In this section, we provide a sufficient condition for a S-MCI to define a facet of $\conv(K)$. We also given a family of instances in which all the non-trivial facet-defining inequalities of $\conv(K)$ are given by MCIs.
% The conditions that we give are restrictive and tend to have little practical use.
% However, these results highlight the potential interest of MCIs, since
% %but the point of presenting these results is, 
% %instead of trying to propose a complete characterization for the MCIs to be facet-defining or fully characterize $\conv(K)$, which is technically almost impossible, here we want to
% %highlight that, 
% there exist particular scenarios where the MCIs are in fact facet-defining, and able to fully describe $\conv(K)$.
%\note{how to rephrase it better?}

Given a multi-cover $\{C_1,\dots,C_k\}$ and an S-MCI $\alpha^T x \leq \beta$, we denote by $\{i_{t,1}, \ldots, i_{t, n_t}\}: = \{i \in C \setminus C_0  \mid \alpha_i = t\}$ the set of indices whose S-MCI coefficients are all $t$, where $i_{t, 1} < \ldots < i_{t, n_t}$.

\begin{theorem}
\label{theo: facet_condition_1}
Let $\{C_1,\dots,C_k\}$ be a multi-cover for a TOMKS $K$, and let $\alpha^T x \leq \beta$ be the corresponding S-MCI. 
Assume that the following conditions hold:
\begin{enumerate}
\item $C_0 = \emptyset$; \label{cond: 1}
\item For each $h \in [k]$, cover $C_h$ is a minimal cover; \label{cond: 2}
\item For any $t = 2, \ldots, \max_{i=1}^n \alpha_i$, there exists some $ i_{t-1, \ell_t} \notin C_{h_t} \in  \{C_h\}_{h=1}^k$ with
$i_{t,1} \in C_{h_t}$ and $i_{1,n_1} \in C_{h_t}$, such that
$C_{h_t} \cup \{i_{t-1, \ell_t}\} \setminus \{i_{t, n_t}\}$ is not a cover; \label{cond: 3}
\item There exists some $C_{h_1} \in \{C_h\}_{h=1}^k$, such that $i_{1, 1} \in C_{h_1}$ and for any $i' \notin C$, $C_{h_1} \cup \{i'\} \setminus \{i_{1, 1}\}$ is not a cover. \label{cond: 4}
\item For any $t = 1, \ldots, \max_{i=1}^n \alpha_i$, $\alpha(C_{h_t}) = \beta + 1$. \label{cond: 5}
\end{enumerate}
Then $\alpha^T x \leq \beta$ is a facet-defining inequality of $\conv(K)$.
\end{theorem}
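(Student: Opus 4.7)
The plan is to invoke the standard facet criterion: let $\gamma^T x \leq \gamma_0$ be any valid inequality for $\conv(K)$ that is satisfied with equality by every point of the face $F = \{x \in \conv(K) \mid \alpha^T x = \beta\}$, and show that $(\gamma, \gamma_0) = \lambda(\alpha, \beta)$ for some scalar $\lambda$. Full-dimensionality of $\conv(K)$ follows from conditions~(\ref{cond: 2}) and~(\ref{cond: 4}), since these produce feasible 0-1 vectors with each coordinate free to take both values.

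The core of the argument is to exhibit three complementary families of points lying in $F$:
\emph{(A)} for every tight cover $C_{h_t}$ (whose existence for $t = 1,\ldots,\max_i \alpha_i$ is guaranteed by condition~(\ref{cond: 5})) and every index $j \in C_{h_t}$ with $\alpha_j = 1$, the vector $\chi^{C_{h_t} \setminus \{j\}}$ is feasible by condition~(\ref{cond: 2}) and satisfies $\alpha(C_{h_t}) - 1 = \beta$;
\emph{(B)} for every coefficient level $t \geq 2$, condition~(\ref{cond: 3}) ensures $\chi^{(C_{h_t} \cup \{i_{t-1,\ell_t}\}) \setminus \{i_{t,n_t}\}}$ is feasible, with $\alpha$-value $\beta + 1 + (t-1) - t = \beta$;
\emph{(C)} for every index $i' \notin C$, condition~(\ref{cond: 4}) gives the feasible point $\chi^{(C_{h_1} \cup \{i'\}) \setminus \{i_{1,1}\}}$ with $\alpha$-value $\beta$, using $\alpha_{i'} = 0$ since $\supp(\alpha) = C$.

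Writing $\gamma^T x = \gamma_0$ at each of these face points and subtracting pairs of equations will collapse into elementary relations on the entries of $\gamma$. Pairing each (C)-point with $\chi^{C_{h_1} \setminus \{i_{1,1}\}}$ forces $\gamma_{i'} = 0$ for all $i' \notin C$. Pairing two (A)-points drawn from the same $C_{h_t}$ for coefficient-$1$ indices $j, j' \in C_{h_t}$ gives $\gamma_j = \gamma_{j'}$; because condition~(\ref{cond: 3}) places $i_{1, n_1}$ inside every $C_{h_t}$ for $t \geq 2$, these equalities concatenate across the tight covers, letting us define $\lambda := \gamma_{i_{1, n_1}}$ and propagate $\gamma_j = \lambda$ to every coefficient-$1$ index that is reached by such pairings. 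Pairing a (B)-point with $\chi^{C_{h_t} \setminus \{i_{1, n_1}\}}$ then yields the key recursion $\gamma_{i_{t, n_t}} = \gamma_{i_{t-1,\ell_t}} + \lambda$, which by induction on $t$ delivers $\gamma_{i_{t, n_t}} = t \lambda = \alpha_{i_{t, n_t}} \lambda$. A final sweep, using (A)-pairings together with the explicit S-MCI formula at Step~\ref{step: MCI_4} of Algorithm~\ref{alg:mci} and the dominance tools of Section~\ref{sec:dominance}, propagates the scaling $\gamma_j = \alpha_j \lambda$ to every coefficient-$t$ index of $C$. Finally, evaluating $\gamma^T \chi^{C_{h_1} \setminus \{i_{1,1}\}} = \gamma_0$ yields $\gamma_0 = \lambda \beta$.

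The main obstacle is the very last step: transferring the scaling $\gamma_j = t\lambda$ from the single anchor $i_{t, n_t}$ to \emph{all} coefficient-$t$ indices, in particular those that do not appear in the distinguished cover $C_{h_t}$. This requires careful bookkeeping in which the recursive formula for the S-MCI coefficients (Steps~\ref{step: MCI_4} and~\ref{step: modified_coe_intersection}) is matched to chains of swap-based face points of the form $\chi^{(C_{h} \cup \{p\}) \setminus \{q\}}$ built via Lemmas~\ref{lem: domination_minus_intersection} and~\ref{lem: domination_dual}; the \emph{simple} choice of coefficients (equality rather than inequality in those steps) is precisely what ensures the ratio $\gamma_j / \alpha_j$ can be pinned down for every $j \in C$.
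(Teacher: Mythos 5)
Your plan assembles essentially the right ingredients -- tight feasible points obtained by deleting a coefficient-$1$ index from a tight cover, the swap points $\chi^{(C_{h_t}\cup\{i_{t-1,\ell_t}\})\setminus\{i_{t,n_t}\}}$, and the pairing with $\chi^{(C_{h_1}\cup\{i'\})\setminus\{i_{1,1}\}}$ to kill the coefficients outside $C$ -- and the recursion $\gamma_{i_{t,n_t}}=\gamma_{i_{t-1,\ell_t}}+\lambda$ you extract is exactly the relation the paper uses. But the step you yourself flag as ``the main obstacle'' is not resolved, and it is the heart of the proof: your induction $\gamma_{i_{t,n_t}}=t\lambda$ already needs $\gamma_{i_{t-1,\ell_t}}=(t-1)\lambda$ for an \emph{arbitrary} coefficient-$(t-1)$ index $i_{t-1,\ell_t}$, so the propagation to all coefficient-$t$ indices cannot be deferred to ``a final sweep.'' Moreover, the mechanism you propose for it -- pairings of coefficient-$1$ deletions from family (A) plus Lemmas~\ref{lem: domination_minus_intersection} and~\ref{lem: domination_dual} -- does not produce equalities among two coefficient-$t$ indices for $t\geq 2$: the (A)-points only ever differ in coefficient-$1$ coordinates.

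The missing idea is a structural consequence of the S-MCI construction: since the coefficients at Step~\ref{step: MCI_4} are taken with equality, $i_{t,\ell}\in C_{h'}$ forces $\{i_{t,\ell},\dots,i_{t,n_t}\}\subseteq C_{h'}$; hence condition~\ref{cond: 3}'s hypothesis $i_{t,1}\in C_{h_t}$ puts \emph{every} coefficient-$t$ index inside the single cover $C_{h_t}$. One then uses the whole family of swap points $\chi^{(C_{h_t}\cup\{i_{t-1,\ell_t}\})\setminus\{i_{t,\ell}\}}$ for $\ell=1,\dots,n_t$ (each is feasible because $i_{t,\ell}\le i_{t,n_t}$ and the columns are totally ordered, so feasibility of the $\ell=n_t$ case implies the rest), which immediately gives $\gamma_{i_{t,1}}=\dots=\gamma_{i_{t,n_t}}$ and closes the induction. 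Without this observation and the enlarged family of swap points, the argument does not pin down $\gamma_j$ for coefficient-$t$ indices $j\neq i_{t,n_t}$, so as written the proposal has a genuine gap.
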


The proof of this theorem is deferred to Appendix~\ref{subsec: proof_facet_condition_1}.

\begin{example}
Consider the TOMKS and the multi-cover in Example~\ref{exmp: 3}.
We have $C_1 = \{1, 3\}$, $C_2 = \{1, 4, 5\}$, $C_3 = \{2, 3, 5\}$, Then the corresponding S-MCI $\alpha^T x \leq \beta$ is $3x_1 + 2x_2 + 2x_3 + x_4 + x_5 \leq 4$. Here we have $i_{1,1} = 4, i_{1,2} = 5, i_{2,1} = 2, i_{2,2} = 3, i_{3,1} = 1$. 

Clearly condition~\ref{cond: 1} in Theorem~\ref{theo: facet_condition_1} holds.
Since $\alpha(C_1) - \alpha_3 = 10 \leq 16$, $\alpha(C_2) - \alpha_5 = 14 \leq 16$, $\alpha(C_3) - \alpha_5 = 14 \leq 16$, condition~\ref{cond: 2} holds as well.
For $t = 2,$ let $C_{h_2} = C_3$, then $i_{1,1} \notin C_{h_2}, i_{1,2} \in C_{h_2}, i_{2,1} \in C_{h_2}$, and $C_{h_2} \cup \{i_{1,1}\} \setminus \{i_{2,2}\} = \{2, 4, 5\}$ is not a cover. For $t = 3$, let $C_{h_3} = C_2$, then $i_{2,1} \notin C_{h_3}, i_{1,2} \in C_{h_3}, i_{3,1} \in C_{h_3}$, and $C_{h_3} \cup \{i_{2,1}\} \setminus \{i_{3,1}\} = \{2,4,5\}$ is not a cover. So condition~\ref{cond: 3} holds.
Let $C_{h_1} = C_2$, then $i_{1,1} \in C_{h_1}$, since here $C = [5]$, condition~\ref{cond: 4} holds.
Lastly, $\alpha(C_{h_1}) = \alpha(C_{h_2}) = \alpha(C_{h_3}) = 5$, so condition~\ref{cond: 5} also holds. 
Hence Theorem~\ref{theo: facet_condition_1} yields that this S-MCI is facet-defining.
$\hfill \diamond$
\end{example}

A \emph{clutter} is a family of subsets of a ground set with the property that no subset in the clutter is contained in any other subset in the clutter. 
It is simple to check that the set of minimal covers $\mathcal C$ of a knapsack set is a clutter, which we call the \emph{minimal cover set}. 
For a clutter $\mathcal C$ and every subset $Z$ of $[n]$, the \emph{deletion}
%$\mathcal C \setminus Z$ 
is defined as $\{A \in \mathcal C \mid A \cap Z =\emptyset\}$, and the \emph{contraction}
%$\mathcal C/ Z$ 
is defined as $\{A-Z \mid A \in \mathcal C\}$. 
%It's easy to verify that both $\mathcal C \setminus Z$ and $\mathcal C / Z$ are clutters. 
A \emph{minor} of $\mathcal C$ is a clutter that may be obtained from $\mathcal C$ by a sequence of deletions and contractions. 
%From \cite{laurent1992characterization} and \cite{seymour1977matroids}, 
For a minimal cover set of a TOMKS, 
we have the following theorem that was mentioned in Section~\ref{sec:intro}.

\begin{theorem}
\label{theo: cover_clutter}
Let $\mathcal C$ be the minimal cover set of a TOMKS $K$. Then $\conv(K) = \{x \in [0,1]^n \mid x(C) \leq |C|-1, \forall C \in \mathcal C\}$ if and only if $\mathcal C$ has no minor isomorphic to $J_q = \{\{2,\ldots, q\}, \{1,i\} \text{ for }  i = 2, \ldots, q\}$ with $ q \geq 3.$
\end{theorem}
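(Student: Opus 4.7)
The plan is to split the biconditional into its two directions, exploiting the correspondence (for a TOMKS) between clutter deletion/contraction and fixing a variable to $0$/$1$: each minor of $\mathcal{C}$ is itself the minimal cover clutter of a smaller TOMKS realized as an appropriate face of $K$.

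For the ``only if'' direction I would argue the contrapositive. Given a minor of $\mathcal{C}$ isomorphic to $J_q$ for some $q\geq 3$, obtained by deleting $V_0\subseteq[n]$ and contracting $V_1\subseteq[n]$, I would construct the candidate separator
\begin{equation*}
x^*_i = 1\ \text{for}\ i\in V_1,\quad x^*_i = 0\ \text{for}\ i\in V_0,\quad x^*_1 = \tfrac{q-2}{q-1},\quad x^*_j = \tfrac{1}{q-1}\ \text{for}\ j=2,\ldots,q,
\end{equation*}
where coordinates $\{1,\ldots,q\}$ index the residual $J_q$ block with $1$ as its central element. A direct check shows $x^*$ satisfies every cover inequality of $\mathcal{C}$: on the $J_q$ block one has $x^*_2+\cdots+x^*_q = 1 \leq q-2$ and $x^*_1+x^*_j = 1 \leq 1$, while every other minimal cover either meets $V_0$ (where a zero coordinate provides automatic slack) or meets $V_1$ outside what the contraction prescribes, leaving room on the right-hand side. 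To show $x^*\notin\conv(K)$, I would apply Algorithm~\ref{alg:mci} to the multi-cover $\{\{2,\ldots,q\},\{1,2\}\}$ inside the $J_q$ block; for $q\geq 4$ its discrepancy pattern matches Example~\ref{exam: 3} exactly, so the resulting closed-form MCI is valid for $\conv(K)$ by Theorem~\ref{theo: MCI_antichain} and is strictly violated at $x^*$ by a direct arithmetic check, while the case $q=3$ is handled by summing the three $J_3$ cover inequalities and rounding to obtain $x_1+x_2+x_3\leq 1$, which is again violated.

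For the ``if'' direction I would argue by induction on $n$ that every extreme point of $P:=\{x\in[0,1]^n : x(C)\leq |C|-1\ \forall C\in\mathcal{C}\}$ is integral. Given an extreme point $x^*$, if $x^*\in\{0,1\}^n$ we are done; otherwise let $V_1:=\{i:x^*_i=1\}$ and $V_0:=\{i:x^*_i=0\}$, and restrict to the face cut out by these bounds. The fractional remainder of $x^*$ is an extreme point of the LP polytope of a strictly smaller TOMKS whose minimal cover clutter is precisely the minor $(\mathcal{C}\setminus V_0)/V_1$. Since absence of $J_q$ minors is inherited under taking further minors, the inductive hypothesis forces the smaller LP polytope to coincide with its integer hull, contradicting the assumed fractionality.

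The main obstacle is the structural anchor underlying the ``if'' direction: the induction reduces to a strictly smaller instance only when $x^*$ actually pins some coordinate at a bound, and it must ultimately rest on a classification asserting that $J_q$ is the \emph{only} minimal clutter (over totally ordered weights) yielding a vertex of $P$ with \emph{all} coordinates fractional. This classification is the core content of Laurent and Sassano \cite{laurent1992characterization} and requires ruling out every other fractional-vertex pattern by explicit convex decomposition --- a substantially more delicate combinatorial analysis than the inductive bookkeeping above.
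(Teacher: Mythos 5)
Your proposal has two genuine problems. First, in the ``only if'' direction your candidate fractional point is complemented the wrong way, and as written it is not cut off by the MCI you propose. The canonical fractional vertex of the set-covering polytope of $J_q$ is $y_1=\tfrac{q-2}{q-1}$, $y_i=\tfrac{1}{q-1}$; in the packing (cover-inequality) form $x=\mathbf{1}-y$ this becomes $x^*_1=\tfrac{1}{q-1}$ and $x^*_j=\tfrac{q-2}{q-1}$ for $j\ge 2$, which makes all the $J_q$ cover inequalities tight. With your assignment ($x^*_1=\tfrac{q-2}{q-1}$, $x^*_j=\tfrac{1}{q-1}$) the inequality $x(\{2,\dots,q\})\le q-2$ has slack $q-3$, and the closed-form MCI from Example~\ref{exam: 3} applied to $\{\{2,\dots,q\},\{1,2\}\}$, namely $(q-2)(x_1+x_2)+\sum_{\ell=3}^q x_\ell\le 2q-5$, evaluates to $(q-2)\tfrac{q}{q-1}<2q-5$ for $q\ge 4$: no violation. (With the corrected point the left-hand side is $(q-2)+\tfrac{(q-2)^2}{q-1}>2q-5$, so the direction can be repaired; but you would still need to justify that the central element of the $J_q$ minor is the smallest index in the residual TOMKS so that Example~\ref{exam: 3}'s formula applies, and that the two sets are genuine covers of the face obtained by fixing $V_1$ to one.) Second, and more seriously, the ``if'' direction is not a proof: your induction only bites when a vertex has a coordinate at a bound, and you concede that the fully-fractional case rests on the classification of Laurent and Sassano, which you neither prove nor correctly reduce to. That classification is stated for clutters with no $P_4$ minor and also excludes $Q_6$; invoking it requires verifying that the minimal cover clutter of a TOMKS has no $P_4$ and no $Q_6$ minor, which is precisely where the total-order hypothesis does its work and which your proposal never addresses.

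For comparison, the paper's proof is a short reduction rather than a from-scratch argument: it checks via the dominance/shift property that $\mathcal{C}$ excludes $P_4$ and $Q_6$ as minors, invokes Theorem~1.1 of \cite{laurent1992characterization} for the set-covering polytope, complements variables $y=\mathbf{1}-x$ to pass to the cover-inequality form, and finally cites the fact that $\conv(K)$ coincides with the convex hull of the points satisfying all minimal cover inequalities. Your plan replaces the citation with work that is either erroneous (the separator) or absent (the integrality classification), so as it stands it does not establish the theorem.
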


Theorem~\ref{theo: cover_clutter} follows easily from Theorem 1.1 in \cite{laurent1992characterization} (see also \cite{seymour1977matroids}) and the fact that the minimal cover set $\mathcal C$ of a TOMKS $K$ is indeed a \emph{shift clutter} as defined in \cite{bertolazzi1987mn}. 
For completeness, we provide a proof in Appendix~\ref{subsec: proof_cover_clutter}.

From Theorem~\ref{theo: cover_clutter}, we know that in order for $\conv(K)$ to have non-trivial facet-defining inequalities other than CIs, the minimal cover set $\mathcal C$ must have minors isomorphic to $J_q$. 
Now we consider a special clutter $\{\{1, \ldots, p-1, p+1, \ldots, q\}, \{1, \ldots, p, i\} \text{ for }i=p+1, \ldots, q\}$ for some $p = 1, \ldots, q-2, q \leq n$. 
After contracting $\{1, \ldots, p-1\}$, we obtain a minor $\{\{p+1, \ldots, q\}, \{p, i\} \text{ for } i = p+1, \ldots, q\}$ that is isomorphic to $J_{q-p+1}$. 
The next theorem states that, if the minimal cover set is this particular clutter, then we can provide the complete linear description of $\conv(K)$.

\begin{theorem}
\label{prop: suff_full_3}
Let $K$ be a TOMKS whose minimal cover set is \\
$\{\{1, \ldots, p-1, p+1, \ldots, q\}, \{1, \ldots, p, i\} \text{ for }i=p+1, \ldots, q\}$ for some $p = 1, \ldots, q-2$, $q \leq n$. 
Then $\conv(K)$ can be described by the bound constraints, the CIs: $\sum_{i=1}^p x_i + x_j \leq p \text{ for } j=p+1, \ldots, q,$ $\sum_{i=1}^{p-1} x_i + \sum_{i=p+1}^q x_i \leq q-2$, and one MCI: $(q-p) \sum_{i=1}^{p-1} x_i + (q-p-1)x_p + \sum_{i=p+1}^q x_i \leq p(q-p)-1$.
\end{theorem}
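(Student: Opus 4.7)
The proof splits into two parts: validity of the MCI (the other listed inequalities are either trivial bounds or directly the given minimal cover inequalities), and completeness of the description.

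For validity, I would parametrize any non-cover $S \subseteq [n]$ by $a' := |S \cap [p-1]|$, $\epsilon := \mathbf{1}[p \in S]$, and $b := |S \cap \{p+1,\ldots,q\}|$, so that the MCI's left-hand side at $\chi^S$ becomes $(q-p)\,a' + (q-p-1)\,\epsilon + b$. Non-cover membership, applied to each of the $q-p+1$ explicitly listed minimal covers, produces exactly two conditions: (i) either $a' \leq p-2$ or $b \leq q-p-1$ (from $\{1,\ldots,p-1,p+1,\ldots,q\}$), and (ii) either $a' + \epsilon \leq p-1$ or $b = 0$ (from each $\{1,\ldots,p,j\}$). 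A short case analysis on $a'$---with $a' \leq p-2$ handled directly, and $a' = p-1$ split further on whether $\epsilon = 0$ or $b = 0$---verifies that the left-hand side never exceeds $p(q-p)-1$.

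For completeness, let $P$ denote the polytope defined by the listed inequalities; validity yields $\conv(K) \subseteq P$, so it remains to show every vertex of $P$ has $\{0,1\}$-valued coordinates, after which membership in $K$ follows from the listed CIs. The variables $x_{q+1},\ldots,x_n$ appear only in bound constraints, hence are integer at any vertex, and attention can be restricted to $[q]$. I would then exploit the invariance of the constraint system under independent permutations of $\{1,\ldots,p-1\}$ and of $\{p+1,\ldots,q\}$: averaging a candidate vertex over the corresponding group orbit produces a symmetric point in $P$ with common values $u = x_i$ for $i<p$, $v = x_p$, and $w = x_i$ for $i>p$, reducing the analysis to an aggregated polytope in the three variables $u,v,w$. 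Enumerating the tight-constraint configurations at the vertices of this aggregated polytope, and disaggregating any fractional aggregated vertex into a convex combination of integer vertices of $P$, completes the argument.

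The main obstacle is the completeness proof: while validity is a clean calculation, confirming $P \subseteq \conv(K)$ requires carefully verifying that the joint action of the $(q-p+1)$ CIs, the single MCI, and the bounds admits no fractional extreme point. In particular, the disaggregation step must respect the per-coordinate bounds $x_i \in [0,1]$, which break the full symmetry; whenever the aggregated polytope has a fractional vertex, one must produce an explicit convex decomposition of its lift into integer points of $P$. An alternative approach---showing by LP duality that every linear objective attains its maximum over $P$ at an integer point---may be slicker but still requires exhibiting an integer optimum in each configuration of basic feasible solutions.
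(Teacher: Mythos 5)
Your validity calculation for the MCI is correct and is a reasonable self-contained substitute for what the paper does (the paper instead observes that the inequality is the MCI of the multi-cover $\{\{1,\ldots,p-1,p+1,\ldots,q\},\{1,\ldots,p,q\}\}$, whose discrepancy family has the shape of Example~\ref{exam: 3}, and invokes Theorem~\ref{theo: MCI_antichain}). For completeness, however, you and the paper part ways entirely: the paper never analyzes the vertices of the polytope $P$ cut out by the listed inequalities. It takes an arbitrary non-trivial facet-defining inequality $\pi^Tx\le 1$ of $\conv(K)$, uses the $(n-1)$-dimensionality of the face $\{x\in K\mid \pi^Tx=1\}$ together with the explicit minimal-cover structure to force $\pi_i=0$ for $i>q$, $\pi_1=\cdots=\pi_{p-1}=\pi([n])-1$, and $\pi([p])=1$ when $\pi_p>0$, and then shows in a three-way case split on $\pi_p$ that $\pi^Tx\le1$ must coincide with one of the listed CIs or with the MCI. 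That route avoids the integrality-of-$P$ question altogether.

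The gap in your sketch is the symmetrization step. If $x^*$ is a (possibly fractional) vertex of $P$, its average $\bar x$ over the group permuting $[p-1]$ and $\{p+1,\ldots,q\}$ is a point of $P$, but it retains essentially no information about $x^*$: a fractional, non-symmetric vertex can average to an integral point or to a point in the relative interior of the three-variable aggregated polytope. So proving that the aggregated polytope is integral, or that its fractional vertices disaggregate into integer points of $P$, establishes only that the \emph{symmetric} points of $P$ lie in $\conv(K)$, not that all of $P$ does; the implication you need runs in the direction that averaging destroys. A workable repair uses the symmetry differently: if $i,j\in[p-1]$ are both strictly between their bounds at a vertex, then $e^i-e^j$ is orthogonal to every tight constraint (all structured inequalities assign equal coefficients to $i$ and $j$), contradicting vertexhood, so at most one coordinate of $[p-1]$ is fractional; but note this clean perturbation fails for the class $\{p+1,\ldots,q\}$, because the individual CIs $\sum_{\ell=1}^p x_\ell + x_j\le p$ do distinguish its members, so that class requires a genuine case analysis on which of those CIs are tight. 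As written, the disaggregation plan does not close the argument; either carry out this vertex analysis in full or adopt the paper's facet-classification strategy.
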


Since CI is a special case of MCI, Theorem~\ref{prop: suff_full_3} can be seen as a particular instance where the facet-defining inequalities are all given by MCIs.
The proof of this theorem can be found in Appendix~\ref{subsec: proof_suff_full_3}.

\section{Separation Problem}
\label{sec: separation}

 The \emph{separation problem} for a multiple knapsack set $K$ is: ``Given a vector $\tilde x \in [0,1]^n$, either find an inequality that is valid for $K$ and violated by $\tilde x$, or prove that no such inequality exists''. 
%  One can define the separation problem for a given family of cutting-planes (such as CIs, (1,k)-configuration inequalities) specifically. 
%Separation algorithms are especially useful if one wishes to use those valid inequalities in a cutting-plane algorithm (see, e.g., \cite{wolsey1999integer}). 
Even though the separation problems for some well-known classes of inequalities, e.g., CIs, simple LCIs and general LCIs have all been shown to be $\mathcal{NP}$-hard \cite{ferreira1996solving,gu1999lifted,klabjan1998complexity}, several efficient heuristics and exact separation algorithms have been proposed \cite{crowder1983solving,gu1998lifted,wolsey1999integer,weismantel19970}. 
In this section, we propose a \emph{mixed-integer programming (MIP)} formulation to solve the exact separation problem for MCIs.

First, we introduce the following concept of a \emph{skeleton}.
For $D \subseteq [n]$ and a function $f: D \to \N$, we denote by $f(D): = \{f(i) \mid i \in D\}$ the image of $D$ under $f$. 
 \begin{definition}[Skeleton]
 Let $\C \subseteq 2^{[n]}$.
 For any $i \in \cup_{D \in \mathcal{D}(\C)} D$, let $f(i) := k$ if $i$ is the $k$-th least element of $\cup_{D \in \mathcal{D}(\C)} D$. 
 Then we say that $\{f(D) \mid D \in \mathcal{D}(\C)\}$ is the \emph{skeleton} of $\C$.
 \end{definition}
In other words, the skeleton of $\C$ is isomorphic to $\mathcal{D}(\C)$ by relabelling the elements of $\cup_{D \in \mathcal{D}(\C)} D$ with their order in the set. 

% \begin{example}
% Let $\C = \{\{2, 4, 7\}, \{3, 4, 7, 8\}\}$.
% We then have $\mathcal{D}(\C) = \{\{2\}, \{3, 8\}\}$ and $\cup_{D \in  \mathcal{D}(\C)} D = \{2,3,8\}$.
% %In the set $\{2,3,8\}$, the index 2 is the smallest element, the index 3 is the second smallest element, and the index 8 is the third smallest element.
% We obtain $f(2) = 1, f(3) = 2, f(8) = 3$, and the skeleton of $\C$ is $\{\{1\}, \{2,3\}\}$. 
% \end{example}

Due to the lack of a complete understanding of multi-covers, it is currently challenging to derive an efficient scheme to separate various MCIs altogether. 
In fact, for any given family of covers, we conjecture that it is $\mathcal{NP}$-hard to decide whether or not it is a multi-cover.
However, different multi-covers with the same (or similar) skeleton structure tend to have very similar combinatorial properties.
It turns out that for any fixed skeleton, we are able to formulate as a MIP problem the separation problem for MCIs whose associated multi-covers have that skeleton. 
We present the exact formulation in Appendix~\ref{subsec: additional_formulation_sep}. Here we mainly introduce a different formulation, which enables us to separate a sub-family of MCIs whose associated multi-cover's skeleton is either $\{\{1\}, \{2, \ldots, t\}\}$ or $\{\{1,t+1\}, \{2, \ldots, t\}\}$, for any $t \geq 3$.  There are the inequalities given in Examples~\ref{exam: 3} and \ref{exam: 1_t+1}. 
Given a vector $\tilde x \in [0,1]^n$, we provide below the formulation for the corresponding separation problem:

%For a multi-cover with only two covers, 
%by Theorem~\ref{theo: multi_cover2} there are only 6 types of skeletons with two sets. 
%Take type \textit{(v)} for example, which is also Example~\ref{exam: 4}, here the skeleton is $\{\{1\}, \{2, \ldots, m\}\}$ for some $m \geq 3$. 
%Then the corresponding MCI separation problem can actually be formulated as the following simpler optimization problem. 

\begin{align}
\label{Sep2}
\tag{Sep2}
\begin{split}
\min \quad & t + \sum_{i \in [n]} \gamma_i - \sum_{i \in [n]} (\alpha_i + \beta_i + \gamma_i) \tilde x_i \\
%&\textcolor{red}{t + \sum_{i \in [n]} \gamma_i - \sum_{i \in [n]} (\alpha_i + \beta_i + \gamma_i+z_i) \tilde x_i } \\
\st \quad & t \geq \sum_{i \in [n]} \alpha_i, \ t \geq  \sum_{i \in [n]} \beta_i, \\
& u_i + v_i + w_i \leq 1, \quad \forall i \in [n] \\
%& \textcolor{red}{z_1 \geq z_2 \geq \ldots \geq z_n, u_i + v_i + w_i + z_i \leq 1},\\
&  \alpha_i \geq \beta_j + (1+M)u_i - M, \ \beta_i \geq \alpha_j + (1+M) v_i - M, \quad  \forall i,j \in [n], \ j > i\\
%& \alpha_i \geq \beta_j + v_j, \ \beta_i \geq \alpha_j + u_j, \quad i <j \\
& w_{1,i} + w_{2,i} = w_i, \quad \forall i \in [n] \\
&  \alpha_i \leq M u_i, \ \beta_i \leq M v_i, \ \gamma_i \leq M w_i, \quad \forall i \in [n] \\
& \gamma_i \geq \sum_{j>i} \alpha_j + (1+nM)w_{1,i} -nM, \ \gamma_i \geq \sum_{j>i} \beta_j + (1+nM) w_{2,i} -nM, \quad \forall i \in [n]\\
& \gamma_i \geq  \alpha_j + Mw_{1,i}-M, \ \gamma_i \geq  \beta_j + M w_{2,i} - M, \quad \forall i,j \in [n], \ j < i \\
& \sum_{i \in [n]} A_{j,i} (u_i + w_i) \geq (b_j+1) \cdot \lambda_j, \ \sum_{i \in [n]} A_{j,i} (v_i + w_i) \geq (b_j+1) \cdot \mu_j, \quad \forall j \in [m] \\
& \sum_{j \in[m]} \lambda_j \geq 1, \sum_{j \in[m]} \mu_j \geq 1,\\
& \sum_{i \in [n]} v_i \geq 2, \ \sum_{j<i} u_j \geq v_i, \ v_i + \sum_{j \leq i} u_j \leq 2, \quad \forall i \in [n] \\
& \lambda_j, \mu_j \in \{0,1\}, \quad \forall j \in [m] \\
& u_i, v_i, w_i, w_{1,i}, w_{2,i}, z_i \in \{0,1\}, \quad \forall i \in [n] \\
& \alpha_i, \beta_i, \gamma_i \in \Z, \quad \forall i \in [n].
\end{split}
\end{align}

Next we show that, the separation problem for MCIs from multi-covers with skeleton $\{\{1\}, \{2, \ldots, t\}\}$ or $\{\{1,t+1\}, \{2, \ldots, t\}\}$ can be exactly solved by solving the above MIP problem.

\begin{theorem}
\label{theo: sep_2}
Given a TOMKS $K$ and a point $\tilde x$, there exists a multi-cover $\C$ whose skeleton is $\{\{1\}, \{2, \ldots, t\}\}$ or $\{\{1,t+1\}, \{2, \ldots, t\}\}$ for some $t \geq 3$ and an associated MCI to separate $\tilde x$ from $K$, if and only if \eqref{Sep2} has an optimal value less than 1. 
\end{theorem}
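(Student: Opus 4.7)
My plan is to prove both directions of the equivalence by exhibiting a correspondence between feasible solutions of \eqref{Sep2} with objective value less than one and pairs $(\C,\alpha^T x\leq\beta)$, where $\C=\{C_1,C_2\}$ is a multi-cover with skeleton $\{\{1\},\{2,\ldots,t\}\}$ or $\{\{1,t+1\},\{2,\ldots,t\}\}$ and $\alpha^T x\leq\beta$ is an associated MCI violated by $\tilde x$. The guiding interpretation of the MIP variables is as follows: $u_i,v_i,w_i$ indicate membership of $i$ in $C_1\setminus C_2$, $C_2\setminus C_1$, $C_0:=C_1\cap C_2$ respectively (consistent with $u_i+v_i+w_i\leq 1$ and $\alpha_i\leq Mu_i$, $\beta_i\leq Mv_i$, $\gamma_i\leq Mw_i$); $\alpha_i,\beta_i,\gamma_i$ are the MCI coefficients supported on these three disjoint parts, so the coefficient of $x_i$ in the MCI is the sum $\alpha_i+\beta_i+\gamma_i$; $\lambda_j,\mu_j$ certify that the $j$-th knapsack row makes $C_1$ and $C_2$ a cover; $w_{1,i},w_{2,i}$ select which of the two branches of the outer min in Step~\ref{step: modified_coe_intersection} of Algorithm~\ref{alg:mci} is activated for $i\in C_0$; and $t$ equals $\max\{\sum_i\alpha_i,\sum_i\beta_i\}$ at optimum, so that $t+\sum_i\gamma_i$ equals $\max_h\alpha(C_h)$, i.e.\ one more than the MCI right-hand side.

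For the forward direction, given a multi-cover $\{C_1,C_2\}$ with the required skeleton and a violated MCI $\alpha^T x\leq\beta$, I would read off a feasible MIP solution according to the interpretation above, setting $w_{1,i}=1$ exactly when the outer min in Step~\ref{step: modified_coe_intersection} is attained by $h=2$ (the branch using $\bar C_2=C_1\setminus C_2$), and $w_{2,i}=1$ otherwise. The structural constraints $\sum_i v_i\geq 2$, $\sum_{j<i}u_j\geq v_i$, and $v_i+\sum_{j\leq i}u_j\leq 2$ are then immediate from the shape of the two skeletons: the smallest element of $C\setminus C_0$ is the unique (or first) element of $C_1\setminus C_2$, and any second element of $C_1\setminus C_2$ lies to the right of every element of $C_2\setminus C_1$. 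The knapsack inequalities of \eqref{Sep2} are satisfied by taking $\lambda_j,\mu_j$ to be the indicator of some row of $K$ that is violated by $\chi^{C_1}$, respectively $\chi^{C_2}$. The remaining coefficient inequalities reduce to the lower bounds in Steps~\ref{step: MCI_4} and \ref{step: modified_coe_intersection} of Algorithm~\ref{alg:mci}. A direct substitution shows the objective equals $\max_h\alpha(C_h)-\sum_i(\alpha_i+\beta_i+\gamma_i)\tilde x_i = \beta+1-\sum_i(\text{coef}_i)\tilde x_i$, which is strictly less than one exactly because the MCI is violated by $\tilde x$.

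For the backward direction, from an optimal MIP solution with objective less than one I would set $C_1:=\{i: u_i+w_i=1\}$ and $C_2:=\{i:v_i+w_i=1\}$. The three structural constraints combine to force (i) the smallest index of $C_1\setminus C_2$ is strictly less than every index of $C_2\setminus C_1$; (ii) $|C_2\setminus C_1|\geq 2$; and (iii) the support of $u$ has size at most two, with any second element lying strictly above every index of $C_2\setminus C_1$. Hence the discrepancy family of $\{C_1,C_2\}$ has exactly one of the two prescribed skeleton shapes, and by the verification already carried out in Examples~\ref{exam: 3} and \ref{exam: 1_t+1} it is automatically a multi-cover. The cover inequalities of the MIP together with $\sum_j\lambda_j\geq 1$ and $\sum_j\mu_j\geq 1$ guarantee that both $C_1$ and $C_2$ are covers for $K$.

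It then remains to show that the vector of coefficients $\alpha_i+\beta_i+\gamma_i$ together with right-hand side $t+\sum_i\gamma_i-1$ is an MCI for $\{C_1,C_2\}$ and to translate the objective bound into a violation. For $i\in C\setminus C_0$, the big-$M$ inequalities on $\alpha$ and $\beta$ enforce precisely the lower bound of Step~\ref{step: MCI_4}. For $i\in C_0$, exactly one of $w_{1,i},w_{2,i}$ equals one, and the corresponding $\gamma$-inequalities encode the inner max of Step~\ref{step: modified_coe_intersection} along that branch; since the outer min is upper bounded by the chosen branch, Step~\ref{step: modified_coe_intersection} is satisfied. Because $t$ appears with coefficient $+1$ in the objective while being bounded only from below by $\sum_i\alpha_i$ and $\sum_i\beta_i$, at optimum $t=\max\{\sum_i\alpha_i,\sum_i\beta_i\}$, so $t+\sum_i\gamma_i-1=\max_h\alpha(C_h)-1=\beta_{\mathrm{MCI}}$. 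The objective being strictly less than one therefore translates to $\sum_i(\alpha_i+\beta_i+\gamma_i)\tilde x_i>\beta_{\mathrm{MCI}}$, which says the MCI separates $\tilde x$. The main technical obstacle I expect is the careful verification of the two-branch big-$M$ disjunction in the $\gamma_i$-constraints (ensuring that each branch is vacuous when its indicator is zero and tight when it equals one), together with ruling out spurious optimal solutions that deviate from the intended interpretation (for instance, negative coefficients where $u_i+v_i+w_i=0$); both concerns are handled by observing that any such deviation can only weaken the objective and hence cannot be optimal.
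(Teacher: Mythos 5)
Your proposal is correct and follows essentially the same route as the paper's proof: both establish the theorem by interpreting $u,v,w$ as indicators of $C_1\setminus C_2$, $C_2\setminus C_1$, $C_1\cap C_2$, reading the big-$M$ constraints as encodings of Steps~\ref{step: MCI_4} and \ref{step: modified_coe_intersection} of Algorithm~\ref{alg:mci}, the $\lambda,\mu$ constraints as cover certificates, the last block of constraints as the skeleton restriction, and the objective (with $t=\max\{\sum_i\alpha_i,\sum_i\beta_i\}$ at optimum) as the violation condition. Your write-up is somewhat more explicit than the paper's in separating the two directions and in flagging the spurious-solution and big-$M$ tightness issues, but the underlying argument is the same.
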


\begin{proof}
First, we explain the meaning of the variables in \eqref{Sep2}. 
Let $C_1$ and $C_2$ be two covers, for any $i \in [n]$, we define $u_i := \mathbbm{1}\{i \in C_1 \setminus C_2\}$, $v_i := \mathbbm{1}\{i \in C_2 \setminus C_1\}$, and $w_i: = \mathbbm{1}\{i \in C_1 \cap C_2\}$. 
So we have the second constraint $u_i + v_i + w_i \leq 1$ in \eqref{Sep2}. 
Variables $\alpha_i, \beta_i, \gamma_i$ denote the coefficients for index $i$ in $C_1 \setminus C_2, C_2 \setminus C_1$ and $C_1 \cap C_2$ respectively, which is enforced by $\alpha_i \leq M u_i$ etc., through some ``big-M" constant. 
From Step~7 of Algorithm~\ref{alg:mci}, 
$$
\sum_{i \in [n]} (\alpha_i + \beta_i + \gamma_i) x \leq \max(\sum_{i} \alpha_i + \sum_{i} \gamma_i, \sum_{i} \beta_i + \sum_{i}\gamma_i) - 1
$$ 
is the MCI we will obtain. 
Introducing an additional variable $t$ and constraints $t \geq \sum_i \alpha_i$, $t \geq \sum_i \beta_i$, we obtain the objective function and the first constraint of \eqref{Sep2}. 
In particular, the objective value is strictly less than 1 if and only if the obtained inequality is violated by $\tilde x$. 
Using a big-$M$ formulation, $\alpha_i \geq \beta_j + (1+M)u_i - M$ formulates the constraint of Step~4 of Algorithm~\ref{alg:mci}: for $i \in C_1 \setminus C_2, \alpha_i \geq \max_{j >i, j \in C_2 \setminus C_2} \beta_j + 1$.
By splitting the binary variable $w_i$ into two binary variables $w_{1,i}$ and $w_{2,i}$, and using the big-$M$ formulation
$\gamma_i \geq \sum_{j>i} \alpha_j + (1+nM)w_{1,i} -nM, \gamma_i \geq  \alpha_j + Mw_{1,i}-M$ etc., we are able to formulate the constraint of Step~6 of Algorithm~\ref{alg:mci}: 
\begin{align*}
\begin{split}
& \gamma_i \geq \max\{\max_{j<i, j \in C_1 \setminus C_2} \alpha_j, \sum_{j>i, j \in C_1 \setminus C_2} \alpha_j + 1\} \text{ or }  \\
& \gamma_i \geq \max\{\max_{j<i, j \in C_2 \setminus C_1} \beta_j, \sum_{j>i, j \in C_2 \setminus C_1} \beta_j + 1\}.
\end{split}
\end{align*}

Constraints $\sum_{i \in [n]} A_{j,i} (u_i + w_i) \geq (b_j+1) \cdot \lambda_j, \ \sum_{i \in [n]} A_{j,i} (v_i + w_i) \geq (b_j+1) \cdot \mu_j$ and $\sum_i \lambda \geq 1, \sum_i \mu_i \geq 1$ enforce that $C_1$ and $C_2$ are two covers of $K$. 
Lastly, the constraints $\sum_{i \in [n]} v_i \geq 2, \sum_{j<i} u_j \geq v_i, v_i + \sum_{j \leq i} u_j \leq 2$ enforce the $\{\{1\}, \{2, \ldots, t\}\}$ or $\{\{1,t+1\}, \{2, \ldots, t\}\}$ skeleton structures with $t \geq 3$.
In particular, constraint $\sum_{j<i} u_j \geq v_i$ means $\min\{i \mid i \in C_1 \setminus C_2\} < \min\{i \mid i \in C_2 \setminus C_1\}$, and constraint $v_i + \sum_{j \leq i} u_j \leq 2$ means $|C_1 \setminus C_2| \leq 2$, and $\max\{i \mid i \in C_1 \setminus C_2\} > \max\{i \mid i \in C_2 \setminus C_1\}$ if $|C_1 \setminus C_2| = 2$. These are exactly the conditions for $\{C_1, C_2\}$ to have skeleton $\{\{1\}, \{2, \ldots, t\}\}$ or $\{\{1,t+1\}, \{2, \ldots, t\}\}$. 
\qed \end{proof}

It's worth mentioning that, the constraint $\sum_{i \in [n]} v_i \geq 2$ in \eqref{Sep2} can actually be removed. In that case, \eqref{Sep2} will also be able to separate MCIs with skeleton $\{\{1\}, \{2, \ldots, t\}\}$ or $\{\{1,t+1\}, \{2, \ldots, t\}\}$ for some $t \leq 2$, as well as the normal CIs, since the optimal solution to \eqref{Sep2} with binary variables $u=v=0$ corresponds to a separating CI, where the cover is given by the support of the $w$ vector.

\section{Numerical Experiments}
\label{sec: numerical}

In this section we present results of numerical experiments designed to test the optimality gap closed by our proposed cutting-planes and their variants.

For a given fractional solution, we use the separation formulation \eqref{Sep2} to produce MCIs that arise from multi-covers with two specific skeletons. We also relax the constraint $\sum_{i \in [n]} v_i \geq 2$ therein, so the separation problem may separate CI as well.
The numerical experiment proceeds as follows: for a given totally-ordered multiple knapsack problem, in the $i$-th iteration we optimize the linear objective function over the current linear relaxation, and obtain an optimal solution $x^i$. We terminate the experiment if the separation problem \eqref{Sep2} associated with $x^i$ does not have optimal value being strictly less than 1. Otherwise, we add the separating cutting-plane generated from \eqref{Sep2} into the current linear relaxation, then proceed to the next step. The linear relaxation in the first iteration is initialized as  the natural LP relaxation of the problem. 

For our experiments, we create synthetic instances of the multiple knapsack problem: $\max\{c^T x \mid Ax \leq b, x \in \{0,1\}^n\}$, where $A \in \Z^{m \times n}_+, c \in \Z^n_+, b \in \Z^m_+$.
%\begin{align*}
%\begin{split}
%\max \quad & c^T x \\
%\st \quad & A x \leq b,\\
%& x \in \{0,1\}^n.
%\end{split}
%\end{align*}
For each row of the matrix $A_j$, we generate $n$ random integers in the range $[1, n^2]$ and then sort them in non-increasing order.
The right-hand-side number $b_j$ is generated as a random integer number in the range $[A_{j,1}, \sum_i (A_{j,i})]$.
The objective vector $c$ is also generated from sorting $n$ random integer numbers in $[1, n^2]$ in non-increasing order. 
We create ten instances of sizes $n \in \{20,30\}$ and with $m \in \{1,2,3\}$ constraints.

We compare the MCI inequality and two of its variants against analogous CIs.  Specifically, we will compare the MCI against the original CI, the E-MCI against the classical ECI, and a lifted version of MCI \emph{(L-MCI)} with a lifted CI (\emph{LCI}).
To obtain a lifted CI or a lifted MCI, we start with the original CI or MCI, and then apply the simple sequential up-lifting procedure of \cite{padberg1975note}, lifting the variables in the order $1,2,\ldots,n$.  In subsequent tables and figures, we use the following abbreviations:

\begin{itemize}
    \item[] \emph{LP:} \quad Denotes the natural LP relaxation.
    \item[] \emph{MCI}: \quad Denotes the linear relaxation after iteratively solving the separation problem~\eqref{Sep2} to add all CIs and MCIs whose associated multi-covers have skeleton $\{\{1\}, \{2, \ldots, t\}\}$ or $\{\{1,t+1\}, \{2, \ldots, t\}\}$.
    \item[] \emph{E-MCI}: \quad Denotes the linear relaxation where each MCI found is strengthened/extended as in \eqref{eq: e-MCI}.
    \item[] \emph{L-MCI}: Denotes the linear relaxation where each MCI is lifted.
    \item[]  \emph{CI}: \quad Denotes the linear relaxation obtained after adding all CIs. Here the separation problem is exactly solved by solving an IP, as in  \cite{crowder1983solving}.
    \item[]  \emph{ECI}: \quad Denotes the linear relaxation where each additional inequality is the ECI of the separating CI in the current iteration.
    \item[]  \emph{LCI}: \quad Denotes the linear relaxation where each additional inequality is the LCI of the separating CI in the current iteration.  
\end{itemize}

We report the summary of numerical results in Table~\ref{smalltable1} and Table~\ref{smalltable2}, and more details can be found in Appendix~\ref{subsec: detailed_data}.
For different combinations of $(n,m)$,
in Table~\ref{smalltable1}, we list the average optimality gap obtained from optimizing over the different relaxations studied, and in Table~\ref{smalltable2}, we list the total number of instances that have been solved to optimality for each method.
From these two tables, one can see quite clearly that the \emph{L-MCI} is able to close much more optimality gap than its \emph{LCI} counterpart, and solve most ($60-80\%$) of the instances to optimality.  The difference between the two methods on each instance is shown directly in  Figure~\ref{fig:lmci_lci}, where we see that instances with over 3\% optimality gap using LCI may be solved at the root node to optimality with LMCI.
% In fact, even our proposed E-MCI is able to outperform LCI in most of the instances, see Figure~\ref{fig:lmci_lci}.

\begin{table}
\centering
\caption{Average optimality gap of different cutting-plane closure. (\%)}
% \vspace{0.3cm}
\begin{tabular}{  p{1cm} | p{1cm} | p{1cm} p{1cm} p{1cm} | p{1cm} p{1cm} p{1cm} }
\hline
(n,m) & LP   & MCI & E-MCI & L-MCI &  CI & ECI & LCI \\ 
\midrule
(20, 1) &  3.93  & 1.08  & 0.27  & 0.11  & 2.00  & 0.65  &  0.63  \\
(20, 2) & 6.01   & 2.90  & 1.02  &  0.21 & 4.23  & 1.69  & 1.12   \\
(20, 3) &  6.03  & 3.89  & 0.73  &  0.27 &  4.67 & 1.17  & 1.12  \\
\midrule
%(20, 1) &  8.62 & 3.43  & 0.92  &  0.41 & 5.87  & 1.61  & 1.35  \\
%(20, 2) &  7.25 & 4.64  & 1.23  & 0.46  & 5.12  & 2.49  & 2.13  \\
%(20, 3) & 4.31  & 2.18  & 0.61  &  0.22 & 3.06  & 0.84  & 0.52  \\
%\midrule
(30, 1) & 6.34  & 4.78 & 0.59  &0.24  &  5.24 & 0.68  & 0.65 \\ 
(30, 2) & 4.76  & 2.95 & 0.89  &0.19  &  3.51 & 1.09  & 0.83 \\
(30, 3) & 3.13  & 2.50 & 0.83  &0.31  &  2.67 & 0.90  & 0.80 \\
\bottomrule
 \end{tabular}
 \label{smalltable1}
\end{table}
\begin{table}
\centering
\caption{Number of instances been solved to optimality. (out of 10)}
% \vspace{0.3cm}
\begin{tabular}{  p{1cm} | p{1cm} | p{1cm} p{1cm} p{1cm} | p{1cm} p{1cm} p{1cm} }
\hline
(n,m) & LP  & MCI & E-MCI & L-MCI &  CI & ECI & LCI \\
\midrule
(20, 1) &  0   & 3   & 4  &  6  & 1   & 1   &  2  \\
(20, 2) &   0  & 0   & 2  &   6 & 0   &  1  &  2  \\
(20, 3)  & 0  &  1  & 4   & 8  &  0  &  2  &  2 \\
\midrule
%(20, 1) &  0  & 10   &30   &  60  &    &    &    \\
%(20, 2) &  0  & 10   &   20 & 50   &    &    &    \\
%(20, 3) &  0  &  20  &  30  &  70  &    &    &    \\
%\midrule
(30, 1) & 0  & 1 & 3  & 7  &  1 & 3  & 3 \\ 
(30, 2) & 0  & 1 & 5  & 7  &  0 & 4  & 5 \\
(30, 3) & 0  & 1 & 2  & 6  &  1 & 2  & 2 \\
\bottomrule
 \end{tabular}
 \label{smalltable2}
\end{table}

\begin{figure}
    \centering
    \includegraphics[scale=0.6]{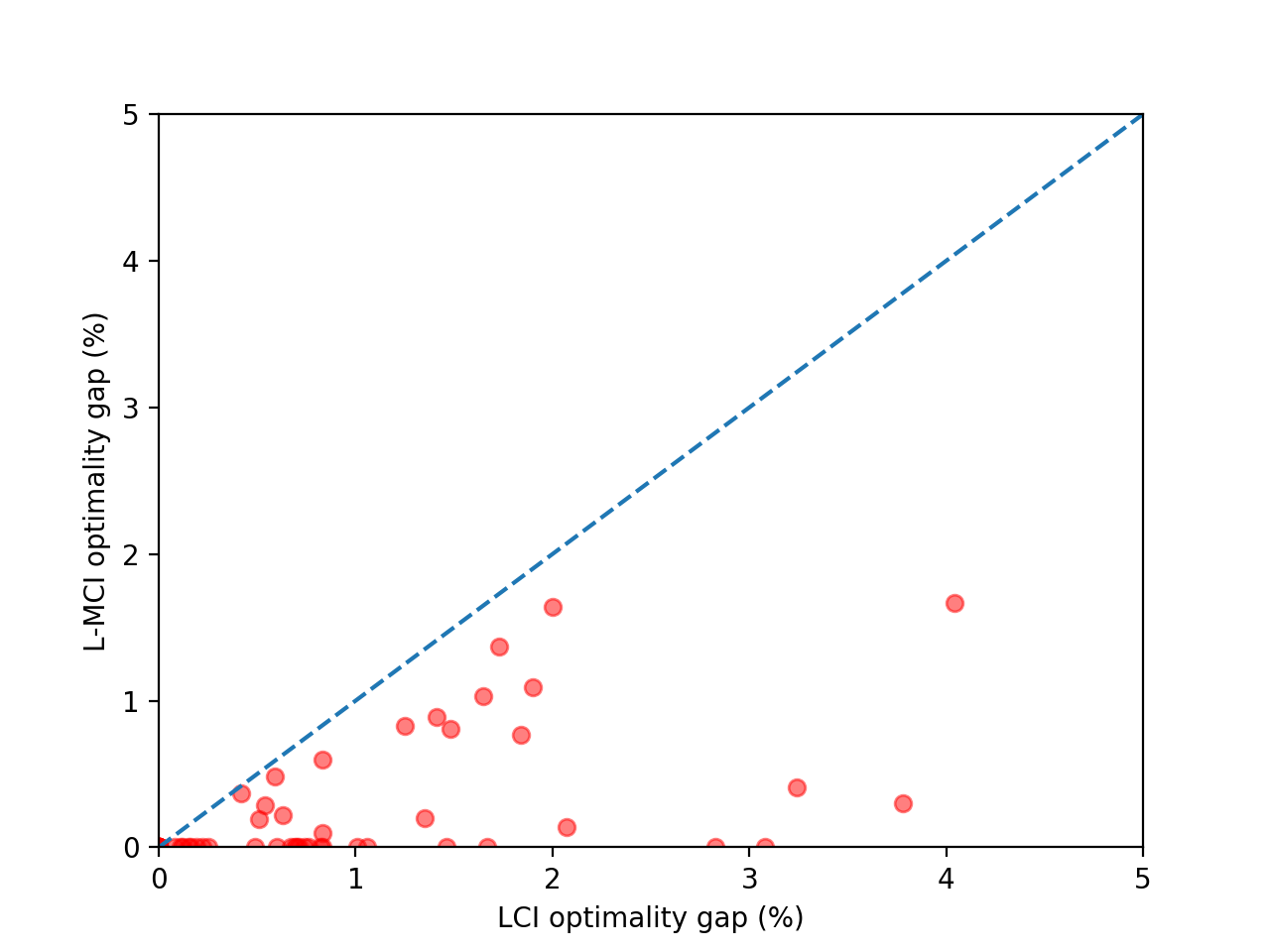}
    \caption{The x-axis and y-axis of each dot represent the optimality gap closed by LCI and L-MCI for each instance respectively.}
    \label{fig:lmci_lci}
\end{figure}

% \begin{figure}
% \centering
% \begin{subfigure}{.5\textwidth}
%   \centering
%   \includegraphics[width=1.1\linewidth]{}
%   \caption{E-MCI versus LCI.}
%   \label{fig:sub1}
% \end{subfigure}%
% \begin{subfigure}{.5\textwidth}
%   \centering
%   \includegraphics[width=1.1\linewidth]{figure_2.png}
%   \caption{L-MCI versus LCI.}
%   \label{fig:sub2}
% \end{subfigure}
% \caption{The x-axis and y-axis of each dot represent the optimality gap closed by different relaxations for each instance. Most of these dots in fig.\eqref{fig:sub1} and all of the dots in fig.\eqref{fig:sub2} lie beneath the y=x line.}
% \label{fig:test}
% \end{figure}

Because our separation method is based on the solution of a difficult MIP problem \eqref{Sep2}, we only conduct our experiments for small-sized knapsack instances.  However, we believe the computational results are promising enough to encourage others to seek efficient heuristic separation methods for this large family of inequalities.

\section{Conclusion}
In this work, we introduced a new family of valid inequalities for the intersection of knapsack sets and exhibited several scenarios in which the inequalities are not implied by other known families of cutting-planes. The numerical experiments demonstrated the potential for this new family of cuts to strengthen the linear programming relaxation more than known classes of inequalities.
Our work is among the very first that explicitly studies the polyhedral structure of the intersection of multiple knapsack sets, and we hope that the ideas presented here will give rise to new methods for generating strong valid inequalities for complex binary sets that arise in practical settings.

\bibliographystyle{spmpsci} % outcomment this and next line in Case 1
\bibliography{biblio} % if more than one, comma separated

\begin{thebibliography}{10}
\providecommand{\url}[1]{{#1}}
\providecommand{\urlprefix}{URL }
\expandafter\ifx\csname urlstyle\endcsname\relax
  \providecommand{\doi}[1]{DOI~\discretionary{}{}{}#1}\else
  \providecommand{\doi}{DOI~\discretionary{}{}{}\begingroup
  \urlstyle{rm}\Url}\fi

\bibitem{balas1975facets}
Balas, E.: Facets of the knapsack polytope.
\newblock Mathematical programming \textbf{8}(1), 146--164 (1975)

\bibitem{balas1972canonical}
Balas, E., Jeroslow, R.: Canonical cuts on the unit hypercube.
\newblock SIAM Journal on Applied Mathematics \textbf{23}(1), 61--69 (1972)

\bibitem{bertolazzi1987mn}
Bertolazzi, P., Sassano, A.: An {$O(mn)$} algorithm for regular set-covering
  problems.
\newblock Theoretical Computer Science \textbf{54}(2-3), 237--247 (1987)

\bibitem{MR3844541}
Bodur, M., Del~Pia, A., Dey, S.S., Molinaro, M., Pokutta, S.: Aggregation-based
  cutting-planes for packing and covering integer programs.
\newblock Math. Program. \textbf{171}(1-2, Ser. A), 331--359 (2018).
\newblock \doi{10.1007/s10107-017-1192-x}.
\newblock \urlprefix\url{https://doi.org/10.1007/s10107-017-1192-x}

\bibitem{MR2232597}
Calafiore, G.C., Campi, M.C.: The scenario approach to robust control design.
\newblock IEEE Trans. Automat. Control \textbf{51}(5), 742--753 (2006).
\newblock \doi{10.1109/TAC.2006.875041}.
\newblock \urlprefix\url{https://doi.org/10.1109/TAC.2006.875041}

\bibitem{crowder1983solving}
Crowder, H., Johnson, E.L., Padberg, M.: Solving large-scale zero-one linear
  programming problems.
\newblock Operations Research \textbf{31}(5), 803--834 (1983)

\bibitem{del2021multi}
Del~Pia, A., Linderoth, J., Zhu, H.: Multi-cover inequalities for
  totally-ordered multiple knapsack sets.
\newblock Proceedings of IPCO  (2021)

\bibitem{ferreira1996solving}
Ferreira, C.E., Martin, A., Weismantel, R.: Solving multiple knapsack problems
  by cutting planes.
\newblock SIAM Journal on Optimization \textbf{6}(3), 858--877 (1996)

\bibitem{fukasawa.goycoolea:11}
Fukasawa, R., Goycoolea, M.: On the exact separation of mixed integer knapsack
  cuts.
\newblock Mathematical Programming \textbf{128}, 19--41 (2011)

\bibitem{gu1998lifted}
Gu, Z., Nemhauser, G.L., Savelsbergh, M.W.: Lifted cover inequalities for 0-1
  integer programs: Computation.
\newblock INFORMS Journal on Computing \textbf{10}(4), 427--437 (1998)

\bibitem{gu1999lifted}
Gu, Z., Nemhauser, G.L., Savelsbergh, M.W.: Lifted cover inequalities for 0-1
  integer programs: Complexity.
\newblock INFORMS Journal on Computing \textbf{11}(1), 117--123 (1999)

\bibitem{hammer1975facet}
Hammer, P.L., Johnson, E.L., Peled, U.N.: Facet of regular 0--1 polytopes.
\newblock Mathematical Programming \textbf{8}(1), 179--206 (1975)

\bibitem{hojny2019knapsack}
Hojny, C., Gally, T., Habeck, O., L{\"u}then, H., Matter, F., Pfetsch, M.E.,
  Schmitt, A.: Knapsack polytopes: a survey.
\newblock Annals of Operations Research pp. 1--49 (2019)

\bibitem{klabjan1998complexity}
Klabjan, D., Nemhauser, G.L., Tovey, C.: The complexity of cover inequality
  separation.
\newblock Operations Research Letters \textbf{23}(1-2), 35--40 (1998)

\bibitem{laurent1992characterization}
Laurent, M., Sassano, A.: A characterization of knapsacks with the
  max-flow-min-cut property.
\newblock Operations Research Letters \textbf{11}(2), 105--110 (1992)

\bibitem{letchford2019lifted}
Letchford, A.N., Souli, G.: On lifted cover inequalities: A new lifting
  procedure with unusual properties.
\newblock Operations Research Letters \textbf{47}(2), 83--87 (2019)

\bibitem{MR2425035}
Luedtke, J., Ahmed, S.: A sample approximation approach for optimization with
  probabilistic constraints.
\newblock SIAM J. Optim. \textbf{19}(2), 674--699 (2008).
\newblock \doi{10.1137/070702928}.
\newblock \urlprefix\url{https://doi.org/10.1137/070702928}

\bibitem{nemirovski.shapiro:05}
Nemirovski, A., Shapiro, A.: Scenario approximation of chance constraints.
\newblock In: G.~Calafiore, F.~Dabbene (eds.) Probabilistic and Randomized
  Methods for Design Under Uncertainty, pp. 3--48. Springer (2005)

\bibitem{padberg1975note}
Padberg, M.W.: A note on zero-one programming.
\newblock Operations Research \textbf{23}(4), 833--837 (1975)

\bibitem{padberg19801}
Padberg, M.W.: (1, k)-configurations and facets for packing problems.
\newblock Mathematical Programming \textbf{18}(1), 94--99 (1980)

\bibitem{seymour1977matroids}
Seymour, P.D.: The matroids with the max-flow min-cut property.
\newblock Journal of Combinatorial Theory, Series B \textbf{23}(2-3), 189--222
  (1977)

\bibitem{weismantel19970}
Weismantel, R.: On the 0/1 knapsack polytope.
\newblock Mathematical Programming \textbf{77}(3), 49--68 (1997)

\bibitem{wolsey1976facets}
Wolsey, L.A.: Facets and strong valid inequalities for integer programs.
\newblock Operations research \textbf{24}(2), 367--372 (1976)

\bibitem{wolsey1999integer}
Wolsey, L.A., Nemhauser, G.L.: Integer and combinatorial optimization, vol.~55.
\newblock John Wiley \& Sons (1999)

\end{thebibliography}

\appendix
\addcontentsline{toc}{section}{Appendices}
\renewcommand{\thesubsection}{\Alph{subsection}}

\begin{appendices}

\section{Proof of Theorem~\ref{theo: facet_condition_1}}
\label{subsec: proof_facet_condition_1}

First, by Step~\ref{step: MCI_4} in Algorithm~\ref{alg:mci}, we have the following easy observation.
 \begin{observation}
 \label{obs: obvious}
 Let $\{C_1,\dots,C_k\}$ be a multi-cover and let $\alpha^T x \leq \beta$ its corresponding S-MCI. 
 If there exist some $t \in \N, \ell \in [n_t]$ and $h' \in [k]$ such that $i_{t, \ell} \in C_{h'}$, then $\{ i_{t,\ell}, \ldots, i_{t, n_t}\} \subseteq C_{h'}$.
 \end{observation}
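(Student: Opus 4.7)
The plan is a short proof by contradiction directly from the defining recurrence of the S-MCI coefficients (Step~\ref{step: MCI_4} of Algorithm~\ref{alg:mci} with ``$\geq$'' replaced by ``$=$'').

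Suppose, for the sake of contradiction, that there exist $t$, $\ell$, and $h'$ as in the statement with $i_{t,\ell} \in C_{h'}$, but that $\{i_{t,\ell},\ldots,i_{t,n_t}\} \not\subseteq C_{h'}$. Pick the smallest $\ell' \in \{\ell+1,\ldots,n_t\}$ for which $i_{t,\ell'} \notin C_{h'}$. By definition of $\{i_{t,1},\ldots,i_{t,n_t}\}$, every such index lies in $C \setminus C_0$, so in particular $i_{t,\ell'} \in C$. Combined with $i_{t,\ell'} \notin C_{h'}$, this gives $i_{t,\ell'} \in \bar C_{h'}$.

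Now I would apply the S-MCI recursion to the coefficient $\alpha_{i_{t,\ell}}$. Since $i_{t,\ell} \in C \setminus C_0$, the S-MCI rule yields
\[
\alpha_{i_{t,\ell}} \;=\; \max_{h \in [k] : i_{t,\ell} \in C_h} \; \max_{j \in \bar C_h : j > i_{t,\ell}} \alpha_j \;+\; 1.
\]
Specializing to $h = h'$ (which is valid since $i_{t,\ell} \in C_{h'}$) and to $j = i_{t,\ell'}$ (which is valid since $i_{t,\ell'} \in \bar C_{h'}$ and $i_{t,\ell'} > i_{t,\ell}$), I get
\[
\alpha_{i_{t,\ell}} \;\geq\; \alpha_{i_{t,\ell'}} + 1 \;=\; t + 1,
\]
using that $\alpha_{i_{t,\ell'}} = t$ by the very definition of the indices $i_{t,\cdot}$. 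But also $\alpha_{i_{t,\ell}} = t$ by the same definition, contradicting $t \geq t+1$.

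There is no real obstacle here; the only thing to be careful about is using the S-MCI equality (not merely the $\geq$-inequality that defines a general MCI), since the statement explicitly concerns the S-MCI. The rest is bookkeeping to check that the index $i_{t,\ell'}$ is a legitimate choice in the two nested maxima.
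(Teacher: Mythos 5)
Your proof is correct and matches what the paper intends: the paper states this observation without an explicit proof, remarking only that it follows from Step~\ref{step: MCI_4} of Algorithm~\ref{alg:mci}, and your argument---specializing to $h = h'$ and the witness $i_{t,\ell'} \in \bar C_{h'}$ with $i_{t,\ell'} > i_{t,\ell}$ to obtain $t = \alpha_{i_{t,\ell}} \geq \alpha_{i_{t,\ell'}} + 1 = t+1$---is exactly that intended one-line deduction. One small correction to your closing remark: your argument only ever uses the ``$\geq$'' direction of Step~\ref{step: MCI_4}, which holds for every MCI, so the S-MCI equality is not actually needed and the observation holds for arbitrary MCIs as well.
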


Now we can prove Theorem~\ref{theo: facet_condition_1}.

\begin{proof}[Proof of Theorem~\ref{theo: facet_condition_1}]
Consider the set of binary points whose support is in one of the following sets:
\begin{align}
& \mathscr{S}_1: = \{C_{h_t} \cup \{i_{t-1, \ell_t}\} \setminus \{i_{t, \ell}\} \mid t = 2, \ldots, \max_{i=1}^n \alpha_i, \ell = 1, \ldots, n_t \}, \label{set: 1} \\
& \mathscr{S}_2: = \{C_{h_t} \setminus \{i_{1,n_1}\} \mid t = 2, \ldots,  \max_{i=1}^n \alpha_i\},  \label{set: 2}\\
& \mathscr{S}_3 := \{C_{h_1} \setminus \{i_{1, \ell}\} \mid \ell = 1, \ldots, n_1\},  \label{set: 3}\\
& \mathscr{S}_4 := \{C_{h_1} \cup \{i'\} \setminus \{i_{1, 1}\}  \mid i' \notin C\}.  \label{set: 4}
\end{align}

First, we want to prove that any set in \eqref{set: 1}-\eqref{set: 4} is not a cover for $K$. 
By condition~\ref{cond: 3}, we know $i_{1,n_1} \in C_{h_t}$ for any $t = 2, \ldots,  \max_{i=1}^n \alpha_i$, and by condition~\ref{cond: 4}, we know $i_{1,1} \in C_{h_1}$. 
From Observation~\ref{obs: obvious}, there is $\{i_{1,1}, \ldots, i_{1, n_1}\} \subseteq C_{h_1}$.
Hence by condition~\ref{cond: 2}, we know that for any $t = 2, \ldots,  \max_{i=1}^n \alpha_i, C_{h_t} \setminus \{i_{1,n_1}\}$ is not a cover for $K$, and for any $\ell = 1, \ldots, n_1, C_{h_1} \setminus \{i_{1,\ell}\}$ is not a cover for $K$. So any set in $\mathscr{S}_2 \cup \mathscr{S}_3$ is not a cover. Condition~\ref{cond: 4} directly states that any set in $\mathscr{S}_4$ is not a cover. Furthermore, condition~\ref{cond: 3} states that $i_{t,1} \in C_{h_t}$, then by Observation~\ref{obs: obvious},
we know that for any $\ell \in [n_t], i_{t, \ell} \in C_{h_t}$. Also, condition~\ref{cond: 3} states that $C_{h_t} \cup \{i_{t-1, \ell_t}\} \setminus \{i_{t, n_t}\}$ is not a cover, so $C_{h_t} \cup \{i_{t-1, \ell_t}\} \setminus \{i_{t,\ell}\}$ is also not a cover of $K$.
Hence any set in $\mathscr{S}_1$ is not a cover of $K$. 

Now we want to show that $\alpha^T x = \beta$ is the only hyperplane that contains all the incidence vectors of the sets in $\mathscr{S}_1 \cup \mathscr{S}_2 \cup \mathscr{S}_3 \cup \mathscr{S}_4$. 
Let $u^T x = v$ be the hyperplane that contains all those binary points. 
Then, from the sets in $\mathscr{S}_3$, we know that $u_{i_{1,1}} = \ldots = u_{i_{1,n_1}}$, and we denote it to be $\kappa$. Since for any $t = 2, \ldots, \max_{i=1}^n \alpha_i$ and $\ell = 1, \ldots, n_t, u(C_{h_t} \cup \{i_{t-1, \ell_t}\} \setminus \{i_{t, \ell}\}) = v$, we know that for any $t = 2, \ldots, \max_{i=1}^n \alpha_i, u_{i_{t,1}} = \ldots = u_{i_{t, n_t}}$. Furthermore, since $u(C_{h_t} \cup \{i_{t-1, \ell_t}\} \setminus \{i_{t, \ell}\}) = u(C_{h_t} \setminus \{i_{1,n_1}\}) = v$, we obtain that for any $t = 2, \ldots, \max_{i=1}^n \alpha_i$ and $\ell = 1, \ldots, n_t, u_{i_{t,\ell}} - u_{i_{t-1, \ell_t}} = u_{i_{1,n_1}} = \kappa$. 
Lastly, from the points in $\mathscr{S}_3$ and in $\mathscr{S}_4$, we know that $u_{i'} = 0$ for any $i' \notin C$. 
Hence we obtain that, for any $t = 1, \ldots, \max_{i=1}^n \alpha_i, \ell = 1, \ldots, n_t$, there is $u_{i_{t,\ell}} = \kappa \cdot t$, and for any $i' \notin C, u_{i'} = 0$. 
Since $\alpha_{i_{t,\ell}} = t$ and $\alpha_{i'} = 0$ for any $t = 1, \ldots, \max_{i=1}^n \alpha_i, \ell = 1, \ldots, n_t, i' \notin C$, 
and $\{i_{t, 1}, \ldots, i_{t, n_t}\} = \{i \in C \setminus C_0 \mid \alpha_i = t\}$,
we know that $u_i = \kappa \cdot \alpha_i$ for any $i \notin C_0$. 
By condition~\ref{cond: 1}, we have $u = \kappa \cdot \alpha$. 
Using condition~\ref{cond: 5}, it is simple to check that $v = u(C_{h_t}) - \kappa = \kappa \cdot \alpha(C_{h_t}) - \kappa = \kappa \cdot \beta$ for any $t = 1, \ldots, \max_{i=1}^n \alpha_i$.
Thus, we obtain that $(u,v) = \kappa \cdot (\alpha, \beta)$, and this concludes the proof that $\alpha^T x = \beta$ is the only hyperplane that contains all the incidence vectors of sets in $\mathscr{S}_1 \cup \mathscr{S}_2 \cup \mathscr{S}_3 \cup \mathscr{S}_4$, which are all in the TOMKS $K$. 
Since the S-MCI $\alpha^T x \leq \beta$ is a valid inequality for $\conv(K)$, we obtain that $\alpha^T x \leq \beta$ is a facet-defining inequality for $\conv(K)$. 
\qed \end{proof}

\section{Proof of Theorem~\ref{theo: cover_clutter}}
\label{subsec: proof_cover_clutter}

Here we restate Theorem~1.1 in \cite{laurent1992characterization}.
%slightly, for the convenience of our proof.
\begin{theorem}[Theorem 1.1 \cite{laurent1992characterization}]
\label{theo: seymour}
If a clutter $\mathscr{L}$ has no $P_4: = \{\{1,2\}, \{2,3\}, \{3,4\}\}$ minor, then 
$$\conv(\{x \in \Z^n_+ \mid x(L) \geq 1, \forall L \in \mathscr{L}\}) = \{x \in \R^n_+ \mid x(L) \geq 1, \forall L \in \mathscr{L}\}$$ 
if and only if $\mathscr{L}$ has no minor isomorphic to one of the following clutters:
\begin{enumerate}
\item $Q_6 = \{\{1,3,5\}, \{1,4,6\}, \{2,3,6\}, \{2,4,5\}\}$;
\item $J_q = \{\{2, \ldots, q\}, \{1, i\} \text{ for } i = 2, \ldots, q\}, q \geq 3$.
\end{enumerate}
\end{theorem}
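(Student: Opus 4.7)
The plan is to reduce the statement to Theorem~\ref{theo: seymour} via a complementation trick, and then exploit the shift-clutter structure of $\mathcal{C}$ to discard all forbidden minors except the $J_q$ family. First I would substitute $y_i := 1 - x_i$. This is a bijection on the box $[0,1]^n$ that carries integer points to integer points, and under it the cover inequality $x(C) \leq |C|-1$ becomes the covering constraint $y(C) \geq 1$. Consequently, the assertion $\conv(K) = \{x \in [0,1]^n \mid x(C) \leq |C|-1,\ \forall C \in \mathcal{C}\}$ is equivalent to the set covering polyhedron $\{y \in \R^n_+ \mid y(C) \geq 1,\ \forall C \in \mathcal{C}\}$ having only integer vertices; the upper bounds $y_i \leq 1$ are automatic at 0/1 optima and can be ignored.

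Next I would verify that the minimal cover clutter $\mathcal{C}$ is a \emph{shift clutter} in the sense of \cite{bertolazzi1987mn}. The driving observation is that, because $A_{\cdot 1} \geq A_{\cdot 2} \geq \ldots \geq A_{\cdot n}$ componentwise, replacing any $i \in C$ by a smaller index $j \notin C$ in a cover $C$ yields a set whose column sum dominates that of $C$, and hence is again a cover. Passing from arbitrary covers to \emph{minimal} covers, this shift operation either preserves minimality or produces a strict super-cover containing some element of $\mathcal{C}$; in either case the defining ``shift'' property of $\mathcal{C}$ follows. It is also standard (and carried out in \cite{bertolazzi1987mn}) that clutter deletion and contraction preserve the shift property, so every minor of $\mathcal{C}$ is again a shift clutter.

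Having established that $\mathcal{C}$ is a shift clutter, the heart of the argument is a small case analysis showing that neither $P_4 = \{\{1,2\},\{2,3\},\{3,4\}\}$ nor $Q_6 = \{\{1,3,5\},\{1,4,6\},\{2,3,6\},\{2,4,5\}\}$ admits any ordering of its ground set under which it is a shift clutter. Consequently $\mathcal{C}$ has no $P_4$ minor and no $Q_6$ minor. The absence of a $P_4$ minor lets us apply Theorem~\ref{theo: seymour}, which then says that the set covering polyhedron of $\mathcal{C}$ is integral if and only if $\mathcal{C}$ has no $Q_6$ minor and no $J_q$ minor. Since $Q_6$ minors are already excluded, the condition collapses to ``$\mathcal{C}$ has no $J_q$ minor for any $q \geq 3$'', and combining with the complementation equivalence of the first paragraph yields the characterization of $\conv(K)$.

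The main obstacle is the combinatorial verification that (i) the minimal cover set of a TOMKS fits the formal definition of a shift clutter in \cite{bertolazzi1987mn}, (ii) shift clutters are closed under deletion and contraction, and (iii) neither $P_4$ nor $Q_6$ can be realized as a shift clutter. Item (iii) is a finite and essentially mechanical check once the shift definition is pinned down, but items (i) and (ii) require careful bookkeeping of how minimality interacts with single-index shifts and with the two minor operations; this is where I would spend the bulk of the proof.
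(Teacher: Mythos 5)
Your proposal does not prove the assigned statement: it proves a different theorem of the paper while invoking the assigned statement as a black box. The statement here is Theorem~\ref{theo: seymour} itself, i.e., the Laurent--Sassano characterization: for an \emph{arbitrary} clutter $\mathscr{L}$ with no $P_4$ minor, the set covering polyhedron $\{x \in \R^n_+ \mid x(L) \geq 1, \forall L \in \mathscr{L}\}$ is integral if and only if $\mathscr{L}$ has no minor isomorphic to $Q_6$ or to some $J_q$. What you outline --- the complementation $y_i = 1 - x_i$, the verification that the minimal cover set of a TOMKS is a shift clutter, the finite check that neither $P_4$ nor $Q_6$ can be a shift clutter, and then an \emph{application} of Theorem~\ref{theo: seymour} --- is a proof of Theorem~\ref{theo: cover_clutter}, and indeed it is essentially the route the paper itself takes for that result in Appendix~\ref{subsec: proof_cover_clutter} (where the paper checks directly that the minimal cover set has no $P_4$ or $Q_6$ minor, applies Theorem~\ref{theo: seymour}, complements variables, and additionally invokes the fact, due to Balas, that $\conv(K)$ coincides with the convex hull of the binary points satisfying all minimal cover inequalities --- a step your first paragraph glosses over). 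Since the very first sentence of your plan is to ``reduce the statement to Theorem~\ref{theo: seymour}'', the proposal is circular with respect to the task: nothing in it addresses why, for a general $P_4$-minor-free clutter, integrality of the covering polyhedron is equivalent to excluding $Q_6$ and the family $J_q$.

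Concretely, a proof of the assigned statement must operate on arbitrary clutters and engage the theory of ideal and minimally non-ideal clutters (e.g., Lehman's theorem, or the direct combinatorial classification carried out by Laurent and Sassano of the minimally non-ideal clutters among those with no $P_4$ minor). None of the ingredients you list --- total orderedness of knapsack columns, shift clutters, cover complementation --- is even meaningful for a general clutter $\mathscr{L}$, so the approach cannot be repaired locally; it is aimed at the wrong target. Note also that the paper itself offers no proof of this statement: it restates it verbatim as Theorem~1.1 of \cite{laurent1992characterization}. The honest options are therefore to cite it, as the paper does, or to reproduce the Laurent--Sassano argument; your proposal does neither for the statement actually assigned.
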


\begin{proof}[Proof of Theorem~\ref{theo: cover_clutter}]
Let $\mathcal C$ be a minimal cover set of a TOMKS $K$.
We have the following claim.
\begin{claim}
\label{claim this one}
$\mathcal C$ has no minor isomorphic to $P_4$ or $Q_6$.
\end{claim}
\begin{cpf}
Let $\{\{i_1, i_2\}, \{i_2, i_3\}, \{i_3, i_4\}\}$ be the subset of one of the minors $M$ of $\mathcal C$, with $\{\{i_1, i_2\}, \{i_2, i_3\}, \{i_3, i_4\}\}$ being isomorphic to $P_4$ and $i_1 < \ldots < i_4$. Then by the TOMKS property of $K$, $M$ should also contain $\{\{i_1, i_3\}, \{i_1, i_4\}, \{i_2, i_4\}\}$. This is a contradiction.

Now assume that $\{\{i_1, i_3, i_5\}, \{i_1, i_4, i_6\}, \{i_2, i_3, i_6\}, \{i_2, i_4, i_5\}\}$ is the subset of one of the minors $M$ of $\mathcal C$, such that it is isomorphic to $Q_6$.
W.l.o.g. we assume that $i_1 < \ldots < i_6$. 
Then $M$ must also contain $\{i_2, i_3, i_5\}$, and this gives us a contradiction. 
For any other possible ordering of the indices, we can obtain a symmetric argument. 
\end{cpf}

From Claim~\ref{claim this one} and Theorem~\ref{theo: seymour}, we obtain the following statement: If $\mathcal C$ is the minimal cover set of a TOMKS $K$, then 
$
\conv(\{y \in \Z^n_+ \mid y(C) \geq 1, \forall C \in \mathcal C\}) = \{y \in \R^n_+ \mid y(C) \geq 1, \forall C \in \mathcal C\}
$
if and only if $\mathcal C$ has no minor isomorphic to a clutter $J_q$, with $q \geq 3$. 
Therefore, all extreme points of the polytope $\{y \in [0,1]^n \mid y(C) \geq 1, \forall C \in \mathcal C\}$ are integral if and only if $\mathcal C$ has no minor isomorphic to $J_q$. 
By substituting the variable $y$ with $1-x$, we know that $\conv(\{x \in \{0,1\}^n \mid x(C) \leq |C| - 1, \forall C \in \mathcal C\}) = \{x \in [0,1]^n \mid x(C) \leq |C| - 1, \forall C \in \mathcal C\}$ if and only if $\mathcal C$ has no minor isomorphic to $J_q$. 
Moreover, it was shown in \cite{balas1972canonical} that, $\conv(K) = \conv(\{x \in \{0,1\}^n \mid x(C) \leq |C| - 1, \forall C \in \mathcal C\})$,
and this completes the proof.
\qed \end{proof}

\section{Proof of Theorem~\ref{prop: suff_full_3}}
\label{subsec: proof_suff_full_3}

\begin{proof}
Let $\pi^T x \leq \pi_0$ be a non-trivial facet-defining inequality for $\conv(K)$. 
Then clearly we have that $\pi \in \R^n_+, \pi_0 > 0$ (see also \cite{hammer1975facet}). W.l.o.g. we can assume $\pi_0 = 1$. Denote $X: = \{x \in K \mid \pi^T x = 1\}$, which has dimension $n-1$ since $\pi^T x \leq 1$ is facet-defining. 
First, we obtain the following claims.
\begin{claim}
\label{claim: charact_1}
$\pi_i = 0$ for all $i = q+1, \ldots, n$.
\end{claim}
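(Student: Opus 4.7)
The plan is to derive a contradiction from the assumption that $\pi_{i^*} > 0$ for some $i^* \in \{q+1,\ldots,n\}$. The key structural input is that every minimal cover of $K$ is contained in $[q]$. The first thing I would establish is the following consequence: for any feasible point $\bar x \in K$ with $\bar x_{i^*} = 0$, the augmented point $\bar x + e^{i^*}$ still belongs to $K$. Indeed, if $\supp(\bar x) \cup \{i^*\}$ were a cover, it would contain some minimal cover $M$, and since $M \subseteq [q]$ by hypothesis we would get $M \subseteq \supp(\bar x)$, contradicting the feasibility of $\bar x$.

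Next I would argue, using the facet dimensionality, the existence of at least one point $\bar x \in X = \{x \in K \mid \pi^T x = 1\}$ with $\bar x_{i^*} = 0$. Suppose, for a contradiction, that every element of $X$ satisfies $x_{i^*} = 1$. Since $\pi^T x \leq 1$ is facet-defining, $X$ has dimension $n-1$, so $\aff(X)$ coincides with the hyperplane $\{x \mid x_{i^*} = 1\}$. Consequently $\{x \mid \pi^T x = 1\}$ equals $\{x \mid x_{i^*} = 1\}$, which forces $\pi$ to be a positive multiple of $e^{i^*}$, i.e., $\pi^T x \leq 1$ is (a scaling of) the bound constraint $x_{i^*} \leq 1$. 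This contradicts the assumed non-triviality of the facet.

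Combining the two observations finishes the proof: applying the first step to the point $\bar x$ produced by the second step yields $\bar x + e^{i^*} \in K$, and then
\[
\pi^T(\bar x + e^{i^*}) = \pi^T \bar x + \pi_{i^*} = 1 + \pi_{i^*} > 1,
\]
which contradicts the validity of $\pi^T x \leq 1$ for $\conv(K)$. Hence $\pi_{i^*} = 0$ for every $i^* > q$, proving the claim. The only subtle step I anticipate is the dimensionality argument in the second paragraph, which hinges on interpreting ``non-trivial'' as excluding the bound constraints; everything else is an immediate consequence of the fact that the indices $\{q+1,\ldots,n\}$ are absent from every minimal cover.
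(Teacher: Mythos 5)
Your argument is correct and follows essentially the same route as the paper: both obtain a point $x'\in X$ with $x'_{i}=0$ via the dimension of the facet, then observe that since no minimal cover contains an index in $[n]\setminus[q]$, the point $x'+e^{i}$ remains feasible, forcing $\pi_{i}=0$. Your version merely makes explicit the non-triviality step that the paper leaves implicit when it asserts that $\{x \mid x_i = 1,\ \pi^T x = 1\}$ has dimension $n-2$.
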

\begin{cpf}
For any $i \in [n] \setminus [q]$, there exists some $x' \in X$ with $x'_i = 0$, since otherwise $X \subseteq \{x \mid x_i = 1, \pi^T x = 1\},$ which has dimension $n-2$. 
If $\supp(x') \cup \{i\}$ is a cover for $K$, then it must contain some minimal cover in $J_q$. 
However, $\supp(x')$ does not contain any minimal cover (since $x'$ is feasible), and $i \in [n] \setminus [q]$ is not contained in any minimal cover. Hence we know that $x' + e^i \in K$, which has $\pi^T (x' + e^i) \leq 1$. 
Because $\pi^T x' = 1$, we obtain that $\pi_i = 0$. 
\end{cpf}

\begin{claim}
\label{claim: charact_2}
$\pi_1 = \ldots = \pi_{p-1} = \pi([n]) - 1> 0$.
\end{claim}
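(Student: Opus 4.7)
The proof plan has two parts: establishing the common value $\pi_i = \pi([n]) - 1$ for every $i \in [p-1]$, and then proving strict positivity. The key structural observation is that every minimal cover of $K$ contains the block $[p-1]$, so that for any $i \in [p-1]$ the set $[n]\setminus\{i\}$ avoids every minimal cover and hence $\chi^{[n]\setminus\{i\}} \in K$.

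The lower bound $\pi_i \geq \pi([n]) - 1$ is immediate from evaluating $\pi^T x \leq 1$ at $\chi^{[n]\setminus\{i\}}$, which gives $\pi([n]) - \pi_i \leq 1$. For the matching upper bound, I plan to argue that $X \not\subseteq \{x_i = 1\}$: otherwise both hyperplanes $\{x_i = 1\}$ and $\{\pi^T x = 1\}$ would contain the $(n-1)$-dimensional set $X$, forcing them to coincide, which would force $\pi = e^i$ and reduce the inequality to the trivial bound $x_i \leq 1$, contradicting non-triviality. Hence some $x' \in X$ satisfies $x'_i = 0$; since $x' \in \{0,1\}^n$ this means $x' \leq \chi^{[n]\setminus\{i\}}$ coordinate-wise. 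Using $\pi \in \R^n_+$ (noted at the start of the proof), we obtain $1 = \pi^T x' \leq \pi^T \chi^{[n]\setminus\{i\}} = \pi([n]) - \pi_i$, yielding $\pi_i \leq \pi([n]) - 1$. Combining the two bounds gives $\pi_i = \pi([n]) - 1$ for every $i \in [p-1]$, so all these coefficients agree.

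For strict positivity I will argue by contradiction. Suppose $\pi([n]) - 1 = 0$, so that $\pi_i = 0$ for every $i \in [p-1]$. Combined with Claim~\ref{claim: charact_1} this gives $\supp(\pi) \subseteq \{p, \ldots, q\}$ with $\sum_{j \in \supp(\pi)} \pi_j = 1$. Then for every $x \in X$, the equality $\pi^T x = 1$ with $\pi \geq 0$ and $x \in [0,1]^n$ forces $x_j = 1$ for each $j \in \supp(\pi)$, so $X \subseteq \bigcap_{j \in \supp(\pi)} \{x_j = 1\}$. If $|\supp(\pi)| \geq 2$ the intersection on the right has dimension at most $n-2$, contradicting $\dim X = n-1$; if $|\supp(\pi)| \leq 1$ the inequality $\pi^T x \leq 1$ degenerates to $0 \leq 1$ or to a trivial bound $x_{j_0} \leq 1$. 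Either outcome contradicts the non-triviality of the facet, so $\pi([n]) > 1$ and in particular each $\pi_i > 0$.

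I expect the main obstacle to be the dimensional step that produces an element of $X$ with $x_i = 0$: this is precisely where non-triviality of the facet is invoked, echoing the analogous step in the proof of Claim~\ref{claim: charact_1}. The remaining components of the argument then reduce to simple monotonicity of $\pi^T x$ under coordinate-wise domination by $\chi^{[n]\setminus\{i\}}$.
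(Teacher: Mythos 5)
Your proof is correct and follows essentially the same route as the paper's: both rest on the feasibility of $\chi^{[n]\setminus\{i\}}$ for $i\in[p-1]$ (since every minimal cover contains $[p-1]$) together with the dimension argument producing a point $x'\in X$ with $x'_i=0$, which pins down $\pi_i=\pi([n])-1$. The only cosmetic difference is in the positivity step, where the paper assumes $\pi_i=0$ and concludes that $\pi^Tx\leq 1$ is dominated by the bound constraints, while you make the same degeneracy explicit via the support of $\pi$ and a dimension count; the two arguments are interchangeable.
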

\begin{cpf}
Suppose $\pi_i = 0$ for some $i \in [p-1]$. From the minimal cover structure of $K$, we know that $[n] \setminus \{i\}$ is not a cover for $K$. 
Hence $\pi([n]) = \pi([n] \setminus \{i\}) \leq 1 = \pi_0$, which means that $\pi^T x \leq 1$ is dominated by the bound constraints $x_j \leq 1, \forall j \in [n]$, a contradiction.
Furthermore, for any $i \in [p-1]$, there exists some point $x' \in X$ with $x'_i = 0$, since otherwise $X \subseteq \{x \mid x_i = 1, \pi^T x = \pi_0\}$ which has dimension $n-2$. Hence $1 = \pi^T x' \leq \pi([n] \setminus \{i\}) \leq 1$, which gives $\pi_i = \pi([n]) - 1$, for any $i \in [p-1]$. 
\end{cpf}

\begin{claim}
\label{claim: charact_3}
If $\pi_p > 0$, then $\pi([p]) = 1$.
\end{claim}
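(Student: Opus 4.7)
The plan is to prove the claim by contradiction. Assume $\pi_p > 0$ and $\pi([p]) < 1$ (note that $\pi([p]) \leq 1$ automatically holds since $\chi^{[p]} \in K$, because $[p]$ contains no minimal cover). I will derive a contradiction with the facet property $\dim X = n-1$, where $X := \{x \in \conv(K) \mid \pi^T x = 1\}$. First, both $X_0 := X \cap \{x_p = 0\}$ and $X_1 := X \cap \{x_p = 1\}$ must be nonempty: if, say, $X \subseteq \{x_p = 1\}$, then $X$ sits inside $\{\pi^T x = 1\} \cap \{x_p = 1\}$, which has dimension $n-2$ since $\pi$ is not proportional to $e^p$ (by Claim~\ref{claim: charact_2}, $\pi_i = s > 0$ for $i \in [p-1]$), contradicting $\dim X = n-1$.

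The key structural step is to show that every vertex $y$ of $X$ with $y_p = 0$ satisfies $[p-1] \subseteq \supp(y)$, while every vertex $y$ of $X$ with $y_p = 1$ satisfies $|[p-1] \setminus \supp(y)| = 1$. For the first claim, if some $i \in [p-1]$ were missing from $\supp(y)$, then $y + e^i$ would be feasible in $K$ (its support still misses an element of $[p-1]$, preventing any minimal cover from being contained; the only degenerate case, $|[p-1] \setminus \supp(y)| = 1$ and $\{p+1,\ldots,q\} \subseteq \supp(y)$, forces $\pi_p = 0$ via $\pi(y) = 1$ and $\pi([n]) = s+1$, contradicting $\pi_p > 0$), but then $\pi(y + e^i) = 1 + s > 1$, contradicting $\pi^T x \leq 1$. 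For the second claim, $[p-1] \subseteq \supp(y)$ combined with $y_p = 1$ gives $[p] \subseteq \supp(y)$, and either $\supp(y) \cap \{p+1,\ldots,q\} = \emptyset$ (yielding $\pi(y) = \pi([p]) < 1$, contradicting $y \in X$) or $\supp(y) \cap \{p+1,\ldots,q\} \neq \emptyset$ (containing the minimal cover $[p] \cup \{j\}$, infeasible); the bound $|[p-1] \setminus \supp(y)| \leq 1$ follows from the same adding-element argument. These facts propagate to all points of $X_0$ and $X_1$ by convexity: $y_i = 1$ for $i \in [p-1]$ for every $y \in X_0$, and $\sum_{i \in [p-1]} y_i = p-2$ for every $y \in X_1$.

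Fixing $x_0 \in X_0$ and $x_1 \in X_1$ and setting $L_0 := \lin\{y - x_0 \mid y \in X_0\}$ and $L_1 := \lin\{y - x_1 \mid y \in X_1\}$, the structural facts force $L_0 \subseteq \{v \mid v_i = 0 \ \forall i \in [p]\}$ and $L_1 \subseteq \{v \mid v_p = 0,\ \sum_{i \in [p-1]} v_i = 0\}$. Combined with $\pi^T v = 0$ (which holds for any difference of points of $X$), we obtain $L_0 + L_1 \subseteq \{v \mid v_p = 0,\ \sum_{i \in [p-1]} v_i = 0,\ \pi^T v = 0\}$. These three constraints are linearly independent, since $\pi(\{p+1,\ldots,q\}) > 0$: otherwise $\pi_p = 1 - (p-2)s$ by $\pi([n]) = s+1$, giving $\pi([p]) = 1 + s > 1$, a contradiction. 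Hence $\dim(L_0 + L_1) \leq n - 3$. Since $x_1 - x_0$ has $v_p = 1 \neq 0$ while $L_0 + L_1 \subseteq \{v_p = 0\}$, the direction $x_1 - x_0$ contributes one more dimension, yielding $\dim X = \dim(L_0 + L_1) + 1 \leq n - 2$, contradicting $\dim X = n-1$.

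The main obstacle is the careful verification of the structural lemmas on vertices of $X$ lying in $X_0$ and $X_1$, particularly handling the degenerate case in the adding-element argument where $|[p-1] \setminus \supp(y)| = 1$ and $\{p+1,\ldots,q\} \subseteq \supp(y)$; once the structural properties are in place, the codimension-$3$ confinement of $L_0 + L_1$ produces the dimension contradiction cleanly.
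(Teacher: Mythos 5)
Your proof is correct, but it runs in the opposite direction from the paper's and is considerably more elaborate. The paper argues directly: since $\sum_{i=1}^p x_i \leq p-1$ is violated by $\chi^{[p]} \in K$, the hyperplane $\sum_{i=1}^p x_i = p-1$ cannot contain the $(n-1)$-dimensional set $X$, so some $x' \in X$ has $x'([p]) \neq p-1$; the case $x'([p]) \leq p-2$ is killed in one line using Claim~\ref{claim: charact_2} and $\pi_p>0$ (two missing indices in $[p]$ force $\pi^T x' \leq 1-\pi_j < 1$), leaving $[p] \subseteq \supp(x')$, whence feasibility forces $\supp(x') \cap [q] = [p]$ and $\pi([p]) = \pi^T x' = 1$. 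You instead assume $\pi([p]) < 1$, which is precisely the assumption that forbids such a witness point, and then show that the absence of these witnesses confines every binary point of $X$ to satisfy $x_p + $ (a fixed value of $x([p-1])$ on each fiber), yielding a codimension-$2$ confinement of $\aff(X)$ and a dimension contradiction. Both proofs lean on $\dim X = n-1$, but the paper spends it once to produce a single well-chosen point, while you spend it at the end against a global structural constraint on all of $X$; your route costs you the fiber analysis (including the degenerate case with $\{p+1,\dots,q\} \subseteq \supp(y)$, which you handle correctly) and the three-functional independence check. Incidentally, your structural facts already imply $y([p]) = p-1$ for every $y \in X$, so you could have closed the argument with a two-hyperplane count rather than the three-constraint one.

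One small caveat: for $p=1$ your justification that $\pi$ is not proportional to $e^p$ (``by Claim~\ref{claim: charact_2}, $\pi_i = s > 0$ for $i \in [p-1]$'') is vacuous, and your structural fact for $X_1$ (``$|[p-1]\setminus\supp(y)|=1$'') becomes unsatisfiable. The argument still survives --- non-triviality of the facet rules out $\pi \propto e^1$, and the unsatisfiability of the $X_1$ condition contradicts $X_1 \neq \emptyset$ directly --- but you should say so explicitly rather than fall through to the dimension count, which as written presumes $p \geq 2$.
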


\begin{cpf}
Since $\sum_{i=1}^p x_i = p-1$ is not valid for $K$, and $X$ is $(n-1)$-dimensional, we know that there exists $x' \in X$ with $x'([p]) = p$ or $p-2$. 
If $x'([p]) = p-2$, then there must exist $i,j \in [p]$ such that $x'_i = x'_j = 0$. From Claim~\ref{claim: charact_2} and the assumption of $\pi_p > 0$, we know it's impossible. 
Hence there exists $x' \in X$ with $x'_i = 1$ for any $i \in [p]$. 
From the minimal cover set $K$, we know $x'_j = 0$ for any $j \in [q] \setminus [p]$.
Therefore, $1 = \pi^T x' = \pi([p]) = 1$.
\end{cpf}

Next, we consider three different cases.
\begin{enumerate}
\item Case $\pi_p = 0$.
In this case, we want to show that $\pi^T x \leq 1$ is the same as the CI $\sum_{i=1}^{p-1} x_i + \sum_{i=p+1}^q x_i \leq q-2$.
First, we prove that $x([q]) - x_p = q-2$ for any $x \in X$. If not, then there exists some $x' \in X$ and $i,j \in [q] \setminus \{p\}$, such that $x'_i = x'_j = 0$. 
We construct a new point $x''$ from $x'$ by switching the $j$-th component from 0 to 1, and setting $x''_p = 0$.
Since $x''_p = x''_i = 0$, we know that $x'' \in K$.
Note that $1 \geq \pi^T x'' = \pi^T x' + \pi_j = 1+\pi_j$, thus we have $\pi_j = 0$. 
Hence $\pi([n] \setminus \{p, j\}) = \pi([n])$. 
However, $[n] \setminus \{p, j\}$ is not a cover, and we obtain $\pi([n]) \leq 1$, which means that the inequality $\pi^T x \leq 1$ is dominated by the bound constraints, a contradiction.
Therefore $x([q]) - x_p = q-2$ for any $x \in X$. 
Since $X = \{x \in K \mid \pi^T x = 1\}$ is assumed to have dimension $(n-1)$, we know that $\pi^T x \leq 1$ must be the same inequality as the CI: $\sum_{i=1}^{p-1} x_i + \sum_{i=p+1}^q x_i \leq q-2$.

\item Case $\pi_p > 0$, and there exists $x' \in X$ with $x'_p = 0$, such that $x'_i = 1$ for any $i \in \supp(\pi) \cap \{p+1, \ldots, q\}$.
In this case consider such point $x'$.
From Claim~\ref{claim: charact_2}, we have $\pi([n] \setminus \{i\}) = 1$ for any $i \in [p-1]$, thus we know that $x'_i = 1$ for any $i \in [p-1]$. 
Hence $\pi^T x' = \pi([n]) - \pi_p = 1$, from Claim~\ref{claim: charact_2} we have $\pi_1 = \ldots = \pi_{p-1} = \pi_p$. 
Also by Claim~\ref{claim: charact_3}, we have $\pi_1 = \ldots = \pi_p = \frac{1}{p}$. 
So the original inequality $\pi^T x \leq 1$ is just $(\frac{1}{p}, \ldots, \frac{1}{p}, \pi_{p+1}, \ldots, \pi_q, 0, \ldots, 0)^T x \leq 1$, and from Claim~\ref{claim: charact_2}, there is $\sum_{i=p+1}^q \pi_i = 1-\frac{1}{p} \cdot (p-1) = \frac{1}{p}$. 
Multiplying each CI $\sum_{i=1}^p x_i + x_j \leq p$ by a  non-negative number $\pi_j$ for each $j = p+1, \ldots, q$, and summing them up, we obtain that our facet-defining inequality $\pi^T x \leq 1$ is dominated by the CIs $\sum_{i=1}^p x_i + x_j \leq p \text{ for } j=p+1, \ldots, q$.
This means that $\pi^T x \leq 1$ coincides with one of these CIs.

\item 
Case $\pi_p > 0$, and for any $x' \in X$ with $x'_p = 0$, there is $x'_j = 0$ for some index $j \in \supp(\pi) \cap \{p+1, \ldots, q\}$.
In this case, we have the following claim.
\begin{claim}
\label{claim: charact_4}
$\pi_{p+1} = \ldots = \pi_q = \pi_1 - \pi_p >0$.
\end{claim}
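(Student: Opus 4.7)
The plan is to prove $\pi_{p+1} = \ldots = \pi_q = \pi_1 - \pi_p > 0$ in three stages: derive a tight lower bound on each $\pi_i$, use the case hypothesis together with the non-triviality of the facet to force $\pi_1 > \pi_p$, and finally leverage the facet-defining property of $\pi^T x \leq 1$ through a dimension count to conclude the equalities.

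First, for each $i \in \{p+1, \ldots, q\}$ I will consider the point $y_i := \chi^{[p-1] \cup (\{p+1, \ldots, q\} \setminus \{i\})}$. Since $\supp(y_i)$ contains neither $p$ nor $i$, it cannot include any of the minimal covers $\{1, \ldots, p, j\}$ or $\{1, \ldots, p-1, p+1, \ldots, q\}$, hence $y_i \in K$. Using Claims~\ref{claim: charact_1}--\ref{claim: charact_3} together with the identity $\sum_{j=p+1}^q \pi_j = \pi([n]) - \pi([p]) = \pi_1$, I obtain $\pi^T y_i = p\pi_1 - \pi_i$, so feasibility $\pi^T y_i \leq 1 = (p-1)\pi_1 + \pi_p$ yields $\pi_i \geq \pi_1 - \pi_p$ for every $i \in \{p+1, \ldots, q\}$.

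Next I will show that a 0-1 point $x' \in X$ with $x'_p = 0$ must exist and that the case hypothesis then forces $\pi_1 > \pi_p$. If no 0-1 vertex of $X$ had $x_p = 0$, then $X$ would lie in the hyperplane $\{x_p = 1\}$; since $\dim X = n-1$ equals the dimension of this hyperplane, the affine hulls would coincide, making $\pi$ proportional to $e^p$ and contradicting $\pi_i > 0$ for $i \in [p-1]$ from Claim~\ref{claim: charact_2}. Analyzing any such $x'$ yields $\supp(x') \supseteq [p-1]$ and $\sum_{j \in \supp(x') \cap \{p+1, \ldots, q\}} \pi_j = \pi_p$, with $\supp(x') \cap \{p+1, \ldots, q\}$ a strict subset of $\{p+1, \ldots, q\}$. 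Rewriting as $\sum_{j \in \{p+1, \ldots, q\} \setminus \supp(x')} \pi_j = \pi_1 - \pi_p \geq 0$ shows $\pi_1 \geq \pi_p$; the boundary case $\pi_1 = \pi_p$ would force all missing $\pi_j$'s to vanish, giving $I \subseteq \supp(x')$ and contradicting the case hypothesis. Hence $\pi_1 > \pi_p$, and therefore $\pi_i \geq \pi_1 - \pi_p > 0$ for every $i \in \{p+1, \ldots, q\}$.

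Finally, with positivity in hand I will classify the 0-1 vertices of $X$: those with $x_p = 1$ are exactly $\chi^{[p] \cup T}$ and $\chi^{([q] \setminus \{i\}) \cup T}$ for $i \in [p-1]$ and $T \subseteq \{q+1, \ldots, n\}$, while those with $x_p = 0$ take the form $\chi^{[p-1] \cup S \cup T}$ with $S \subsetneq \{p+1, \ldots, q\}$ and $\sum_{j \in S} \pi_j = \pi_p$. Working in the coordinate decomposition $[n] = [p-1] \cup \{p\} \cup \{p+1, \ldots, q\} \cup \{q+1, \ldots, n\}$, the vertex-differences from the $x_p = 1$ family span exactly $(p-1) + (n-q)$ dimensions of the tangent space of $X$, so the $x_p = 0$ family must supply the remaining $q-p$ dimensions. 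A case analysis on $|S|$ using $\pi_j \geq \pi_1 - \pi_p > 0$ and $\sum_j \pi_j = \pi_1$ shows that the only admissible $S$ satisfy $|S| = q-p-1$, i.e., $S = \{p+1, \ldots, q\} \setminus \{j\}$ with $\pi_j = \pi_1 - \pi_p$; setting $Z := \{j : \pi_j = \pi_1 - \pi_p\}$, a rank computation on the matrix with columns $(1, \chi^S)$ for admissible $S$ shows the corresponding directions contribute exactly $|Z|$ new dimensions, so the dimension requirement $|Z| \geq q-p$ forces $\pi_j = \pi_1 - \pi_p$ for every $j \in \{p+1, \ldots, q\}$. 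The main obstacle is this final dimension count: one must correctly enumerate the two vertex families, compute their tangent-space contributions in the coordinate decomposition, and show via a rank argument that the admissible subsets $S$ can supply $q-p$ independent directions only when every $\pi_j$ equals the common value $\pi_1 - \pi_p$.
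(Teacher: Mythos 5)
Your proof is correct, but it takes a genuinely different route from the paper's at the decisive step. Your first two stages recover the same two facts the paper establishes, in the opposite order: you first derive the lower bound $\pi_i \ge \pi_1-\pi_p$ from feasibility of $\chi^{[p-1]\cup(\{p+1,\ldots,q\}\setminus\{i\})}$ and then use the Case-3 hypothesis to force $\pi_1>\pi_p$, whereas the paper first proves $\pi_i>0$ directly (if some $\pi_i=0$, zero out coordinate $i$ of a point of $X$ with $x_p=0$ and flip on a coordinate $j$ with $\pi_j>0$ and $x'_j=0$ supplied by the case hypothesis, producing a feasible point that violates the inequality) and only then obtains $\pi_i\ge\pi_1-\pi_p$ from the fact that $[q]\setminus\{p,i\}$ is not a cover. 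The real divergence is the upgrade from $\ge$ to $=$. The paper does this in two lines: if $\pi_p+\pi_{i'}>\pi_1$ for some $i'$, then $\pi([q]\setminus\{p,i'\})<1$ forces $\sum_{i=1}^p x_i+x_{i'}=p$ on all of $X$, so the facet coincides with that CI, whose remaining coefficients on $[q]\setminus[p]$ vanish --- impossible because $p\le q-2$ and every $\pi_j>0$. You instead enumerate all 0-1 points of $X$ and run an explicit affine-dimension count, showing the $x_p=0$ points can contribute the $q-p$ missing directions only if every $j\in\{p+1,\ldots,q\}$ attains the lower bound; I checked the vertex classification and the rank computation and they go through. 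The paper's argument is shorter and reuses the ``facet coincides with a weaker valid inequality'' trick already deployed in Cases 1 and 2, while yours is more self-contained and yields the full vertex description of the facet as a by-product. One small patch: when $p=1$ your justification for the existence of a point of $X$ with $x_p=0$ (a contradiction with Claim~\ref{claim: charact_2}) is vacuous, since $[p-1]=\emptyset$; appeal instead to the non-triviality of the facet, i.e., $\pi$ cannot be proportional to $e^p$ because $x_p\le 1$ is a bound constraint.
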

\begin{cpf}
First, we show that $\pi_i > 0$ for any $i \in [q] \setminus [p]$. If not, there is $i \in [q] \setminus [p]$ such that $\pi_i = 0$. 
Arbitrarily pick a point $x' \in X$ with $x'_p = 0$. By the assumption of this case, then there must exist $j \in [q] \setminus [p]$ such that $x'_j = 0$ and $\pi_j > 0$.
Then we construct another point $x''$ from $x'$ by setting the $i$-th component to 0. 
Since $\pi_i = 0$, we have $\pi^T x'' = \pi^T x' = 1$, where $x''_p = x'_p = 0, x''_j = x'_j = 0, x''_i = 0$. 
Note that $x'' + e^j$ is also a feasible point in $K$.
However, $1 =\pi^T x'' < \pi^T (x'' + e^j)$, a contradiction. 

Next, we show that $\pi_i = \pi_1 - \pi_p$ for any $i  \in  [q] \setminus [p]$. 
Note that since $[q] \setminus \{p\}$ is a minimal cover, we know that $[q] \setminus \{p, i\}$ is not a cover for any $i \in [q] \setminus [p]$. 
Hence $\pi([q] \setminus \{p, i\}) \leq 1$. Then from Claim~\ref{claim: charact_1} and \ref{claim: charact_2}, we obtain $\pi_p + \pi_i \geq \pi_1$, for any $i \in [q] \setminus [p]$.
If for some $i' \in [q] \setminus [p]$ there is $\pi_p + \pi_{i'} > \pi_1$, then $\pi([q] \setminus \{p, i'\}) < 1$, which yields that $\sum_{i=1}^p x_i + x_{i'} = p$ for any $x \in X$.
Thus $\pi^T x \leq 1$ coincides with the CI $\sum_{i=1}^p x_i + x_{i'} \leq p$. 
However, we have shown that $\pi_i > 0$ for any $i \in [q] \setminus [p]$, and this gives a contradiction because $p \leq q-2$. 
Therefore, for any $i \in [q] \setminus [p]$, we have $\pi_p + \pi_i  = \pi_1$.
\end{cpf}
Let $\pi_1: = \lambda$, then Claim~\ref{claim: charact_1} gives $\pi_{q+1} = \ldots = \pi_n = 0$, Claim~\ref{claim: charact_2} gives $\pi_1 = \ldots = \pi_{p-1} = \lambda$, Claim~\ref{claim: charact_3} gives $\pi_p = 1-(p-1)\lambda$, and Claim~\ref{claim: charact_4} gives $\pi_{p+1} = \ldots = \pi_q = p \lambda - 1$. 
Also from Claim~\ref{claim: charact_2}, we have $\pi([n]) - \lambda = 1$, hence: $\lambda = (q-p)(p \lambda - 1)$, which gives $\lambda = \frac{q-p}{p(q-p)-1}$. Therefore, the original facet-defining inequality $\pi^T x \leq 1$ coincides with $ (q-p) \sum_{i=1}^{p-1} x_i + (q-p-1)x_p + \sum_{i=p+1}^q x_i \leq p(q-p)-1$. 
Note that for the multi-cover $\{\{1, \ldots, p-1, p+1, \ldots, q\}, \{1,\ldots, p,q\}\}$, the inequality has the same structure as the one given in Example~\ref{exam: 3}, and the corresponding MCI is $ (q-p) \sum_{i=1}^{p-1} x_i + (q-p-1)x_p + \sum_{i=p+1}^q x_i \leq p(q-p)-1$.
\end{enumerate}
The above discussion concludes the proof. 
\qed \end{proof}

\section{Separation Formulation in Section~\ref{sec: separation}}
\label{subsec: additional_formulation_sep}

Let $\C$ be a cover-family.
From the definition of multi-cover, $\C$ is a multi-cover if and only its skeleton $\mathscr{S}$ satisfies the property: For any $ T \subseteq \cup_{S \in \mathscr S} S$, there exists some $S' \in \mathscr S$ such that $T$ is comparable with $S'$.
Henceforth all skeletons are assumed to have such property.
Now for a given skeleton $\mathscr{S} := \{S_1, \ldots, S_k\}$ and a fractional solution $\tilde x$, we consider the following MIP. 
We let $S := \cup_{i=1}^k S_i$ and $\bar S_h := S \setminus S_h$ for any $h \in [k]$. 

\begin{align}
\label{Sep}
\tag{Sep-$\S$}
\begin{split}
\min \quad & t + \sum_{i \in [n]} \gamma_i - \sum_{i \in [n]} (\gamma_i + \sum_{s \in S} \alpha^s_i) \tilde x_i \\
\st \quad & t \geq \sum_{i \in [n]} \sum_{s \in S_h} \alpha^s_i, \quad \forall h \in [k]\\
& \sum_{s \in S} u^s_i + w_i \leq 1, \ \alpha^s_i \leq M u^s_i, \ \gamma_i \leq M w_i \quad \forall i \in [n] \\
& \sum_{i \in [n]} u^s_i = 1, \quad \forall s \in S \\
& \alpha^s_i \geq \alpha^{s'}_j + (1+M) u^s_i -M, \quad \forall i,j \in [n], j > i, s \in S, s' \in \Pi(s) \\
& w_i = \sum_{h \in [k]} w^h_i, \quad i \in [n] \\
& \gamma_i \geq \sum_{j>i} \sum_{s \in \bar S_h} \alpha^s_j + (1+n |\bar S_h| M)w^h_i - n|\bar S_h| M, \quad \forall i \in [n], h \in [k] \\
& \gamma_i \geq \sum_{s \in \bar S_h} \alpha^s_j + |\bar S_h| M w^h_i -  |\bar S_h| M, \quad \forall i,j \in [n], j < i, h \in [k] \\
& \sum_{i \in [n]} (\sum_{s \in S_h} u^s_i + w_i) A_{j,i} \geq (b_j+1) \lambda^h_j, \quad \forall h \in [k], j \in [m] \\
& u^s_i + \sum_{j<i} u^{s+1}_j \leq 1, \quad \forall s = 1, \ldots, |S| - 1 \\
& \sum_{j=1}^m \lambda^h_j \geq 1, \quad \forall h \in [k] \\
& u^s_i \in \{0,1\}, \alpha^s_i \in \Z,\ w_i, w^h_i, \lambda^h_j \in \{0,1\}, \quad \forall s \in S, i \in [n], j \in [m], h \in [k].
\end{split}
\end{align}

Here $\Pi(s): = \{s' \in S \mid \exists h \in [k], \st \ s \in S_h, s' \notin S_h, s' > s\}.$ 
Then, the separation problem of MCIs with skeleton $\mathscr{S}$ can be solved exactly using the MIP \eqref{Sep}.

\begin{theorem}
\label{theo: general_skeleton_sep}
Given a TOMKS $K$. For any given skeleton $\S$ and a point $\tilde x$,
there exists a multi-cover $\C$ whose skeleton is $\S$ and an associated MCI that separates $\tilde x$ from $K$ if and only if \eqref{Sep} admits an optimal value less than 1.
\end{theorem}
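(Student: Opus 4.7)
The plan is to follow closely the structure of the proof of Theorem~\ref{theo: sep_2}, establishing a correspondence between feasible solutions of \eqref{Sep} with objective value below $1$ and pairs consisting of a multi-cover with skeleton $\S$ together with an associated MCI that separates $\tilde x$ from $K$.

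First I would fix a dictionary between the MIP variables and the combinatorial objects. For each skeleton position $s \in S := \cup_{h=1}^{k} S_h$, the binary $u^s_i$ encodes that the $s$-th element of $C \setminus C_0$ (listed in increasing order of $[n]$) equals $i$; $w_i$ indicates $i \in C_0$; and $\lambda^h_j$ flags a knapsack row violated by $\chi^{C_h}$. The integer $\alpha^s_i$ is the MCI coefficient of $i$ when $i$ occupies skeleton position $s$, while $\gamma_i$ is the MCI coefficient when $i \in C_0$. Under this encoding, $C_h = \{i \in [n] \mid \sum_{s \in S_h} u^s_i + w_i = 1\}$, and the MCI vector is $\alpha = \gamma + \sum_s \alpha^s$.

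Next I would verify that each group of constraints enforces the right requirement. The equalities $\sum_i u^s_i = 1$ together with $\sum_s u^s_i + w_i \leq 1$ assign each skeleton position to a distinct index and keep $C_0$ disjoint from $C \setminus C_0$, while $u^s_i + \sum_{j<i} u^{s+1}_j \leq 1$ forces the map $s \mapsto i$ to be order-preserving, so that the resulting discrepancy family has skeleton exactly $\S$. The big-$M$ inequalities $\alpha^s_i \geq \alpha^{s'}_j + (1+M)u^s_i - M$ for $j > i$ and $s' \in \Pi(s)$ mirror Step~\ref{step: MCI_4} of Algorithm~\ref{alg:mci}, once one checks that $\Pi(s)$ is precisely the set of skeleton positions $s' > s$ with $s' \in \bar S_h$ for some $h$ with $s \in S_h$. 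The split $w_i = \sum_h w^h_i$, together with the two families of lower bounds on $\gamma_i$, encodes Step~\ref{step: modified_coe_intersection}: since $\gamma_i$ enters the objective with nonnegative coefficient $1 - \tilde x_i$, an optimal solution will pick the $h$ making the bracketed quantity smallest, which is the minimizer in the outer $\min_h$. Finally, $t \geq \sum_{s \in S_h}\sum_i \alpha^s_i$ combined with the objective makes the objective value equal to $\max_h \alpha(C_h) - \alpha^T \tilde x = \beta + 1 - \alpha^T \tilde x$, so it is strictly below $1$ precisely when $\alpha^T \tilde x > \beta$.

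For the converse direction, given a multi-cover $\C = \{C_1,\ldots,C_k\}$ with skeleton $\S$ and an associated separating MCI $\alpha^T x \leq \beta$, I would read off $u^s_i, w_i, \alpha^s_i, \gamma_i$ from the identifications above, pick $\lambda^h_{j(h)} = 1$ for some $j(h)$ violated by $\chi^{C_h}$, and set $w^{h^*(i)}_i = 1$ where $h^*(i)$ attains the minimum of Step~\ref{step: modified_coe_intersection} at $i$; all MIP constraints are then easily checked. To conclude that any MIP-feasible solution yields a genuine multi-cover, I would invoke the standing assumption on $\S$, since the comparability property in Definition~\ref{def:multi-cover} depends only on the skeleton. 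The main technical obstacles I foresee are twofold: a careful case analysis confirming that $\Pi(s)$ faithfully captures Step~\ref{step: MCI_4} and that the $\gamma_i$ block correctly realizes the outer $\min_h$ of Step~\ref{step: modified_coe_intersection}; and selecting a constant $M$ large enough to dominate all $\alpha$ coefficients, for which a polynomial-in-$|C|$ upper bound on MCI coefficients, derivable inductively from Algorithm~\ref{alg:mci}, suffices.
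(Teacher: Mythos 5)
Your proposal is correct and follows essentially the same route as the paper's proof: a variable-by-variable dictionary between the MIP and the pair (multi-cover with skeleton $\S$, associated MCI), followed by checking that each constraint block encodes the corresponding step of Algorithm~\ref{alg:mci} and that the objective equals $\beta+1-\alpha^T\tilde x$. Your additional remarks on the converse construction, the choice of $M$, and the fact that the comparability condition depends only on the skeleton are consistent with (and slightly more explicit than) the paper's argument.
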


\begin{proof}
% Let $(\tilde t, \tilde{\alpha}^s_i, \tilde \gamma_i, \tilde{u}^s_i, \tilde w_i, \tilde{\lambda}^h_j)$ be an optimal solution for \eqref{Sep}, with respect to the fractional point $\tilde x$. 
In \eqref{Sep}, binary $u^s_i$ denotes whether or not the index $i$ of variables corresponds to the index $s$ in the skeleton; $\alpha^s_i$ denotes the MCI coefficient of variable $x_i$, when $u^s_i$ = 1; Binary $w_i$ denotes whether or not variable index $i$ appears in the intersection of the multi-cover; $\gamma_i$ denotes the MCI coefficient of variable $x_i$, when $w_i$ = 1; For any $h \in [k]$ and $j \in [m]$, binary $\lambda^h_j$ denotes whether or not the cover $C_h$ corresponding to the skeleton $S_h$ violates the $j$-th knapsack constraint of the problem; $ t + \sum_{i \in [n]} \gamma_i $ represents the maximum value of $\alpha(C_h), h \in [k]$. 
Therefore, the associated MCI is represented by the inequality:
$$
 \sum_{i \in [n]} (\gamma_i + \sum_{s \in S} \alpha^s_i) x_i \leq t + \sum_{i \in [n]} \gamma_i - 1.
$$
Hence the optimal value of \eqref{Sep} is strictly less than 1 if and only if the MCI is a separating inequality. 

Now we verify that the meaning of those variables can be formulated by the constraints in \eqref{Sep}. 
For each index $s \in S$, since it only corresponds to one variable index $i \in [n]$, so we have constraint $\sum_{i \in [n]} u^s_i = 1, \forall s \in S$. Similarly, for each index $i \in [n]$, since it either appears in the intersection of the covers, or corresponds to a single index $s \in S$, or is not contained by any cover, so we have constraint $ \sum_{s \in S} u^s_i + w_i \leq 1, \forall i \in [n]$. Constraint $\alpha^s_i \geq \alpha^{s'}_j + (1+M) u^s_i -M, \forall j > i, s \in S, s' \in \Pi(s)$ formulates the Step~\ref{step: MCI_4} of Algorithm~\ref{alg:mci}, and constraints $\gamma_i \geq \sum_{j>i} \sum_{s \in \bar S_h} \alpha^s_j + (1+n |\bar S_h| M)w^h_i - n|\bar S_h| M, \forall i \in [n], h \in [k],  \gamma_i \geq \sum_{s \in \bar S_h} \alpha^s_j + |\bar S_h| M w^h_i -  |\bar S_h| M, \forall j < i, h \in [k]$ as well as $w_i = \sum_{h \in [k]} w^h_i$ formulate the Step~\ref{step: modified_coe_intersection} of Algorithm~\ref{alg:mci}. Constraint $u^s_i + \sum_{j<i} u^{s+1}_j \leq 1, \forall s = 1, \ldots, |S| - 1$ formulates the bijective relation between skeleton and the discrepancy family of multi-cover: if index $i \in [n]$ corresponds to the skeleton index $s \in S$, then for any skeleton index $s' > s$, it only corresponds to the index $j > i$. The remaining constraints are easy to interpret. 
\qed \end{proof}

\section{Numerical Results Report}
\label{subsec: detailed_data}
The following two tables present the detailed results for the optimality gap we obtained from solving different linear relaxation problems. 

\begin{table}[H]
\centering
% \caption{Optimality gap of the corresponding cutting-plane closure.}
\vspace{-0.4cm}
\begin{tabular}{  p{1.6cm} | p{0.8cm} | p{1cm} p{1cm} p{1cm} | p{1cm} p{1cm} p{1cm} }
\hline
(n, m, seed) & LP   & MCI & E-MCI & L-MCI &  CI & ECI & LCI \\ 
\midrule
(20, 1, 1) & 1.05   & 0.81   &  0.58 & 0.48  & 0.87  &  0.59 &  0.59 \\
(20, 1, 2) & 3.7   & 1.92  & 0.17  &  0 & 2.55  &  0.82 &  0.82 \\
(20, 1, 3) &  0.87  &  0.24 & 0.24  & 0.22  & 0.68  & 0.63  & 0.63  \\
(20, 1, 4) & 1.78   &  0 & 0  & 0  & 1.00  & 0.16  & 0  \\
(20, 1, 5) & 6.15   &  1.06 & 1.06  & 0.14  &  2.79 & 2.07  & 2.07  \\
(20, 1, 6) & 9.90    & 5.69  & 0.22  & 0  & 6.47  & 1.01  & 1.01  \\
(20, 1, 7) &   7.29  & 0  &  0 & 0  &  0 &  0 & 0  \\
(20, 1, 8) & 3.06   & 0.41  & 0.40  & 0.29  & 1.99  & 0.54  &  0.54 \\
(20, 1, 9) & 1.48   & 0.65  &  0 & 0 & 0.87  & 0.49  & 0.49  \\
(20, 1, 10) & 4.03   & 0  & 0  & 0  & 2.81  &  0.15 & 0.15  \\
\midrule
average & 3.93   & 1.08  & 0.27  & 0.11  & 2.00  & 0.65  & 0.63  \\
\bottomrule
(20, 2, 1) & 3.37   & 1.79  & 0.89  & 0.60  & 1.83  & 1.50  & 0.83  \\
(20, 2, 2) &  12.50   & 7.18  &  3.16 & 0 & 8.68  & 4.98  &  0 \\
(20, 2, 3) &  1.45  &  0.27 & 0.05  & 0  & 0.81  & 0.10  & 0.08  \\
(20, 2, 4) & 3.96   &  1.66 &  0.65 & 0.10  &  2.79 & 0.83  &  0.83 \\
(20, 2, 5) & 3.15   &  1.65  & 1.39   & 1.03  & 2.04  & 1.65  &  1.65 \\
(20, 2, 6) &  5.43  & 1.73  & 0  & 0  & 4.34  &  0 &  0 \\
(20, 2, 7) &  9.63  & 3.69  & 0.23  & 0  &6.36  & 0.67  & 0.67  \\
(20, 2, 8) &  2.49  & 0.50  &  0.46 &  0 &  1.52 &  1.06 &  1.06 \\
(20, 2, 9) &   14.64  & 8.82  &  2.66 & 0  & 10.67  & 2.83  &  2.83 \\
(20, 2, 10) &  3.45  & 1.67 & 0.73  & 0.41  & 3.24   & 3.24  & 3.24  \\
\midrule
average & 6.01   &2.90   & 1.02  &  0.21 &4.23  & 1.69  &  1.12 \\
\bottomrule
(20, 3, 1) & 5.53  & 3.27  & 0  &  0 & 3.27  & 0  &  0 \\
(20, 3, 2) &  4.46  & 2.88  &  1.64 &  0 &  3.88 & 3.08  & 3.08  \\
(20, 3, 3) &  25.31  &  19.53 & 0  &  0 &  19.80 & 0   & 0  \\
(20, 3, 4) & 3.77   & 3.03  & 1.74  &  1.64 & 3.60 & 2.00  &  2.00 \\
(20, 3, 5) &  4.97  &  2.32 & 1.46  &  0 &  4.12 & 1.67  & 1.67  \\
(20, 3, 6) &  1.87  &0   & 0  & 0  & 0.28  &  0.12 &  0.12 \\
(20, 3, 7) & 3.72   & 0.88  & 0  & 0  &  3.15 &  1.46 & 1.46  \\
(20, 3, 8) &  3.31  &  1.66 & 0.41  &  0 & 1.96  & 0.83  & 0.83  \\
(20, 3, 9) &  1.29  & 0.87  & 0.44  &  0 & 1.08  &  0.62 & 0.16  \\
(20, 3, 10) & 6.11   & 4.42  & 1.58  &  1.09 &  5.53 & 1.90  &  1.90 \\
\midrule
average &  6.03  & 3.89  & 0.73  &  0.27 &  4.67 & 1.17  & 1.12  \\
\bottomrule
 \end{tabular}
\end{table}

\begin{table}[H]
\centering
% \caption{Optimality gap of the corresponding cutting-plane closure.}
% \vspace{0.3cm}
\begin{tabular}{  p{1.6cm} | p{0.8cm} | p{1cm} p{1cm} p{1cm} | p{1cm} p{1cm} p{1cm} }
\hline
(n, m, seed) & LP   & MCI & E-MCI & L-MCI &  CI & ECI & LCI \\ 
\midrule
(30, 1, 1) & 7.95   &  5.29 & 0.64  &0   &  6.05 &  0.76 &  0.76 \\
(30, 1, 2) &  2.48  &  0.91 & 0.20  &  0 & 1.38  &  0.25 & 0.25  \\
(30, 1, 3) &  24.75  & 23.35  & 0.70  & 0  &  24.24 & 0.70  &  0.70 \\
(30, 1, 4) &  2.84  & 1.37  & 1.27  & 0.81  &  1.48 & 1.48  &  1.48 \\
(30, 1, 5) & 1.66   & 0  & 0   & 0  & 0  &  0 & 0 \\
(30, 1, 6) &   9.45 &  6.74 & 1.53  & 0.77  & 7.60  &  1.84 &   1.84 \\
(30, 1, 7) &1.43    & 1.27  & 1.17  & 0.83  &  1.31 &  1.25 &  1.25 \\
(30, 1, 8) &3.81    &  2.84 & 0.42  & 0  & 3.17  & 0.47  &  0.22 \\
(30, 1, 9) &  6.20  & 4.66  & 0  & 0  &  5.23 &  0 &  0 \\
(30, 1, 10) &  2.83  &  1.34 & 0  &  0 & 1.91  &  0 &  0 \\
\midrule
average & 6.34  & 4.78 & 0.59  &0.24  &  5.24 & 0.68  & 0.65 \\
\bottomrule
(30, 2, 1) & 5.73  & 4.42  &  0.67 &0  &  4.78 & 0.69  & 0.69 \\
(30, 2, 2) & 5.14  &  3.25 & 3.25  & 0.30  &  4.05 & 3.78  & 3.78 \\
(30, 2, 3) & 9.94  &7.20   & 0  & 0 & 8.11  &  0 & 0  \\
(30, 2, 4) &  3.02 & 0.63  & 0  & 0 &  1.38 &0  &  0 \\
(30, 2, 5) & 2.98  & 2.46  &0   & 0  & 2.46  &  0 & 0 \\
(30, 2, 6) & 4.93  & 2.78  & 0  & 0 & 3.42  & 0  & 0 \\
(30, 2, 7) &  3.40 & 2.10  & 1.99  &0  &  2.80 & 2.80  & 0.74  \\
(30, 2, 8) & 5.86  & 5.06  & 1.35  & 0.20 &  5.45 & 1.35 & 1.35 \\
(30, 2, 9) &  3.04 &  0 &  0 &0  &  0.86 &  0.58 & 0  \\
(30, 2, 10) & 3.59  &  1.61 & 1.61  & 1.37 & 1.77  & 1.73  & 1.73 \\
\midrule
average & 4.76  & 2.95 & 0.89  &0.19  &  3.51 & 1.09  & 0.83 \\
\bottomrule
(30, 3, 1) & 3.76  & 2.94  &  1.19 & 0 & 3.14  & 1.32  & 0.71  \\
(30, 3, 2) & 4.13  &  3.26 & 0.37  & 0 &  3.68 & 0.38  & 0.19 \\
(30, 3, 3) &14.64   &  13.62 & 3.92  & 1.67 & 13.98   & 4.04  & 4.04  \\
(30, 3, 4) & 1.57  & 1.49  &  1.37 &0.89  & 1.55  & 1.41  & 1.41 \\
(30, 3, 5) & 1.09  &1.00 &  0.60 & 0.37 &  1.05 & 0.61  & 0.42 \\
(30, 3, 6) & 1.46  &   0& 0  & 0 & 0 &  0 & 0 \\
(30, 3, 7) &  0.99 &  0.88 & 0.11  & 0 & 0.92  & 0.11  & 0.11 \\
(30, 3, 8) & 1.23  &  0.55 & 0  & 0  &  0.80 & 0  & 0  \\
(30, 3, 9) &0.99  &  0.77 & 0.41 & 0.19 &   0.83 & 0.51  & 0.51 \\
(30, 3, 10) &  1.46 & 0.53  & 0.37  & 0 &  0.70 &  0.60 & 0.60 \\
\midrule
average & 3.13  & 2.50 & 0.83  &0.31  &  2.67 & 0.90  & 0.80 \\
\bottomrule
 \end{tabular}
\end{table}

\end{appendices}

%\section*{Appendices}
%\label{append: sec_prelim}
%
%
%\subsection{Proof of Proposition~\ref{prop: valid_ineq_for_closure} and Corollaries}
%\label{append: 1}

\end{document}